\newtheorem{theorem}{Theorem}[section]
\newtheorem{lemma}{Lemma}[section]
\newtheorem{proposition}{Proposition}[section]
\newtheorem{remark}{Remark}[section]
\def\dsp{\displaystyle}
\def\N{\mathbb{N}}
\def\LL{\mathcal{L}}
\def\FF{\mathcal{F}}
\def\PP{\mathcal{P}}
\def\dx{\delta x}
\def\dt{\delta t}
\def\ZT{$\mathcal{Z}-$}
\begin{document}

\title{Discrete transparent boundary conditions for the linearized Green-Naghdi system of equations}
\author{M. Kazakova\thanks{Corresponding author, Institut de Math\'ematiques de Toulouse; UMR5219, Universit\'e de Toulouse; CNRS, UPS IMT, F-31062 Toulouse, Cedex 9, France, e-mail: mkazakov@math.univ-toulouse.fr},
	P. Noble \thanks{Institut de Math\'ematiques de Toulouse; UMR5219, Universit\'e de Toulouse; CNRS, INSA, F-31077 Toulouse, France, email:  pascal.noble@math.univ-toulouse.fr}}
\date{\vspace{-5ex}}
\maketitle

\begin{center}
{\bf Abstract}
\end{center}

{\small In this paper, we introduce artificial boundary conditions for the linearized Green-Naghdi system of equations. The derivation of such continuous (respectively discrete) boundary conditions include the inversion of Laplace transform (respectively $\mathcal{Z}$-transform) and these boundary conditions are in turn non local in time. In the case of continuous boundary conditions, the inversion is done explicitly. We consider two spatial discretisations of the initial system either on a staggered grid or on a collocated grids, both of interest from the practical point of view. We use a Crank Nicolson time discretization. The proposed numerical scheme with the staggered grid permits explicit $\mathcal{Z}$-transform inversion whereas the collocated grid discretization do not. A stable numerical procedure is proposed for this latter inversion. We test numerically the accuracy of the described method with standard Gaussian initial data and wave packet initial data which are more convenient to explore the dispersive properties of the initial set of equations. We used our transparent boundary conditions to  solve numerically the problem of injecting propagating (planar) waves in a computational domain.}

\section{Introduction}
The motion of incompressible and irrotational fluids under the effect of  gravity is mathematically described by the free surface Euler equations. The complexity of this system led to the derivation of various asymptotic models for the water wave problem which are valid for some special physical regimes (see \cite{L} for more details). The well-known shallow water asymptotic is widely applied since the situations where the horizontal length scale is much greater than the vertical length scale are of particular interest to describe water waves in coastal areas. This regime leads, at first order, to the so-called shallow water equations (\cite{SV}) and this simple system being hyperbolic describes lots physical phenomena. Though in order to take into account the dispersive effects that are important in coastal oceanography, one has to consider second order model like the Green-Naghdi model (\cite{GN}).

The dimensional Green Naghdi equations read as
\begin{equation}\label{GN}
\begin{cases}
h_t  + div(h \vec{u}) = 0,\\
\dsp (h\vec{u})_t + div(h \vec{u} \otimes \vec{u} + pI) = 0, \quad p = \frac{gh^2}{2} + \frac{1}{3}  h^2 \ddot{h},
\end{cases}
\end{equation}
where $h$ is a fluid depth, $\vec{u}$ is a depth-averaged horizontal velocity, indexes means the derivation with respect to $t$, $x\in\mathbb{R}^2$, and dot is a material derivative $\dot{h} = h_t + \vec{u} \cdot \nabla h$. The consistency result with Euler equations can be found in \cite{L}. The model \eqref{GN} describes bidirectional propagation of dispersive water waves in the shallow water regime. It is physically more relevant for water wave problem than the unidirectional models like the Korteweg-de Vries equation or the Benjamin-Bona-Mahony equation which only describe small amplitude/unidirection water waves.  



The original system \eqref{GN} is derived and set on the whole space. Though, for practical applications, the area of study is restricted to a bounded domain and one has to prescribe suitable boundary conditions. We focus here on {\it artificial} boundary conditions in order to let waves go out of the computational domain without reflection or to prescribe an incoming wave on a part of the domain. From a mathematical point of view, the problem is set, in both cases, as follows: given a initial data compactly supported, one search for suitable boundary conditions so that the solution computed with these boundary conditions coincide on the bounded domain with the restriction of the solution set on the whole space. One possibility to solve this problem is to compute the solution on a sufficiently large domain with, say, periodic boundary conditions. Though it is cumbersome from a numerical point of view and requires the solution to remain compactly supported for all time. In particular it is untrue for large classes of dispersive equations like the Korteweg-de Vries equation or the Schr\" odinger equation. Moreover, the energy of the  exact solution for the problem set on the whole space is conserved whereas the energy of the restricted solution should decrease. For all these reasons is important to find the suitable boundary conditions, which absorb the energy at the boundaries and lead to a well-posed initial boundary value problem.


A review on different techniques for the construction such conditions for the linear and nonlinear Schr\"{o}dinger equations can be found in \cite{AABES}. For linear equations, the construction of the exact transparent boundary conditions is carried out by using Laplace transform in time and impose boundary conditions so as to obtain finite energy solutions. The inversion of those conditions yields boundary conditions that are in general non local in time. For nonlinear equations, pseudodifferential or paradifferential calculus is needed and provide transparent boundary conditions in the high frequency/short time regime \cite{AABES}. A numerical implementation of these boundary conditions is not straightforward: see e.g. \cite{ZWH} for a discretization of transparent boundary conditions for the Airy equation which requires an approximation of fractional derivatives. An alternative and fruitful approach consists in starting directly from a discretization of the equations set on the whole space and mimic the approach in the continuous case: the Laplace transform is replaced by the $\mathcal{Z}$-transform: see e.g. \cite{BEL-V} for an application of this strategy to the Airy equation. Though in the former paper, the inverse $\mathcal{Z}$-transform can not be carried out explicitly and the authors implement directly the explicit formula of the inverse tranform. This procedure is not stable from a numerical point of view. Recently the same idea provided the appropriate continuous and discrete boundary conditions for others dispersive equations for unidirectional wave propagation such as Benjamin-Bona-Mahoney (BBM) equation \cite{BMN} and mixed KDV-BBM equation \cite{BNS} where an alternative, stable method is introduce to compute the inverse transform. 




In this paper, we focus on a linearized version of \eqref{GN} model about the steady state $(h,u):=(H_0, 0)+(\eta, w)$ with $|(\eta,w)|\ll 1$. In the one dimensional case, this linearized system is written as:
\begin{equation}\label{KN-inisyst}
		\begin{cases}
			\eta_t + w_x = 0,\\
			w_t  + \eta_x - \varepsilon w_{txx} = 0,
		\end{cases}
		\quad x \in \mathbb{R}, t > 0
\end{equation}
where  $\varepsilon > 0$ is a dispersion parameter. We are interested in derivation of discrete transparent boundary conditions for \eqref{KN-inisyst}: they should provide suitable absorbing boundary conditions for the full system (\ref{GN}) for small amplitude waves. For that purpose, we focus on two spatial discretisations by working either on a collocated grid ($\eta,w$ are evaluated at the same points) or on a staggered grid. We use a Crank Nicolson scheme for time discretisation. We then follow a similar strategy than for the derivation of continuous transparent boundary conditions: we apply 
the $\mathcal{Z}-$transform and identify exponentially growing at $\pm\infty$. By restricting our attention to finite energy solutions, we impose conditions at the boundary points and then apply either explicitly or numerically the inverse $\mathcal{Z}-$transform.  These conditions are generically non local in time and can be cumbersome from a numerical point of view. There are various strategies to implement efficiently those (DTBC). Let us mention in particular ``sum of exponentials'' techniques: this approach is well documented. See e.g. \cite{Arn}, \cite{AES}, for quantum evolution equations and \cite{BEL-V} for an application in the case of the linearized (KdV) equation. 

The paper is organised as follow. In section \ref{EX}, we apply the technique found in \cite{ZWH}  to construct the exact boundary conditions for the linear system \eqref{KN-inisyst}. Moreover one can notice that the system \eqref{KN-inisyst} is equivalent to a linearized version of the Boussinesq equation:
\begin{equation*}
(w - \varepsilon w_{xx} )_{tt} - w_{xx} = 0, 	\quad \forall x \in \mathbb{R}, \forall t > 0,
\end{equation*}
and we focus on the construction boundary conditions for this equation too. It is useful when we construct the discrete conditions for Crank Nicolson time-discretization on a staggered grid: see section \ref{Disc1eq}. As it was already mentioned procedure of discrete boundary conditions construction involve the inversion of non-local in time operator $\mathcal{Z}$-transform, and the main reason to consider the scheme on the staggered grid is that this inversion can be done explicitly. The inversion of conditions for scheme on a collocated grid needs to be done numerically, and a more sophisticated procedure of inversion is presented in section \ref{Disc}. Finally, in section \ref{Num}, we present some numerical simulations to illustrate the accuracy of the proposed boundary conditions. We performed three types of simulation. The examples are inspired by works \cite{BNS}, \cite{BMN}. We show the different dispersive effects with a Gaussian and a wave packet initial data. We show also how to inject a travelling wave solution of \eqref{KN-inisyst} in the computational domain.

\section{Exact transparent boundary conditions}\label{EX}

In this section, we show how to derive transparent boundary conditions in the continuous case and prove the absorbing property of constructed conditions.

\subsection{Exact boundary conditions for linearised Green-Naghdi system}
We derive first the continuous boundary conditions for the system \eqref{KN-inisyst} of equations. We consider the initial value problem set on the whole space
\begin{equation*}
\begin{array}{r}
\eta_t + w_x = 0, 	\quad \forall x \in \mathbb{R}, \forall t > 0 \\
w_t  + \eta_x - \varepsilon w_{txx} = 0, 	\quad \forall x \in \mathbb{R}, \forall t > 0 \\
\eta(0,x) = \eta_0(x), \quad w(0,x) = w_0(x),\quad\forall x\in\mathbb{R} \\[2mm]
\lim\limits_{x\to\pm\infty} w(t,x) = \lim\limits_{x\to\pm\infty} \eta(t,x) = 0,
\end{array}
\end{equation*}
where the initial data $\eta_0$, $w_0$ are compactly supported functions in a finite interval $[x_\ell, x_r]$.  In order to construct the transparent boundary conditions, we  consider the solution of the problem set on the complementary of $[x_\ell, x_r]\subset\mathbb{R}$:
\begin{equation}\label{KN-compl}
\begin{array}{r}
\eta_t + w_x = 0, 	\quad \forall x \in \mathbb{R}\setminus [x_\ell, x_r], \forall t > 0 \\
w_t  + \eta_x - \varepsilon w_{txx} = 0, 	\quad \forall x \in \mathbb{R}\setminus [x_\ell, x_r], \forall t > 0 \\
\eta(0,x) = 0, \quad w(0,x) = 0,\quad \forall x\in\mathbb{R}\setminus[x_\ell, x_r] \\[2mm]

\lim\limits_{x\to\pm\infty} w(t,x) =\lim\limits_{x\to\pm\infty} \eta(t,x)  = 0,
\end{array}
\end{equation}

\noindent
This problem is homogeneous in time. We can apply the Laplace transform defined as
\begin{equation*}
\LL(f)(s;x) = \int\limits_{0}^{\infty}e^{-st} f(t;x)	dt
\end{equation*}
where $s$ is a parameter such as $\Re(s)>0$. (Hereafter $\Re$ denotes the real part), we obtain:
\begin{equation}\label{KN-Lapl}
\begin{array}{c}
s \LL(\eta)  + \partial_x\LL(w) = 0,\\
s \LL(w) + \partial_x\LL(\eta) -\varepsilon \partial_{xx} \LL(w) = 0. 
\end{array}
\end{equation}

\noindent
The solutions of the system \eqref{KN-Lapl} have the from
\begin{equation*}
\begin{array}{ll}
\left(
\begin{array}{c}
\LL(w)(s,x)\\
\LL(\eta)(s,x)
\end{array}
\right) = \alpha_+^r V^+ e^{\lambda^+ x} + \alpha_-^r V^- e^{\lambda^- x}, \quad \forall x>x_r, \\[5mm]
\left(
\begin{array}{c}
\LL(w)(s,x) \\
\LL(\eta)(s,x)
\end{array}
\right) = \alpha_+^\ell V^+ e^{\lambda^+ x} + \alpha_-^\ell V^- e^{\lambda^- x},\quad  \forall x<x_\ell
\end{array}
\end{equation*}
where $\alpha_+^{r,\ell}$, $\alpha_-^{r,\ell}$ are constant coefficients, $\lambda^+$, $\lambda^-$ are given by
\begin{equation*}\label{KN-lambs}
\dsp\lambda^+ = \sqrt[+]{\frac{s^2}{1 + \varepsilon s^2}}, 	\quad \lambda^- = -\sqrt[+]{\frac{s^2}{1 + \varepsilon s^2}},
\end{equation*}
and $V^+$, $V^-$ are the constant vectors:
 $$V^+ = (1, -\lambda^+/s)^T,\, V^- = (1, \lambda^+/s)^T.$$ 

The number $\sqrt[+]{z}$ corresponds to the principal square root of the complex number $z\in\mathbb{C}$. Note that the function $s \mapsto \sqrt{s^2/(1 + \varepsilon s^2)}$ maps $\Re(s) > 0$ to  $\mathbb{C} \setminus [0, 1/\varepsilon[$, therefore $\lambda^+$ has a strictly positive real part whereas $\lambda^-$ has a negative one. As a result, $x\mapsto e^{\lambda_+ x}$ increases exponentially fast as $x\to\infty$. In order to have a bounded solution $\LL(w)(s,x),\LL(\eta)(s,x)$ for all $x\geq x_r$, one must impose $\alpha_+^r=0$. Similarly, one has $\alpha_-^\ell=0$. The constant coefficients $\alpha_+^r$, $\alpha_-^\ell$ are written as:
\begin{equation*}\label{KN-htconds}
\begin{array}{c}
\dsp\alpha_+^r = \frac{1}{\sqrt[+]{1 + \varepsilon s^2}}\LL(w)(s,x_r) - \LL(\eta)(s,x_r) = 0, \\[5mm]	
\dsp\alpha_- ^\ell= \frac{1}{\sqrt[+]{1 + \varepsilon s^2}}\LL(w)(s,x_\ell) + \LL(\eta)(s,x_\ell) = 0.
\end{array}
\end{equation*}
\noindent
We then deduce a relation between  $\LL(\eta)$ and $\LL(w)$ at the boundary points $x_\ell, x_r$:
\begin{equation*}
\begin{array}{c}
\dsp\LL(w)(s,x_r)= \frac{1 + \varepsilon s^2}{\sqrt[+]{1 + \varepsilon s^2}} \LL(\eta)(s,x_r), 	\quad 
\dsp\LL(w)(s,x_\ell) = -\frac{1 + \varepsilon s^2}{\sqrt[+]{1 + \varepsilon s^2}} \LL(\eta)(s,x_\ell).
\end{array}
\end{equation*}
\noindent
The inversion of Laplace transform can be carried out explicitly and finally we get the following transparent boundary conditions:
\begin{equation}\label{KN-conds}
\begin{array}{c}
\dsp  w(t, x_r) = [\mathds{1} + \partial^2/\partial t]\frac{1}{\sqrt{\varepsilon}} \int_{0}^{t} \mathcal{J}_0( u/\sqrt{\varepsilon} ) \eta(t-u,x_r)du ,\\
\dsp w(t, x_\ell) = -[\mathds{1} + \partial^2/\partial t]\frac{1}{\sqrt{\varepsilon}} \int_{0}^{t} \mathcal{J}_0( u/\sqrt{\varepsilon} ) \eta(t-u,x_\ell)du,
\end{array}
\end{equation}
where $\mathcal{J}_0$  is the Bessel function of the first kind:
\begin{equation*}
\dsp \mathcal{J}_0(t) = \frac{1}{\pi}\int_{0}^{\pi} e^{i t \cos\theta}d \theta.
\end{equation*}

\noindent
Now we prove the following stability result.

\begin{proposition}\label{Absorb}
	The problem 
	
	\begin{equation}\label{KN-H1st}
	\begin{array}{r}
	\eta_t + w_x = 0, 	\quad \forall x \in ]x_\ell, x_r[, \forall t > 0 \\
	w_t  + \eta_x - \varepsilon w_{txx} = 0, 	\quad \forall x \in]x_\ell, x_r[, \forall t > 0 \\
	\eta(0,x) = \eta_0(x), \quad w(0,x) = w_0(x),\quad \forall x\in]x_\ell, x_r[ \\
	w(t,x_r) = \mathcal{L}^{-1}(\sqrt[+]{1+\varepsilon s^2})* \eta(t,x_r),\\
	w(t,x_\ell) = - \mathcal{L}^{-1}(\sqrt[+]{1+\varepsilon s^2})* \eta(t,x_\ell).
	\end{array}		
	\end{equation}
	
	is $L^\infty(\mathbb{R}^+, H^1(\mathbb{R}) \times L^2(\mathbb{R}))$ stable:  for all $t>0$ and for all smooth solution of \eqref{KN-H1st}, we have
	
	\begin{equation*}
	\dsp\int_{x_\ell}^{x_r}  \frac{\eta^2(t,x)}{2}  + \frac{w^2(t,x)}{2} + \frac{(\partial_x w(t,x))^2}{2} dx \leq \int_{x_\ell}^{x_r}  \frac{\eta_0^2(x)}{2}  + \frac{w_0^2(x)}{2} + \frac{(\partial_x w_0(t,x))^2}{2} dx.
	\end{equation*}		
\end{proposition}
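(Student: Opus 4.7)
The plan is to run a standard energy estimate on $(x_\ell, x_r)$ and to show that, thanks to the transparent boundary conditions, the time integral of the boundary flux is non-negative. First, I would multiply the first equation of \eqref{KN-H1st} by $\eta$ and the second by $w$, integrate over $(x_\ell, x_r)$, and sum. Integration by parts on the cross term gives $\int(\eta w_x + w\eta_x)\,dx = [w\eta]_{x_\ell}^{x_r}$, and a second integration by parts on the dispersive contribution gives $-\varepsilon\int w w_{txx}\,dx = -\varepsilon[w w_{tx}]_{x_\ell}^{x_r} + \frac{\varepsilon}{2}\frac{d}{dt}\int w_x^2\,dx$. The resulting identity is
\begin{equation*}
\frac{d}{dt}\int_{x_\ell}^{x_r} \frac{\eta^2 + w^2 + \varepsilon w_x^2}{2}\,dx + \bigl[w\eta - \varepsilon w w_{tx}\bigr]_{x_\ell}^{x_r} = 0,
\end{equation*}
so after integrating in time from $0$ to $T$ the proposition reduces to proving $\int_0^T \bigl[w\eta - \varepsilon w w_{tx}\bigr]_{x_\ell}^{x_r}\,dt \geq 0$.

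To exploit the transparent conditions, I would eliminate $w_{tx}$ at the endpoints via the first PDE: differentiating $\eta_t + w_x = 0$ in time and evaluating at $x_r$ yields $w_{tx}(t, x_r) = -\eta_{tt}(t, x_r)$, and similarly at $x_\ell$. Hence the contribution at $x_r$ becomes $w(t, x_r)(\eta + \varepsilon \eta_{tt})(t, x_r)$. Since the initial data are compactly supported inside $(x_\ell, x_r)$, one has $\eta(0, x_{\ell,r}) = \eta_t(0, x_{\ell,r}) = 0$, so the Laplace transform reads $\widehat{\eta + \varepsilon\eta_{tt}}(s, x_r) = (1+\varepsilon s^2)\hat\eta(s, x_r)$. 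Substituting the TBC $\hat w(s, x_r) = \sqrt{1+\varepsilon s^2}\,\hat\eta(s, x_r)$ rewrites this as $\hat K(s)\,\hat w(s, x_r)$ with $\hat K(s) := \sqrt{1 + \varepsilon s^2}$; at $x_\ell$ the opposite sign in the TBC combines with the minus from $[\cdot]_{x_\ell}^{x_r}$ to produce the same form. Setting $K := \mathcal L^{-1}\hat K$, the time-integrated boundary flux becomes
\begin{equation*}
\int_0^T w(t, x_r)(K \ast w)(t, x_r)\,dt + \int_0^T w(t, x_\ell)(K \ast w)(t, x_\ell)\,dt.
\end{equation*}

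The main step, and the principal obstacle, is to prove that each of these Volterra quadratic forms is non-negative. This is the assertion that $\hat K$ is positive-real: its branch cut lies on the imaginary axis outside $|s| < 1/\sqrt\varepsilon$, so $\hat K$ is holomorphic on $\{\Re s > 0\}$, and the principal square root sends $\mathbb{C}\setminus(-\infty, 0]$ into $\{\Re z > 0\}$, hence $\Re\hat K(s) > 0$ on the open right half-plane. Using causality of $K$ to write $\int_0^T w(K\ast w)\,dt = \int_\mathbb{R} w_T(K\ast w_T)\,dt$ with $w_T := w\,\mathds 1_{[0,T]}$, and applying Plancherel on the imaginary axis (with an $e^{-2\gamma t}$ regularisation to accommodate the $O(|s|)$ growth of $\hat K$, then letting $\gamma \downarrow 0$), one obtains
\begin{equation*}
\int_0^T w(K\ast w)\,dt = \frac{1}{2\pi}\int_{-\infty}^{\infty}\Re\hat K(i\omega)\,|\hat w_T(i\omega)|^2\,d\omega = \frac{1}{2\pi}\int_{|\omega|<1/\sqrt\varepsilon}\sqrt{1 - \varepsilon\omega^2}\,|\hat w_T(i\omega)|^2\,d\omega \geq 0,
\end{equation*}
which combined with the energy identity yields the claimed bound. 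As a safety net, an alternative I would keep ready is to extend $(\eta, w)$ to a solution on $\mathbb R$ by solving the exterior problem \eqref{KN-compl} with the prescribed Dirichlet values at $x_\ell, x_r$, the TBCs being precisely the compatibility conditions ensuring decay at $\pm\infty$, and then to invoke conservation of the whole-line energy together with compact support of the initial data to conclude $E(T) \leq E(0)$.
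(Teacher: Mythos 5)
Your proposal is correct and follows essentially the same route as the paper: the same energy identity, then rewriting the time-integrated boundary fluxes as causal quadratic forms via the transparent boundary conditions, and concluding by Plancherel from the sign of $\Re\sqrt[+]{1-\varepsilon\omega^2}$ on the imaginary axis. The only cosmetic difference is that you express the quadratic form in $w$ at the boundary, whereas the paper substitutes $W=\mathcal{L}^{-1}(\sqrt[+]{1+\varepsilon s^2})*N$ and works with $N=\eta$, obtaining the weight $(1+\varepsilon\xi^2)\,\Re\sqrt[+]{1-\varepsilon\xi^2}$ in place of your $\Re\sqrt[+]{1-\varepsilon\omega^2}$.
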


\begin{proof}
	We determine directly from the equations a time-derivation of the generalised kinetic energy as 
	\begin{equation*}
	\frac{d}{dt}\int_{x_\ell}^{x_r}   \frac{\eta^2(t,x)}{2}  + \frac{w^2(t,x)}{2} + \frac{(\partial_x w(t,x))^2}{2}  dx = -[\eta(t,x) w(t,x)]_{x_\ell}^{x_r} + \varepsilon [w(t,x) \partial_{tx}w(t,x)]_{x_\ell}^{x_r},
	\end{equation*}
	where the brackets denote a jump of the function between $x_\ell$ and $x_r$. By integrating with respect to the time variable on the interval $(0,t)$, one obtains:
	\begin{multline*}
	\int_{x_\ell}^{x_r} \frac{\eta^2(t,x)}{2}  + \frac{w^2(t,x)}{2} + \frac{(\partial_x w(t,x))^2}{2} dx -  \int_{x_\ell}^{x_r} \frac{\eta_0^2(x)}{2}  + \frac{w_0^2(x)}{2} +  \frac{(\partial_x w_0(t,x))^2}{2} dx\\
	= \int_{0}^{t} (- \eta w +  \varepsilon w\partial_{tx}w)(\cdot,x_r) dt + \int_{0}^{t} (- \eta w +  \varepsilon w\partial_{tx}w)(\cdot,x_\ell)dt := J_r - J_\ell
	\end{multline*}
	if $J_r \leq 0$ and  $J_\ell \geq 0$ then the inequality is satisfied. Let us first consider the right value $J_r$, we fix the $T>0$ and denote $N = \eta(t, x_r)\cdot 1_{[0,T]}$, $W = w(t, x_r)\cdot 1_{[0,T]}$. Note that from the first equation of \eqref{KN-H1st}, one deduces  that  $\partial_{xt} W = \partial_{tt} N$ and obtains
	{\setlength\arraycolsep{1pt}
	\begin{eqnarray*}
	J_r &:=&  \int_{0}^{t} (- \eta w +  \varepsilon w\partial_{tx}w)(\cdot,x_r) dt  =  \int_{-\infty}^{\infty} (- N W +  \varepsilon W N'') dt = \\
	&=&  \int_{-\infty}^{\infty} \left(- N \left(Op(\sqrt[+]{1+\varepsilon s^2}) * N\right) +  \varepsilon\, \left(Op(\sqrt[+]{1+\varepsilon s^2}) * N\right) N''\right) dt \\ 
	&=& \frac{1}{2\pi} \Re \int_{-\infty}^{\infty} \left( -\overline{\hat{N}} \sqrt[+]{1 - \varepsilon \xi^2} \hat{N} + \varepsilon \sqrt[+]{1 - \varepsilon \xi^2} \hat{N} (-\xi^2) \overline{\hat{N}}  \right) d\xi\\
	&=& -\frac{1}{2\pi} \Re \int_{-\infty}^{\infty} (1 + \varepsilon \xi^2)  \sqrt[+]{1 - \varepsilon \xi^2} \vert \hat{N} \vert^2 d\xi \leq 0.
	\end{eqnarray*}
	}
\noindent
Here $\hat N $ denotes the Fourier transform of $N$. When $\xi$ is smaller than $\varepsilon^{-1/2}$, the real part of the integral has the positive value, as the square root is real. On the other hand, if $\xi > \varepsilon^{-1/2}$, then the square root is pure imaginary and the real part is identically equal to zero. An estimate for $J_\ell$ can be done similarly:
	\begin{equation*}
	J_\ell = \frac{1}{2\pi} \Re \int_{-\infty}^{\infty} (1 + \varepsilon \xi^2)  \sqrt[+]{1 - \varepsilon \xi^2} \vert \hat{N} \vert^2 d\xi \geq 0.
	\end{equation*}
	\noindent
	This completes the proof of the proposition.
\end{proof}

\subsection{Exact boundary conditions for the linear Boussinesq equation}

The system \eqref{KN-inisyst} is equivalent to the linearized Boussinesq equation:
\begin{equation}\label{KN-oneeq}
	(w - \varepsilon w_{xx} )_{tt} - w_{xx} = 0, 	\quad \forall x \in \mathbb{R}, \forall t > 0.
\end{equation}
The continuous boundary conditions for equation \eqref{KN-oneeq} are required for the Crank Nicolson scheme on a staggered grid. We consider the initial value problem set on the whole space
\begin{equation*}\label{KN-wholesp1eq}
	\begin{array}{r}
	(w - \varepsilon w_{xx} )_{tt} - w_{xx} = 0, \quad \forall x \in \mathbb{R}, \forall t > 0 \\
	w(0,x) = w_0(x), \quad
	w_t(0,x) = v_0(x), \quad \forall x\in\mathbb{R} \\[2mm]
	
	\lim\limits_{x\to\infty} w(t,x) = \lim\limits_{x\to-\infty} w(t,x)  = 0,
	\end{array}
\end{equation*}
where the initial data $w_0$, $v_0$ are compactly supported in $[x_\ell, x_r]$. The problem set on the complementary of $[x_\ell, x_r]\subset\mathbb{R}$ reads as:
\begin{equation*}\label{KN-compl1eq}
\begin{array}{r}
	(w - \varepsilon w_{xx} )_{tt} - w_{xx} = 0, 	\quad \forall x \in \mathbb{R}, \forall t > 0\\[2mm]
	w(0,x) = 0,\quad w_t(0,x) = 0, \quad\forall x\in\mathbb{R}\setminus[x_\ell, x_r]\\[2mm]
\lim\limits_{x\to\infty} w(t,x) = \lim\limits_{x\to-\infty} w(t,x)  = 0.
\end{array}
\end{equation*}
\noindent
By applying the Laplace transform,  one finds:
\begin{equation*}
		s^2 \left(\LL(w)(s,x) - \varepsilon \partial_{xx} \LL(w)(s,x) \right)-  \partial_{xx}\LL(w)(s,x) = 0. 
\end{equation*}

We are searching again for the solution decreasing at infinity, so that give us one condition on the left boundary and one on the right one for the function $\LL(w)$:

\begin{equation*}
	\dsp\partial_x\LL(w)(s,x_r)  = -\frac{s}{\sqrt[+]{1 + \varepsilon s^2}} \LL(w)(s,x_r), \quad
	\dsp\partial_x\LL(w)(s,x_\ell)  = \frac{s}{\sqrt[+]{1 + \varepsilon s^2}} \LL(w)(s,x_\ell) .
\end{equation*}

 The inversion of Laplace transform can be found explicitly and finally we get
 \begin{equation}\label{KN-conds1eq}
 \begin{array}{c}
	\dsp  w_x(t, x_r) = -\partial/\partial t \int_{0}^{t} \mathcal{J}_0( u/\sqrt{\varepsilon} ) w(t-u,x_r)du ,\\[4mm]
	\dsp  w_x(t, x_\ell) =   \partial/\partial t \int_{0}^{t} \mathcal{J}_0( u/\sqrt{\varepsilon} ) w(t-u,x_\ell)du.
 \end{array}
 \end{equation}

For these boundary conditions, the absorbing property is fulfilled as well:
\begin{proposition}\label{Absorb1eq}
	Any smooth solution of the problem 
	
	\begin{equation}\label{KN-H1st1eq}
		\begin{array}{r}
			(w - \varepsilon w_{txx} )_{tt} - w_{xx} = 0, 	\quad \forall x \in [x_l,x_r], \forall t > 0 \\
			w(0,x) = w_0(x), \quad w_t(0,x) = v_0(x),\quad \forall x\in]x_\ell, x_r[\\
			\dsp  w_x(t, x_r) = -\partial_t \int_{0}^{t} \mathcal{J}_0( s/\sqrt{\varepsilon} ) w(t-s,x_r)ds ,\\[4mm]
			\dsp  w_x(t, x_\ell) =   \partial_t \int_{0}^{t} \mathcal{J}_0( s/\sqrt{\varepsilon} ) w(t-s,x_\ell)ds.
		\end{array}		
		\end{equation}
		
		satisfies for all $t>0$ the following estimate:	
		\begin{equation*}
			 \dsp\int_{x_\ell}^{x_r}  \left( \frac{(\partial_t w)^2}{2}   + (\partial_{x} w)^2 + \varepsilon (\partial_{tx} w)^2\right)(t,x) dx \leq \int_{x_\ell}^{x_r} \left( \frac{(\partial_t w_0)^2}{2}  + (\partial_{x} w_0)^2 + \varepsilon (\partial_{tx} w_0)^2\right)dx.
		\end{equation*}		
\end{proposition}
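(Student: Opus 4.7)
The plan is to mirror the energy argument used in the proof of Proposition \ref{Absorb}, adapted to the scalar equation \eqref{KN-oneeq}. First I multiply \eqref{KN-oneeq} by $w_t$ and integrate over $[x_\ell,x_r]$; two integrations by parts in $x$ (applied once to the term $\varepsilon w_t w_{xxtt}$ and once to $w_t w_{xx}$) convert the left-hand side into a pure time derivative plus boundary contributions, yielding
\[
\frac{d}{dt}\!\int_{x_\ell}^{x_r}\!\!\left(\frac{w_t^2}{2}+\frac{w_x^2}{2}+\frac{\varepsilon\, w_{tx}^2}{2}\right)dx \;=\; \Bigl[w_t w_x+\varepsilon\, w_t w_{xtt}\Bigr]_{x_\ell}^{x_r}.
\]
Integrating in time from $0$ to $t$ gives $E(t)-E(0)=J_r-J_\ell$, where $J_r$ and $J_\ell$ denote the boundary integrals $\int_0^t(w_\tau w_x+\varepsilon w_\tau w_{x\tau\tau})(\tau,\cdot)\,d\tau$ evaluated at $x_r$ and $x_\ell$ respectively; the claimed estimate reduces to $J_r\leq 0$ and $J_\ell\geq 0$.

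To handle $J_r$ I would work in Laplace variables. From the derivation preceding \eqref{KN-conds1eq}, $\LL(w_x)(s,x_r)=-s/\sqrt[+]{1+\varepsilon s^2}\,\LL(w)(s,x_r)$; two time derivatives multiply this symbol by $s^2$, so the combination $w_x+\varepsilon w_{xtt}$ evaluated at $x_r$ is the convolution operator whose Laplace symbol is
\[
-\frac{s(1+\varepsilon s^2)}{\sqrt[+]{1+\varepsilon s^2}}=-s\,\sqrt[+]{1+\varepsilon s^2}.
\]
Setting $N(\tau):=w(\tau,x_r)\,\mathbf{1}_{[0,T]}(\tau)$ with $T\geq t$ and passing to the Fourier side via $s=i\xi$, the time derivative on $N$ contributes a factor $i\xi$ and the boundary symbol a second factor $i\xi$, so that after Plancherel the integral takes the form
\[
J_r \;=\; -\frac{1}{2\pi}\,\Re\!\int_{-\infty}^{\infty}\!\xi^2\,\sqrt[+]{1-\varepsilon\xi^2}\,|\hat N(\xi)|^2\,d\xi \;\leq\; 0,
\]
because $\sqrt[+]{1-\varepsilon\xi^2}$ is real positive for $|\xi|<1/\sqrt{\varepsilon}$ and purely imaginary for $|\xi|>1/\sqrt{\varepsilon}$, so its real part is non-negative everywhere. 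The opposite sign in the left boundary condition of \eqref{KN-conds1eq} gives $J_\ell\geq 0$ by the same computation, and the stability estimate follows.

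The main obstacle is the Plancherel step: correctly bookkeeping the two factors of $i\xi$ (one from $\partial_\tau$ acting on $N$, the other from the symbol $-s\sqrt[+]{1+\varepsilon s^2}$) so that the Fourier integrand acquires the non-negative weight $\xi^2\,\Re\sqrt[+]{1-\varepsilon\xi^2}$. The use of the time cutoff $\mathbf{1}_{[0,T]}$ is legitimate exactly as in Proposition \ref{Absorb}: since the exterior problem starts from zero data at $\tau=0$, choosing $T$ slightly larger than $t$ renders the truncation harmless on the window where the boundary integrals live, and no spurious Dirac contributions enter the energy identity.
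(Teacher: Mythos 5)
The paper states Proposition \ref{Absorb1eq} without proof, and your argument is exactly the intended one: it transposes the energy method of Proposition \ref{Absorb} to the scalar equation, identifying $w_x+\varepsilon w_{xtt}$ at $x_r$ with the convolution operator of symbol $-s\sqrt[+]{1+\varepsilon s^2}$ and concluding from the non-negativity of $\Re\sqrt[+]{1-\varepsilon\xi^2}$, with the same (harmless, and equally informal) time-truncation step as in the paper's proof of Proposition \ref{Absorb}. Your energy density, with the factor $1/2$ on every term, is the one the computation actually produces; the coefficients in the stated estimate and the $w_{txx}$ appearing in \eqref{KN-H1st1eq} look like typos in the paper rather than defects of your proof.
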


 The discretization of the conditions \eqref{KN-conds} or the conditions \eqref{KN-conds1eq} is not trivial task. In the next section, we show how to obtain a consistent discretization of the boundary conditions which is compatible with the discrete numerical scheme used to carry out simulation of the model \eqref{KN-inisyst}.  The proofs of consistency with the continuous conditionsare carried out in the sections \ref{Disc1eq}, \ref{Disc}.

\section{Discrete transparent boundary conditions: Staggered grid}\label{Disc1eq}

 In this section we derive discrete artificial boundary conditions for the linearized Green-Naghdi system \eqref{KN-inisyst}. In order to build up these conditions, we follow  the strategy found in \cite{BEL-V} and \cite{BMN} and consider directly the problem on the fully discretized equations. In this section, we focus on spatial discretization on a staggered grid and Crank Nicolson time discretization. The numerical scheme is written as:

 \begin{equation}\label{KN-CrNicDis}
 \begin{array}{c}
	 \dsp\frac{\eta_{j+1/2}^{n+1} - \eta_{j+1/2}^{n}}{\delta t} + \frac{1}{2} \left( \frac{w_{j+1}^{n+1} - w_{j}^{n+1}}{ \delta x} +  \frac{w_{j+1}^{n} - w_{j}^{n}}{ \delta x} \right) = 0,\\[4mm]
	 \dsp\frac{w_j^{n+1} - w_j^{n}}{\delta t} - \frac{\varepsilon}{\dt} \left(  \frac{w_{j+1}^{n+1} - 2 w_{j}^{n+1} + w_{j-1}^{n+1}}{\delta x^2} -  \frac{w_{j+1}^{n} - 2  w_{j}^{n}+ w_{j-1}^{n}}{\delta x^2} \right) 
			  \\[4mm]
			 \dsp +\frac{1}{2} \left( \frac{\eta_{j+1/2}^{n+1} - \eta_{j-1/2}^{n+1}}{\delta x} +  \frac{\eta_{j+1/2}^{n} - \eta_{j-1/2}^{n}}{\delta x} \right) = 0,\\[4mm]
			 1 \leq j \leq J, n \in \mathbb{N}
 \end{array}
 \end{equation}
where $\delta t>0$, $\delta x>0$ are time and space step, respectively, and number of space cells $J\in\mathbb{N}$ is calculated as
\begin{equation*}
	J = \frac{x_r - x_\ell}{\delta x}.
\end{equation*}
A staggered grid is a setting for the spatial discretization, in which the unknown are not evaluated at the same space position. That is to say $w^n_j \approx w(n\delta t,x_\ell+j \delta x)$, $\eta^n_{j+1/2} \approx \eta(n\delta t,x_\ell+(j + 1/2) \delta x)$.


The procedure mimic what was done for the continuous case in the previous section. We first apply a discrete analogue of the Laplace transform which is referred to as $\mathcal{Z}-$transform. The definition reads as follows:
\begin{equation*}
	\widehat{u}(z) =  \mathcal{Z}\{ (u)_n \} (z)  =  \sum\limits_{n\geq 0} u_n z^{-n}, \quad \vert z \vert > R > 0,
\end{equation*}
 $z$ is the complex variable and $R$ is the radius of convergence of Laurent series. Hereafter the hat will denote the result of $\mathcal{Z}-$transform of the discrete sequences $\eta^n_{j+1/2}$, $w^n_j$ with respect to time index $n$.  
 
 The discrete system \eqref{KN-CrNicDis} reduces to the linear recurrence relations:
 \begin{equation} \label{KN-Ztr} 
 \begin{array}{c}
 \dsp \widehat{\eta}_{j+1/2} = -\frac{1}{s(z) \dx}(\widehat{w}_{j+1} - \widehat{w}_{j}),
 \\[4mm]
 \dsp - \frac{\varepsilon s(z)}{\dx^2} \widehat{w}_{j-1} + s(z)\left(1 + \frac{2 \varepsilon}{\dx^2}\right) \widehat{w}_{j} - \frac{\varepsilon s(z)}{\dx^2} \widehat{w}_{j+1} + \frac{\widehat{\eta}_{j+1/2} - \widehat{\eta}_{j-1/2}} {\dx} = 0,
 \end{array},
 \end{equation}
 where,
 \begin{equation}\label{KN-defs}
	 \dsp s(z) =  \frac{2}{\delta t} \frac{z - 1}{z + 1}.
 \end{equation}
 \noindent
As the function $z\mapsto s(z)$ has a singularity at $z = -1$, we assume $\vert z \vert > 1$, which in turn yields $\Re\left(s(z)\right) > 0$. 
Note that the initial values $w^0_j$, $\eta^0_j$ are supposed to be zero for all $j\leq 0$, $j \geq J+1$.\\
 
We can eliminate $\hat\eta_{j+1/2}$ from the system  \eqref{KN-Ztr}  so as to obtain a scalar recurrence relation: 
 \begin{equation}\label{KN-EqWDis}
 \dsp (1 + \varepsilon s^2(z)) \widehat{w}_{j-1} - 2\left(1+s(z)\left(\varepsilon + \frac{\dx^2}{2}\right)\right) \widehat{w}_{j} + (1 + \varepsilon s^2(z)) \widehat{w}_{j+1}  = 0,\quad
 1 \leq j \leq J, n \in \mathbb{N}
 \end{equation}
\noindent
 This linear recurrence has a general solution written in the form: 
 \begin{equation*}
	\widehat{w}_j = \alpha_+ r_+(z)^j + \alpha_- r_-(z)^j,\quad j\in\mathbb{Z}
 \end{equation*}
 where $r_{\pm}$ are the roots of characteristic polynomial $P$ associated with the recurrence:
  \begin{equation}\label{KN-poly1eq}
			P(r) = (1 + \varepsilon s^2(z)) r^2 - 2\left(1+s^2(z)\left(\varepsilon + \frac{\dx^2}{2}\right)\right) r + (1 + \varepsilon s^2(z)).
  \end{equation}
 The explicit formulae for the roots reads
 \begin{equation}\label{KN-roots}
 r_\pm(z) = 1 + \frac{ s^2(z)\dx^2}{2 (1 + \varepsilon s^2(z))} \pm \frac{s(z) \dx  \sqrt{\dx^2 + 4(1 + \varepsilon s^2(z))}}{2 (1 + \varepsilon s^2(z))}.
 \end{equation}
 We show now an important property of the roots \eqref{KN-roots}:
 \begin{proposition}\label{KN-PrSepar1eq}
 	The roots of characteristic polynomial \eqref{KN-poly1eq} associated with linear recurrence relation have the following separation property: for all $z\in\mathbb{C}$ such that $|z|>1$, one has 
 	\begin{equation*}
		\vert r_+(z) \vert > 1, \quad  \vert r_-(z) \vert < 1.
 	\end{equation*}
 \end{proposition}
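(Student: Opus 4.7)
The plan rests on three observations. First, the polynomial $P$ in \eqref{KN-poly1eq} is palindromic: its leading and constant coefficients are both equal to $1+\varepsilon s(z)^2$. Vieta's formulas then give $r_+(z)\,r_-(z) = 1$, so in particular $|r_+(z)|\,|r_-(z)| = 1$. Consequently, to prove the separation property it is enough to show that for every $z$ with $|z|>1$ neither root lies on the unit circle; the two moduli will then have to lie on opposite sides of $1$.

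To exclude unit-circle roots, I would suppose $r = e^{i\theta}$ solves $P(r)=0$, divide by $e^{i\theta}$, and use $e^{i\theta}+e^{-i\theta}=2\cos\theta$ to reduce the equation to
\begin{equation*}
\cos\theta \;=\; 1 + \frac{s(z)^2\,\dx^2/2}{1 + \varepsilon\,s(z)^2}.
\end{equation*}
Because the Cayley-type map \eqref{KN-defs} sends $\{|z|>1\}$ to $\{\Re(s)>0\}$, I would then write $s(z) = a+ib$ with $a>0$ and compute directly
\begin{equation*}
\Im\!\left(\frac{s^2}{1+\varepsilon s^2}\right) \;=\; \frac{2ab}{|1+\varepsilon s^2|^2}.
\end{equation*}
Two cases then finish the exclusion. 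If $b\neq 0$, this imaginary part is nonzero, the right-hand side of the $\cos\theta$-equation is not real, and this contradicts $\cos\theta\in\R$. If $b=0$, then $s$ is real and positive, hence $s^2/(1+\varepsilon s^2)$ is strictly positive, and the right-hand side strictly exceeds $1$: another contradiction.

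Having shown $|r_\pm(z)|\neq 1$, the product identity forces the two moduli to form a pair $\{\rho,\rho^{-1}\}$ with $\rho\neq 1$. To pin down which label corresponds to $\rho$, I would use that $\{|z|>1\}$ is connected, that the discriminant in \eqref{KN-roots} never vanishes there (its vanishing would require $s^2\in\R^-$, excluded by $\Re(s)>0$), and that $|r_\pm|$ are continuous functions that never take the value $1$; the labelling of the larger modulus is therefore locally, hence globally, constant on this connected set. A single test case, say $z\to+\infty$ (so $s\to 2/\dt>0$ and every quantity in \eqref{KN-roots} is real positive), gives $r_+>1>r_->0$, and this inequality persists throughout $\{|z|>1\}$.

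The conceptually nontrivial step, and the one I expect to be the main obstacle, is the imaginary-part identity above: it is the single calculation that rules out unit-circle roots and hence drives the whole separation property. It can be viewed as the discrete counterpart of the observation used after \eqref{KN-lambs}, namely that $\lambda^+$ has strictly positive real part for $\Re(s)>0$ in the continuous problem; here the same structural fact has to be recovered from the Crank--Nicolson/staggered discretisation through the finite-difference characteristic polynomial.
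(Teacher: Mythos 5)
Your proof is correct and follows essentially the same route as the paper's: both exclude unit-circle roots by showing that such a root would force $s(z)^2$ to be real and nonpositive, contradicting $\Re(s(z))>0$ for $|z|>1$, and both then combine the reciprocal-root identity $r_+(z)r_-(z)=1$ with a continuity/connectedness argument anchored at a single test value. The only cosmetic differences are that you argue via the imaginary part of $s^2/(1+\varepsilon s^2)$ where the paper solves the unit-circle equation directly for $s^2$, and you anchor the labelling at $z\to+\infty$ where the paper uses the limit $|s(z)|\to\infty$.
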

 
 \begin{proof}
First let us show that there is no root on the unit circle. We assume that there is a root $r = e^{i\phi}$ such that $P(r)=0$. This equation reads

\begin{equation*}
  (1 + \varepsilon s^2(z)) e^{2i\phi} - 2\left(1+s^2(z)\left(\varepsilon + \frac{\dx^2}{2}\right)\right) e^{i\phi} + (1 + \varepsilon s^2(z)) = 0
\end{equation*}
and one deduces that
 \begin{equation*}
 s^2(z) = -\frac{4 \sin^2\phi}{2\varepsilon(1-\cos(\phi)) + \delta x^2} \in\mathbb{R}^-,
 \end{equation*}
and therefore $\Re(s) = 0$, which is in contradiction with the assumption $\vert z \vert > 1$. Therefore, there is no root of $P$ on the unit circle.

The product of the roots is equal to one due to relation between the coefficients of $P$ and there are no roots with modulus one. Therefore there is necessarily one root with a modulus larger than one and the other one with modulus smaller than one. In the limit $\vert s(z) \vert \rightarrow \infty$ one has $\vert r_+(z) \vert > 1$ and $\vert r_-(z) \vert < 1$. By continuity of $z\mapsto|r_\pm(z)|$ on the domain $\{z\in\mathbb{C},\:|z|>1\}$, this remains true for all $|z|>1$. This completes the proof of the proposition.
\end{proof}

 
 The construction of the boundary conditions is then carried out just as in the continuous case. First note that the solution to \eqref{KN-EqWDis} reads
 $$
 \begin{array}{ll}
\displaystyle
\widehat w_j=\alpha_+^r r_+(z)^j+\alpha_-^r r_-(z)^j,\quad \forall j\geq J,\\[2mm]
\displaystyle
\widehat w_j=\alpha_+^\ell r_+(z)^j+\alpha_-^\ell r_-(z)^j,\quad \forall j\leq 1.
\end{array}
 $$
 \noindent
 We search for bounded solutions,  which means that $\alpha_-^\ell = 0$, and   $\alpha_+^r = 0$. 
%
%
 These conditions are equivalent to the boundary conditions:
 
 \begin{equation}\label{KN-cond}
		\widehat{w}_1 = r_+(z) \widehat{w}_0, \quad \widehat{w}_{J+1} = r_-(z) \widehat{w}_J.
 \end{equation}
 Here we have the conditions for the images in the \ZT domain. In order to apply the \ZT inverse transform, we present the conditions \eqref{KN-cond} in the following form (we have used the explicit formula for $s(z)$):
\begin{equation}\label{bclr}
\begin{array}{l}
	\dsp\widehat{w}_1 = \left(1 + \frac{ 2\dx^2 (z-1)^2}{\Lambda z^2 - 2 \mu z + \Lambda} + \frac{2\dx(z-1) \sqrt{\Gamma z^2 - 2 \nu z + \Gamma}}{\Lambda z^2 - 2 \mu z + \Lambda}\right) \widehat{w}_0, \\[4mm]
	\dsp\widehat{w}_{J+1} =  \left(1 + \frac{ 2\dx^2 (z-1)^2}{\Lambda z^2 - 2 \mu z + \Lambda} - \frac{2\dx(z-1)\sqrt{\Gamma z^2 - 2 \nu z + \Gamma}}{\Lambda z^2 - 2 \mu z + \Lambda}\right) \widehat{w}_J,
\end{array}
\end{equation}
 where $\Lambda = 4\varepsilon + \dt^2$, $\mu = 4\varepsilon - \dt^2$, $\Gamma = \Lambda + \dx^2$, $\nu = \mu + \dx^2$. The inversion of constructed conditions can be done explicitly and it is a {\em key aspect} in using the scheme on a staggered grid. In the next section we will show that such inversion is not possible for scheme with collocated grid, and an other strategy for inversion should be used.
 
We focus on the inversion of the left boundary condition, the treatment of the right one being similar. Let us first to mention a useful result  for the inversion of \eqref{bclr}, namely
\begin{lemma}
	\begin{equation*}
		\mathcal{Z}^{-1} \left( \frac{z}{\sqrt{ z^2 - 2 \nu z + 1}} \right) = \sum\limits_{n=0}^{\infty} \PP_n(\nu) z^{-n},
	\end{equation*}
	for all $\vert z \vert > max(z_1,z_2)$, where $z_1$, $z_2$ are the roots of $z^2 - 2 \nu z + 1$ and $\PP_n(\nu)$ is the $n-$th Legendre polynomial.
\end{lemma}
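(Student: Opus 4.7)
The plan is to recognize the right-hand side as the classical generating function of the Legendre polynomials evaluated at $t=1/z$, and then match it term by term with the definition of the $\mathcal{Z}$-transform. Recall the standard identity
\begin{equation*}
\frac{1}{\sqrt{1 - 2\nu t + t^2}} \;=\; \sum_{n=0}^{\infty} \mathcal{P}_n(\nu)\, t^n,
\end{equation*}
valid for $|t|<\min(|t_1|,|t_2|)$, where $t_1,t_2$ are the roots of $1 - 2\nu t + t^2$. This is the usual generating function of the Legendre polynomials and can be proved, for instance, by checking that the left-hand side satisfies the three-term recurrence that defines the $\mathcal{P}_n$.

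First I would perform the change of variable $t=1/z$. A direct computation gives
\begin{equation*}
\sqrt{1 - 2\nu t + t^2}\Big|_{t=1/z} \;=\; \sqrt{\tfrac{z^2 - 2\nu z + 1}{z^2}} \;=\; \frac{\sqrt{z^2 - 2\nu z + 1}}{z},
\end{equation*}
where the branch of the square root is chosen consistently with the principal branch used in the statement (for $|z|$ large enough both sides agree and we extend by analytic continuation). Substituting yields
\begin{equation*}
\frac{z}{\sqrt{z^2 - 2\nu z + 1}} \;=\; \sum_{n=0}^{\infty} \mathcal{P}_n(\nu)\, z^{-n}.
\end{equation*}

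Next I would justify the domain of convergence. The roots of $z^2 - 2\nu z + 1$ are the reciprocals of the roots $t_1,t_2$ of $1-2\nu t + t^2$, so the radius of convergence of the series in $t$ around $0$ translates into convergence of the series in $z^{-1}$ precisely for $|z|>\max(|z_1|,|z_2|)$. Finally, by the very definition of the $\mathcal{Z}$-transform,
\begin{equation*}
\widehat{u}(z) \;=\; \sum_{n\geq 0} u_n\, z^{-n},
\end{equation*}
the expansion obtained identifies the coefficients $u_n=\mathcal{P}_n(\nu)$, which is the claim.

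The only non-routine point is the careful handling of the square root's branch: one must verify that the principal branch used when writing $\sqrt{z^2 - 2\nu z + 1}$ in the original statement matches the branch implicit in the generating function after the substitution $t=1/z$. This is easily checked by evaluating both sides at a real $z\to +\infty$, where both reduce to $1$, and invoking analytic continuation in the connected domain $|z|>\max(|z_1|,|z_2|)$ (which avoids the branch points at $z_1,z_2$). With that verification, the identity and the convergence assertion follow.
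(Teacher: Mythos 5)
Your proof is correct: the paper states this lemma without proof, treating it as a known consequence of the Legendre generating function, and your substitution $t=1/z$ in $\bigl(1-2\nu t+t^2\bigr)^{-1/2}=\sum_{n\ge 0}\mathcal{P}_n(\nu)t^n$, together with the observation that the roots of $z^2-2\nu z+1$ are the reciprocals of those of $1-2\nu t+t^2$ (so the disc of convergence in $t$ becomes exactly $|z|>\max(|z_1|,|z_2|)$), is precisely the standard justification. The branch-of-square-root check at $z\to+\infty$ is the right way to pin down the sign, so nothing is missing.
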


In order to use this result, we write
\begin{equation*}
	\sqrt{\Gamma z^2 - 2 \nu z + \Gamma} = \sqrt{\Gamma} (z - 2 v + z^{-1}) \frac{z}{\sqrt{z^2 - 2 v z +1}},\quad v=\frac{\nu}{\Gamma}.
\end{equation*}
\noindent
By multiplying the left boundary condition by $\Lambda z^2 - 2 \mu z + \Lambda$ and by using the inverse shift property of \ZT transform, one finds
\begin{multline}\label{KN-left}
\Lambda w_1^{n+1} - (\Lambda + 2\dx^2 + 2 \dx \sqrt{\Gamma})w_0^{n+1} = 2(\mu w_1^{n} - (\mu + 2 \dx^2 + \dx \sqrt{\Gamma} (v + 1))w_0^{n}) -\\ - (\Lambda w_1^{n-1} - (\Lambda + 2\dx^2 + 2 \dx \sqrt{\Gamma})w_0^{n-1}) +
		 2 \dx\sqrt{\Gamma} \left( (\PP_2 - 2 v^2 + v)  w_0^{n-1} + \sum_{k =2}^{n} s_k  w_0^{n-k} \right).
\end{multline}
A similar calculation gives the boundary condition on the right:
\begin{multline}\label{KN-right}
\Lambda w_{J+1}^{n+1} - (\Lambda + 2\dx^2 - 2 \dx \sqrt{\Gamma})w_{J}^{n+1} = 2(\mu w_{J+1}^{n} - (\mu + 2 \dx^2 - \dx \sqrt{\Gamma} (v + 1))w_J^{n}) -\\ - (\Lambda w_{J+1}^{n-1} - (\Lambda + 2\dx^2 - 2 \dx \sqrt{\Gamma})w_J^{n-1}) - 2 \dx\sqrt{\Gamma} \left( (\PP_2 - 2 v^2 + v)  w_J^{n-1} + \sum_{k =2}^{n} s_k(v)  w_J^{n-k} \right)
\end{multline}
\noindent
where 
\begin{equation*}
	\forall k\in\N \quad  s_k(v) = \PP_{k+1}(v) -(2v + 1) \PP_k(v)  + (2v + 1)\PP_{k-1}(v)  - \PP_{k-2}(v),
\end{equation*}
and $\PP_{-1}=0$, $\PP_{-2}=0$. As a conclusion, the full scheme consists in boundary conditions (\ref{KN-left}) and (\ref{KN-right}) together with the interior scheme written as

 \begin{multline}\label{KN-Disc1eq}
 -a_+ w_{j+1}^{n+1}  + (1 + 2a_+) w_{j}^{n+1} - a_+ w_{j-1}^{n+1} = 2 (-a_- w_{j+1}^{n}  + (1 + 2a_-) w_{j}^{n} - a_- w_{j-1}^{n}  )\\
 - (-a_+ w_{j+1}^{n-1}  + (1 + 2a_+) w_{j}^{n-1} - a_+ w_{j-1}^{n-1}), \quad  1 \leq j \leq J, n\in\N,
 \end{multline}
 where
 \begin{equation*}
 a_- = \frac{\varepsilon - \dt/4}{\dx^2}, \quad a_+ = \frac{\varepsilon + \dt/4}{\dx^2}.
 \end{equation*}

The interior scheme \eqref{KN-Disc1eq} is second order accurate in time and in space. In what follows, we check the consistency of the boundary conditions \eqref{KN-left} and \eqref{KN-right}.
 
 \subsection{Consistency theorem}

 In order to provide a good approximation of the continuous solution of \eqref{KN-oneeq} by numerical solution of \eqref{KN-Disc1eq} with  \eqref{KN-left}, \eqref{KN-right}, one should prove a consistency result. In what follows, we show that \eqref{KN-left} and \eqref{KN-right} are second order accurate in time and space.
 
 \begin{theorem}\label{ConsistStag}
	 Let $w$ be a smooth solution \eqref{KN-oneeq} which satisfies the transparent boundary conditions \eqref{KN-conds1eq}. We define the \ZT transform of $w(\cdot,x)$ for all $x\in[x_\ell,x_r]$ by
	\begin{equation*} 
		\forall z \neq 0, \quad \widehat{w}(z,x) = \sum\limits_{n = 0}^{\infty} \frac{w(n\dt, x)}{z^{n}}.
	\end{equation*}
	Then for all compact $K\in\mathbb{C}^+=\{z\in\mathbb{C}, \:\Re(z)> 0\}$, for all $s \in K$, we have
	 \begin{equation*}
	 \begin{array}{ll}
	 	\hat{w}(e^{s\dt},x_\ell+\dx) - r_+(e^{s\dt})\hat{w}(e^{s\dt},x_\ell)  =  O(\dt^2 + \dx^2),\\[2mm]
		 \hat{w}(e^{s\dt},x_r) - r_-(e^{s\dt})\hat{w}(e^{s\dt}, x_r-\dx)  =  O(\dt^2 + \dx^2),
		 \end{array}
	 \end{equation*}
	 where $r_\pm(z)$ defined by \eqref{KN-roots}.
 \end{theorem}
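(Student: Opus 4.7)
The plan is to combine a Taylor expansion of $\hat w(e^{s\dt},x_\ell+\dx)$ in $\dx$ around $x_\ell$ (via the spatial smoothness of $w$) with an asymptotic expansion of the characteristic root $r_+(e^{s\dt})$ in both $\dx$ and $\dt$ extracted from the explicit formula \eqref{KN-roots}, and then to close the argument by transferring the continuous transparent boundary condition to the $\mathcal{Z}$-transform setting. First, I would factor $\sqrt{\dx^2+4(1+\varepsilon s(z)^2)}=2\sqrt{1+\varepsilon s(z)^2}\bigl(1+\dx^2/(8(1+\varepsilon s(z)^2))+O(\dx^4)\bigr)$ in \eqref{KN-roots} to obtain
\[
r_+(z)=1+\lambda(z)\dx+\tfrac{1}{2}\lambda(z)^2\dx^2+O(\dx^3),\qquad \lambda(z):=\frac{s(z)}{\sqrt{1+\varepsilon s(z)^2}}.
\]
Combined with the Crank--Nicolson consistency $s(e^{s\dt})=\tfrac{2}{\dt}\tanh(s\dt/2)=s+O(\dt^2)$, uniformly on the compact $K$, this yields
\[
r_+(e^{s\dt})=1+\lambda_+(s)\dx+\tfrac{1}{2}\lambda_+(s)^2\dx^2+O(\dx^3+\dt^2\dx),\qquad \lambda_+(s)=\frac{s}{\sqrt{1+\varepsilon s^2}}.
\]

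On the other side, since $w$ is smooth in $x$ and differentiation commutes with the $\mathcal{Z}$-transform sum for $s\in K$,
\[
\hat w(e^{s\dt},x_\ell+\dx)=\hat w(e^{s\dt},x_\ell)+\dx\,\partial_x\hat w(e^{s\dt},x_\ell)+\tfrac{\dx^2}{2}\,\partial_{xx}\hat w(e^{s\dt},x_\ell)+O(\dx^3).
\]
Subtracting, the difference reduces to $\dx\,A+\tfrac{\dx^2}{2}B+O(\dx^3+\dt^2\dx)$, where $A:=(\partial_x-\lambda_+(s))\hat w(e^{s\dt},x_\ell)$ and $B:=(\partial_{xx}-\lambda_+(s)^2)\hat w(e^{s\dt},x_\ell)$. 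If $A=O(\dt)$ and $B$ is bounded on $K$, the AM--GM inequality $\dx\dt\le(\dx^2+\dt^2)/2$ then delivers the claimed $O(\dx^2+\dt^2)$ estimate.

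To control $A$ and $B$, I would exploit the continuous boundary data. The TBC \eqref{KN-conds1eq} translates into the exact Laplace identity $\partial_x\LL w(s,x_\ell)=\lambda_+(s)\LL w(s,x_\ell)$, and the Boussinesq PDE at $x_\ell$ (in the no-source region, where $w_0,v_0$ vanish in a neighborhood of the boundary) gives $\partial_{xx}\LL w(s,x_\ell)=\lambda_+(s)^2\LL w(s,x_\ell)$. Using the Poisson summation identity
\[
\dt\,\hat u(e^{s\dt},x_\ell)=\sum_{k\in\Z}\LL u(s+2\pi ik/\dt,x_\ell)
\]
applied to $u\in\{w,\partial_xw,\partial_{xx}w\}$, the $k=0$ mode cancels by the exact Laplace identity, leaving
\[
\dt\,A=\sum_{k\ne 0}\bigl(\lambda_+(s+2\pi ik/\dt)-\lambda_+(s)\bigr)\,\LL w(s+2\pi ik/\dt,x_\ell),
\]
which is controlled by boundedness of $\lambda_+$ at infinity and rapid decay of $\LL w(\cdot,x_\ell)$ in $|s|$ (a consequence of time-regularity of $w$ and the vanishing of the initial data at $x_\ell$); the same mechanism bounds $B$. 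The right-boundary estimate follows by the symmetric argument with $r_-$ and $-\lambda_+$. The main obstacle is precisely this quantitative aliasing analysis: it is delicate in its reliance on the time-smoothness of $w$ and on sufficient vanishing of the initial data near the boundary, but once in place the comparison of the two Taylor expansions mechanically yields the $O(\dt^2+\dx^2)$ bound.
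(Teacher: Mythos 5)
Your argument is essentially the paper's proof: expand $r_+(e^{s\dt})=1+\lambda_+(s)\dx+\tfrac12\lambda_+(s)^2\dx^2+O(\dx^3+\dt^2\dx)$ using \eqref{KN-roots} and $s(e^{s\dt})=s+O(\dt^2)$, Taylor-expand $\hat w$ in $x$, and identify the $O(\dx)$ and $O(\dx^2)$ brackets with the Laplace-transformed boundary condition \eqref{KN-conds1eq} and the Laplace-transformed equation \eqref{KN-oneeq}, both of which vanish. The only real difference is that where you propose to justify the passage from the $\mathcal{Z}$-transform to the Laplace transform by a Poisson-summation/aliasing analysis, the paper simply invokes the estimate $\LL(f)(s)=\dt\,\hat f(e^{s\dt})+O(\dt^{k+2})$ (its relation \eqref{LApprox}, quoted from \cite{BMN}), so you are in effect reproving that auxiliary lemma rather than taking a different route.
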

 
 \begin{proof}

First of all, let us note that the \ZT transform, defined above, is an approximation of the Laplace transform. More precisely,  for all smooth functions $f(0)=f'(0)=\dots =f^{(k)}(0)= 0$ ($k\in\mathbb{N}$), and all $s\in\mathbb{C}^+$,  we have:
\begin{equation}\label{LApprox}
  \LL(f)(s) = \dt \hat{f}(e^{s\dt}) + O(\dt^{k+2}),
\end{equation}
where $s$ is a parameter of Laplace transform. See \cite{BMN} for a proof of this result. Recalling definition of the roots \eqref{KN-roots}, we have
  \begin{equation*}
  \begin{array}{c}
  		\dsp\widehat{w}(z,x_\ell+\dx) - r_+(z)\widehat{w}(z,x_\ell)  =\\[5mm] \dsp	 \widehat{w}(z,x_\ell+\dx) -\left(1 + \frac{ s^2(z)\dx^2}{2 (1 + \varepsilon s^2(z))} + \frac{s(z)\dx \sqrt{\dx^2 + 4(1 + \varepsilon s^2(z))}}{2 (1 + \varepsilon s^2(z))}\right)\widehat{w}(z,x_\ell) = \\[5mm]  \dsp \widehat{w}(z,x_\ell+\dx) -  \widehat{w}(z,x_\ell)  - \frac{s^2(z) \dx^2}{2(1+\varepsilon s^2(z))}\widehat{w}(z,x_\ell)  -   \frac{s(z)\dx}{\sqrt{1 + \varepsilon s^2(z)}} \sqrt{1 + \frac{s^2(z) \dx^2}{4(1 + \varepsilon s^2(z))}}\widehat{w}(z,x_\ell).		
  \end{array}
  \end{equation*}
Note that the function $s(z)$ defined in \eqref{KN-defs} with $z = e^{s\dt}$ is approximated as
\begin{equation*}
s(z) = s + O(\dt^2).
\end{equation*}
We then find
    \begin{equation*}
  \begin{array}{c}
 \dsp\widehat{w}(z,x_\ell+\dx) - r_+(z)\widehat{w}(z,x_\ell)  =\\
  \dsp \dx \left(\partial_x\widehat{w}(e^{s\dt},x_\ell) + \frac{\dx}{2} \partial_{xx}\widehat{w}(e^{s\dt},x_\ell)  - \frac{s^2 \dx}{2(1+\varepsilon s^2)}\widehat{w}(e^{s\dt},x_\ell)  -   \frac{s}{\sqrt{1 + \varepsilon s^2(z)}} \widehat{w}(e^{s\dt},x_\ell)\right) + O(\dx^2) \\[5mm] 
  = \dsp \frac{\dx}{\dt} \left(\dt\left( \partial_x \widehat{w}(e^{s\dt}, x_\ell) - \frac{s}{\sqrt{1 + \varepsilon s^2(z)}} \widehat{w}(e^{s\dt},x_\ell) \right)  +\dt \frac{\dx}{2} \left( \partial_{xx}\widehat{w}(e^{s\dt},x_\ell)  - \frac{s^2}{1+\varepsilon s^2}\widehat{w}(e^{s\dt},x_\ell)  \right)  \right) + O(\dx^2)
    \end{array}
  \end{equation*}

By applying relation \eqref{LApprox} to the last line, we find that the first expression between  the parentheses is the Laplace transform of the continuous boundary condition on the left and the second one is the Laplace transform of \eqref{KN-oneeq}:
    \begin{equation*}
\begin{array}{c}
	\dsp\widehat{w}(z,x_\ell+\dx) - r_+(z)\widehat{w}(z,x_\ell)  = \\[5mm] \dsp \dx \left(\partial_x \LL(w) - \frac{s}{\sqrt{1 + \varepsilon s^2}}\LL(w) \right)  
+\dsp\frac{\dx^2}{2} \left(\partial_{xx} \LL(w) - \frac{s^2}{1 + \varepsilon s^2}  \LL(w)\right) + O(\dx^2 + \dt^2)=O(\dx^2 + \dt^2).
    \end{array}
\end{equation*}
This completes the proof of consistency for the boundary on the left, the proof being similar for the boundary on the right. 
 \end{proof}
 
 In addition to consistency results, we show that the coefficients $s_k$ of boundary conditions show a stable behavior. It can be demonstrated numerically (see figure, \ref{coef}, a). Moreover the coefficients decrease as $n^{-3/2}$ just as for BBM equation \cite{BMN}, mixed BBM-KDV equation \cite{BNS} or  Schr\"{o}dinger equation \cite{ET}. The boundary conditions are then non sensitive to round off errors on the numerical solution.
 
 \begin{figure}[t]
 	\begin{center}
 		\begin{minipage}[h]{0.495\linewidth}
 			\center{ \includegraphics[width=1\linewidth]{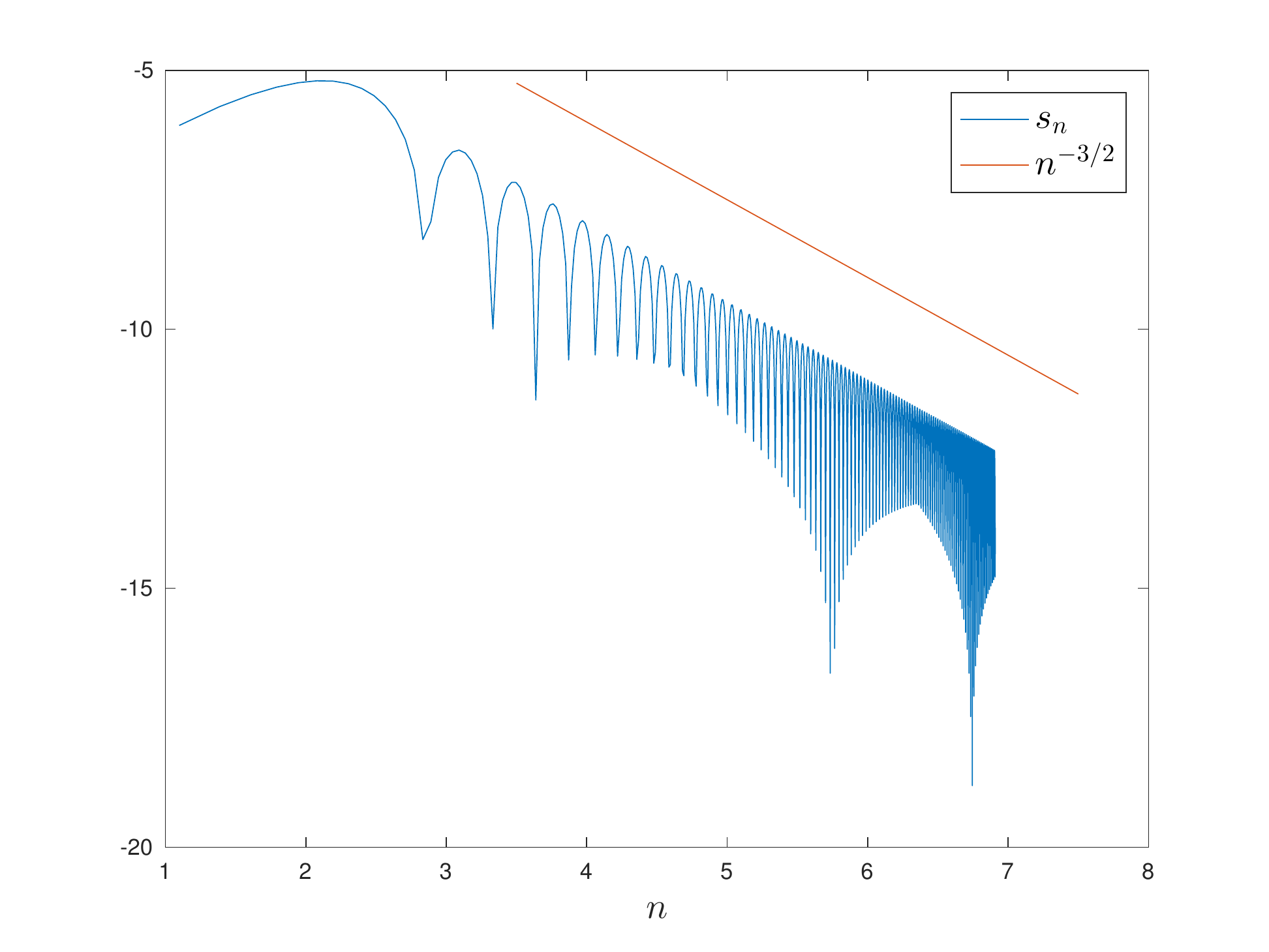} \\ {\it a}}
 		\end{minipage}
 		\hfill
 		\begin{minipage}[h]{0.495\linewidth}
 			\center{\includegraphics[width=1\linewidth]{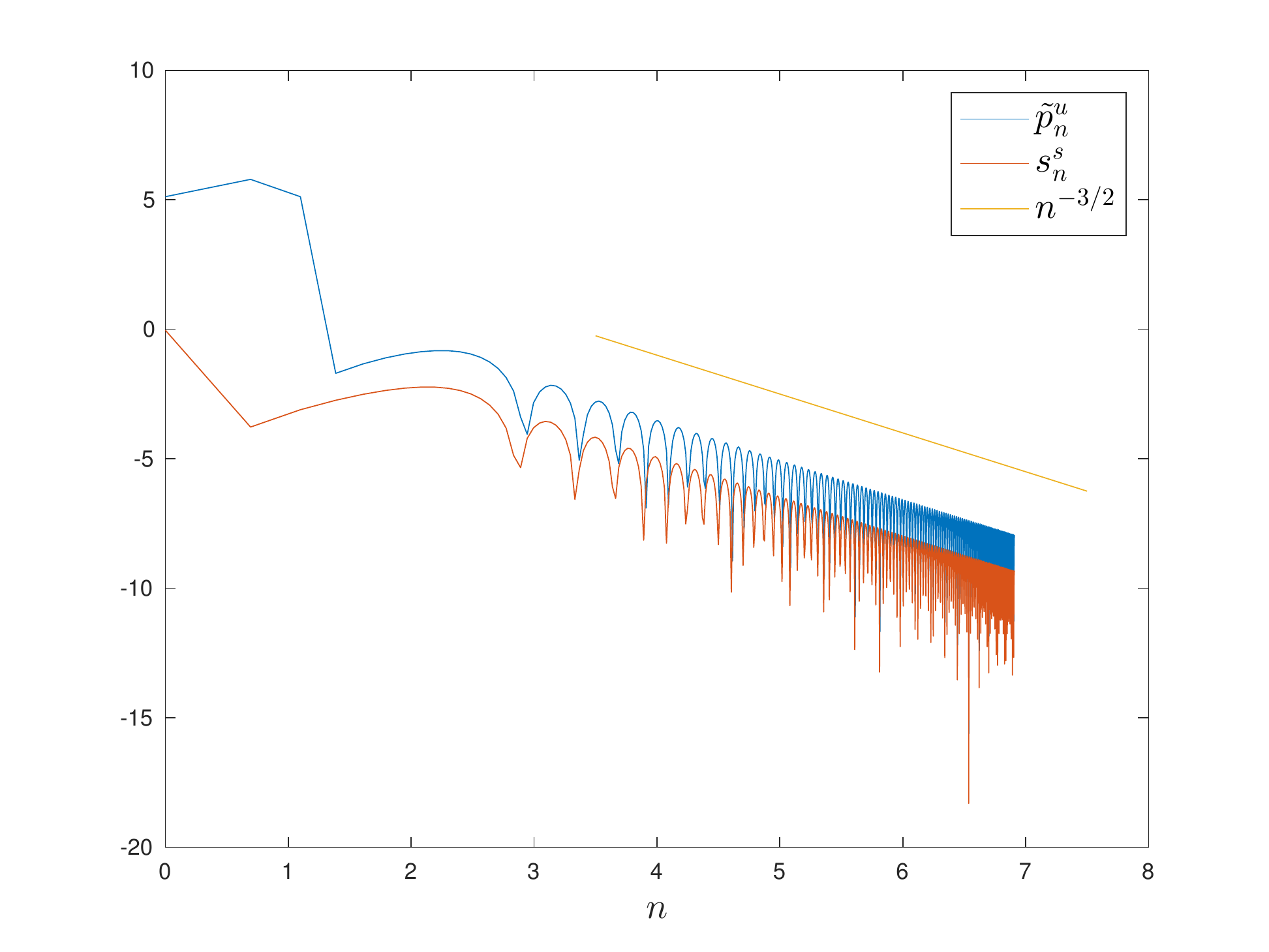} \\ {\it b}}\\
 		\end{minipage}
 	\end{center}
 	\caption{Coefficients of the discrete boundary conditions \eqref{KN-left}, \eqref{KN-right} (a) and \eqref{KN-BCleft}, \eqref{KN-BCright} (b) with $\delta x = 2^{-10}$, $\delta t = 10^{-2}$, $\varepsilon = 10^{-3}$}
 	\label{coef}
 \end{figure}
 
 \section{Discrete transparent boundary conditions: Collocated grid}\label{Disc}
 
 In this section, we consider transparent boundary conditions associated to a spatial discretization of \eqref{KN-inisyst} on collocated grids (functions $\eta$, $w$ are evaluated at the same points) . We keep a Crank Nicolson time discretization. The numerical scheme reads as follow:

{\setlength\arraycolsep{1pt}
 \begin{eqnarray}
 \dsp\frac{\eta_j^{n+1} - \eta_j^{n}}{\delta t} &+& \frac{1}{2} \left( \frac{w_{j+1}^{n+1} - w_{j-1}^{n+1}}{2 \delta x} +  \frac{w_{j+1}^{n} - w_{j-1}^{n}}{2 \delta x} \right) = 0,\nonumber\\
 \label{KN-CrNicDisCol}
 \dsp\frac{w_j^{n+1} - w_j^{n}}{\delta t}&-& \varepsilon \left(  \frac{w_{j+1}^{n+1} - 2 w_{j}^{n+1} + w_{j-1}^{n+1}}{\delta x^2} -  \frac{w_{j+1}^{n} - 2  w_{j}^{n}+ w_{j-1}^{n}}{\delta x^2} \right) \\
\dsp
& +& \frac{1}{2} \left( \frac{\eta_{j+1}^{n+1} - \eta_{j-1}^{n+1}}{2 \delta x} +  \frac{\eta_{j+1}^{n} - \eta_{j-1}^{n}}{2 \delta x} \right) = 0,\nonumber
 \end{eqnarray}}
\noindent
for all $1\leq j\leq J$ and $n\in\mathbb{N}$. By applying \ZT transform, the system \eqref{KN-CrNicDisCol} reduces to the second order linear recurrence system ($\vert z \vert > 1$):
\begin{equation}\label{ns-colloc}
\begin{array}{r}
\widehat{w}_{j+1} = \widehat{w}_{j-1} - s(z) \widehat{\eta}_j,
\\[2mm]
\dsp\widehat{\eta}_{j+1} = \widehat{\eta}_{j-1} + \frac{\varepsilon\,s(z)}{\delta x^2} \widehat{w}_{j+1} - s(z) (1 + \frac{2\varepsilon}{\delta x^2})\widehat{w}_j + \frac{\varepsilon\, s(z)}{\delta x^2}  \widehat{w}_{j-1},
\end{array}, \quad \dsp s(z) = \frac{2}{\delta t} \frac{z -1}{z+ 1};
\end{equation}
 \noindent
 We search for a basis of solutions of this recurrence system. We first write \eqref{ns-colloc} as a first order recurrence system
 \begin{equation*}
 \left(
 \begin{array}{c}
 \widehat{w}_{j+1}\\
 \widehat{\eta}_{j+1}\\
 \widehat{t}_{j+1}\\
 \widehat{r}_{j+1}
 \end{array}
 \right)= 
 \left(
 \begin{matrix}
 0 & -2 \delta x s(z) & 1 & 0\\
 - 2 \delta x s(z) (1 +2\varepsilon/\delta x^2) & -4 \varepsilon s^2(z) & 4\varepsilon s(z)/\delta x & 1\\
 1 & 0 & 0 & 0\\
 0 & 1 & 0 & 0
 \end{matrix}
 \right) \left(
 \begin{array}{c}
 \widehat{w}_{j}\\
 \widehat{\eta}_{j}\\
 \widehat{t}_{j}\\
 \widehat{v}_{j}
 \end{array}
 \right):=A(z)\left(
 \begin{array}{c}
 \widehat{w}_{j}\\
 \widehat{\eta}_{j}\\
 \widehat{t}_{j}\\
 \widehat{v}_{j}
 \end{array}
 \right),
 \end{equation*}
 where we have set $\hat{t}_{j}= \hat{w}_{j-1}$, $\hat{v}_{j}= \hat{\eta}_{j-1}$. The solutions of this recurrence system have the form
 \begin{equation*}\label{KN-sol}
 \left(\widehat{w}_j, \widehat{\eta}_j,  \widehat{t}_j,  \widehat{v}_j\right)^T = \sum\limits_{k=1}^{4} \alpha_k^r r_k^j V_k, \quad \forall j\geq J+1,\quad 
  \left(\widehat{w}_j, \widehat{\eta}_j,  \widehat{t}_j,  \widehat{v}_j\right)^T = \sum\limits_{k=1}^{4} \alpha_k^\ell r_k^j V_k, \quad \forall j\leq 0
 \end{equation*}
 where $r_k$, $k = 1, 2, 3, 4$ are the roots of characteristic polynomial $P$ associated to the matrix $A(z)$
 \begin{equation}\label{KN-poly}
 P(r) = r^4 + 4\varepsilon s^2(z) r^3 - \left(2 + 4 s^2(z) ( \delta x^2 + 2 \varepsilon)\right) r^2+  4\varepsilon s^2(z)  r + 1,
 \end{equation}
 whereas $V_k$ are the corresponding eigenvectors, and $\alpha_k^{r,\ell}$ are constant coefficients. 
 The expression for the roots of $P(r)$ are explicit but useless when we will have to carry out the inversion of the $\mathcal{Z}-$transform. Though, we can prove the following property.
 
 \begin{proposition}\label{KN-PrSepar}
 	The roots of the characteristic polynomial $P$ given by \eqref{KN-poly} have the following separation property: for all $z\in\mathbb{C}$ such that $|z|>1$, one has
 	\begin{equation*}
 	\vert r_1(z) \vert > 1, \quad \vert r_2(z) \vert > 1, \quad \vert r_3(z) \vert < 1, \quad \vert r_4(z) \vert < 1.
 	\end{equation*}
 	Here the roots are ordered as $\vert r_1(z) \vert \geq  \vert r_2(z) \vert \geq  \vert r_3(z) \vert \geq  \vert r_4(z) \vert$.
 \end{proposition}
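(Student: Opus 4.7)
The plan is to exploit the palindromic structure of $P$ and argue along the same lines as in Proposition~\ref{KN-PrSepar1eq}. First, I would observe that the coefficients of $P(r)$ are symmetric, i.e.\ $P(r) = r^4 P(1/r)$. Hence if $r$ is a root, so is $1/r$, and the four roots form two reciprocal pairs whose product equals $1$. Therefore the separation result reduces to two ingredients: (i) no root lies on the unit circle when $|z|>1$, together with (ii) a continuity argument fixing the count of roots strictly inside versus strictly outside the unit disk.

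For (i), I would plug $r = e^{i\phi}$ into $P(r)=0$, divide by $e^{2i\phi}$ to exploit the palindromic structure, and obtain
\[
2\cos(2\phi) - 2 + 8\varepsilon s^2(z) \cos\phi - 4 s^2(z)(\dx^2 + 2\varepsilon) = 0,
\]
which simplifies to
\[
s^2(z) = -\frac{\sin^2\phi}{\dx^2 + 2\varepsilon(1-\cos\phi)}.
\]
The denominator is strictly positive (since $\dx^2 > 0$ and $1-\cos\phi \geq 0$), so $s^2(z)$ is a non-positive real number. Thus $s(z)$ is either zero or purely imaginary, which contradicts $\Re(s(z))>0$, a consequence of $|z|>1$.

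For (ii), I would use the fact that $\{z\in\mathbb{C},\:|z|>1\}$ is connected and that $z \mapsto r_k(z)$ is continuous. Since no root crosses the unit circle on this set, the number of roots with $|r|>1$ is constant throughout. To pin down this number, I would take $s$ real and let $s \to +\infty$. The substitution $y=r+1/r$, which encodes the palindromic structure, transforms $P(r)=0$ into the quadratic
\[
y^2 + 4\varepsilon s^2 y - 4\bigl(1 + s^2(\dx^2 + 2\varepsilon)\bigr) = 0,
\]
whose two roots $y_\pm$ satisfy $y_+ \to 2 + \dx^2/\varepsilon > 2$ and $y_- \to -\infty$ as $s\to +\infty$. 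Since both $|y_\pm|>2$ in this limit, each of the quadratics $r^2 - y_\pm r + 1 = 0$ has two real reciprocal roots, one strictly inside and one strictly outside the unit disk. Consequently, exactly two of the four roots of $P$ satisfy $|r|>1$ for large real $s$, and by continuity this count persists on the whole connected region $\{|z|>1\}$.

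The main technical step is checking that $P$ has no root on the unit circle, but thanks to the palindromic structure this reduces to a single explicit identity, exactly as in the previous proposition. The rest is a standard continuity argument together with a routine asymptotic computation for $y_\pm$.
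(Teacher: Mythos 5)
Your proof is correct, and its overall skeleton is the same as the paper's: first exclude roots on the unit circle, then fix the two-inside/two-outside count by a continuity argument on the connected set $\{|z|>1\}$ combined with the limit $s(z)\to\infty$. For the first step your identity $s^2(z) = -\sin^2\phi/\bigl(\dx^2+2\varepsilon(1-\cos\phi)\bigr)\in\mathbb{R}^-$ is the same computation the paper performs (your constants are in fact the correct ones; the paper's displayed formula carries a spurious factor of $4\dx^2$, which is harmless since only the sign matters). Where you genuinely differ is in how you execute the counting step. The paper works root by root: it uses $\vert r_1r_2r_3r_4\vert=1$ to get at least one root on each side of the circle, then computes the asymptotics $r_1\sim-4\varepsilon s^2$ and $r_4\sim-1/(4\varepsilon s^2)$, and identifies $r_2,r_3$ as limits of the roots of $r^2-(2+\dx^2/\varepsilon)r+1=0$, whose positive discriminant separates them across the circle. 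You instead exploit the palindromic symmetry $P(r)=r^4P(1/r)$ globally: the substitution $y=r+1/r$ factors the quartic exactly as $(r^2-y_+r+1)(r^2-y_-r+1)$, and for large real $s$ both $y_\pm$ are real with $\vert y_\pm\vert>2$, so each factor contributes exactly one root strictly inside and one strictly outside the unit disk. Your route is somewhat cleaner, since it lets the reciprocal pairing (which the paper only uses through the product of the roots) do all the work and avoids expanding each root separately; the paper's route has the side benefit of producing the explicit asymptotic expansions of $r_1,\dots,r_4$ that are reused in remark \ref{KN-rootscoinc} and in the consistency proof. Both arguments share the same mild looseness in invoking continuity of the roots (strictly, one should say that the number of roots in the open unit disk is locally constant, e.g.\ by Rouch\'e's theorem), which does not affect correctness.
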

 
 \begin{proof}
 	First let us show that there is no root on the unit circle. Suppose that there is a root $r = e^{i\phi}$ of $P$, then the equation $P(r)=0$ reads
 	
 	\begin{equation*}
 	-\frac{e^{2 i\phi} }{\delta x^2} \left( (2\varepsilon + \delta x^2 - 2\varepsilon\cos\phi)s^2(z) + 4 \delta x^2 \sin^2\phi  \right) = 0
 	\end{equation*}
 	which in turn implies that
 	\begin{equation*}
 	s^2(z) = -\frac{4 \delta x^2 \sin^2\phi}{2\varepsilon + \delta x^2 - 2\varepsilon\cos\phi} \in\mathbb{R}^-,
 	\end{equation*}
\noindent
and thus $\Re(s(z)) = 0$ which is in contradiction with $\vert z \vert > 1$. 
 	
 There remains to locate the four roots with respect to unit circle. We order the roots as follows $|r_i(z)|\geq |r_{i+1}(z)|$ with $i=1,2,3$.	First, note that the constant term of $P$ is equal to $1$ which means that $\vert r_1(z) r_2(z) r_3(z) r_4(z) \vert = 1$. Necessarily, one has $|r_1(z)|$ and $|r_4(z)|<1$.  Let $s(z) \to \infty$: one has $r_1(z)  \sim - 4 \varepsilon s^2(z)$. The remaining roots $r_2,r_3,r_4$ are bounded and 	as $s(z)\to+\infty$ converge to the roots of polynomial which is defined as
 	\begin{equation*}
 	4\varepsilon r^3 - \left(4\delta x^2 + 8 \varepsilon\right) r^2 + r = 0, 
 	\end{equation*}
 So $r_4(z) \to 0$ and one can calculate directly $\dsp r_4(z) \sim -\frac{1}{4\varepsilon s^2(z)}$ as $s(z)\to+\infty$. The roots $r_2(z)$, $r_3(z)$ converge to the solution of
 	\begin{equation*}
 	\dsp r^2 - \left(2 + \frac{\delta x^2}{\varepsilon}\right) r + 1 = 0.
 	\end{equation*}
Since the discriminant $\dsp\Delta =  \left(1 + \frac{\delta x^2}{2 \varepsilon}\right)^2 - 1 > 0$, the roots are distinct and  one finds $\vert r_2(z)\vert > 1 > \vert r_3(z)\vert$ . This concludes the proof of the separation property.
 \end{proof}
 \begin{remark}\label{KN-rootscoinc}
 	The characteristic equation $P(r)=0$ can be written in the following form
 	\begin{equation*}
 	(r-1)^2 \left((r+1)^2 + 4 s^2(z) \varepsilon r \right) = 4 s^2(z) r^2 \delta x^2,
 	\end{equation*}
 	which can be rewritten as
 	\begin{equation*}
 	\dsp (r-1)^2  = \frac{4 s^2(z) r^2 \delta x^2}{(r+1)^2 + 4 s^2(z) \varepsilon r },
 	\end{equation*}
 	and so by applying the implicit function theorem, we can compute an expansion of  the roots $r_{2,3}$ bifurcating form $1$ at $\delta x=0$:
 	\begin{equation*}
		 r_{2}(z) = 1+\frac{s(z) \dx}{\sqrt{1 + \varepsilon s^2(z)}} + O(\dx^2),\quad r_{3}(z) = 1-\frac{s(z) \dx}{\sqrt{1 + \varepsilon s^2(z)}} + O(\dx^2).
 	\end{equation*}
A similar argument yields also an asymptotic expansion of $r_{1,4}$: 	
 	\begin{equation*}
	\begin{array}{ll}
	 	r_{1} = -\left(1 + 2 \varepsilon s^2(z)\right)- 2 \sqrt{\varepsilon s^2(z)(1 + \varepsilon s^2(z))} + O(\dx^2),\\[2mm]
		r_{4} = -\left(1 + 2 \varepsilon s^2(z)\right)+ 2 \sqrt{\varepsilon s^2(z)(1 + \varepsilon s^2(z))} + O(\dx^2).
		\end{array}
 	\end{equation*}
 	
 \end{remark}
 
Thanks to roots separation, we have a decomposition of solutions space into a stable subspace $E^s(z)={\rm span}(V_3; V_4)$ of solutions decreasing to $0$ as $j\to\infty$ and an unstable subspace $E^u(z)={\rm span}(V_1, V_2)$ of solutions decreasing to $0$ as $j\to-\infty$. According to  remark \ref{KN-rootscoinc}, we should pay close attention to the choice of the spatial step $\delta x$ in order to separate the roots $E^s(z)$ and $E^u(z)$. In order to 
obtain bounded solution, one must impose 
$$
\displaystyle
 \left(\widehat{w}_{J+1}, \widehat{\eta}_{J+1},  \widehat{t}_{J+1},  \widehat{v}_{J+1}\right)^T\in E^s(z),\quad  \left(\widehat{w}_1, \widehat{\eta}_1,  \widehat{t}_1,  \widehat{v}_1\right)^T\in E^u(z).
$$
which is equivalent to 
$$
\displaystyle
 \left(\widehat{w}_{J+1}, \widehat{\eta}_{J+1},  \widehat{w}_{J},  \widehat{\eta}_{J}\right)^T\in E^s(z),\quad  \left(\widehat{w}_1, \widehat{\eta}_1,  \widehat{w}_0,  \widehat{\eta}_0\right)^T\in E^u(z).
$$

%
%
 \noindent
 Let us start with the left boundary condition: the vector  $\left(\widehat{w}_0, \widehat{\eta}_0,  \widehat{w}_1,  \widehat{\eta}_1	\right)^T$ is given by
 \begin{equation}\label{KN-systleft}
 \left(
 \begin{array}{c}
 \widehat{w}_0\\
 \widehat{\eta}_0\\
 \widehat{w}_1\\
 \widehat{\eta}_1
 \end{array}
 \right) = 
 \left(
 \begin{matrix}
 1 & 1 & 1 & 1\\
 \frac{1-r_1^2}{2 \delta x r_1 s(z)} & \frac{1-r_2^2}{2 \delta x r_2 s(z)} & \frac{1-r_3^2}{2 \delta x r_3 s(z)} & \frac{1-r_4^2}{2 \delta x r_4 s(z)}\\
 r_1  & r_2 & r_3 & r_4 \\
 \frac{1-r_1^2}{2 \delta x s(z)} & \frac{1-r_2^2}{2 \delta x s(z)} & \frac{1-r_3^2}{2 \delta x s(z)} & \frac{1-r_4^2}{2 \delta x s(z)} 
 \end{matrix}
 \right) \left(
 \begin{array}{c}
 \alpha_1^\ell \\
 \alpha_2^\ell \\
 \alpha_3^\ell \\
 \alpha_4^\ell
 \end{array}
 \right).
 \end{equation}
 with $\alpha_3^\ell=\alpha_4^\ell=0$. Then from \eqref{KN-systleft} we have:
 
 \begin{equation*}
 \widehat{\eta}_0 =  \frac{1-r_1^2}{2 \delta x r_1 s(z)} \alpha_1^\ell +  \frac{1-r_2^2}{ 2 \delta x r_2 s(z)} \alpha_2^\ell, \quad  \widehat{\eta}_1 =  \frac{1-r_1^2}{2 \delta x s(z)} \alpha_1^\ell +  \frac{1-r_2^2}{2 \delta x s(z)} \alpha_2^\ell.
 \end{equation*}
 In order to determine $\alpha_1^\ell$, $\alpha_2^\ell$, we use the remaining two equations of \eqref{KN-systleft}. We set $r_1 + r_2 = S^u$ and $r_1 r_2 = P^u$: the left boundary conditions are given by 
 \begin{equation}\label{KN-leftBC}
 \begin{array}{r}
 (1 + P^u) \widehat{w}_1 =S^u \widehat{w}_0 - 2 \delta x P^u s(z) \widehat{\eta}_0,\\[2mm]
 2 \delta x\, s(z) \widehat{\eta}_1 + S^u \widehat{w}_1 = (1 + P^u) \widehat{w}_0.
 \end{array}
 \end{equation}
 
 The derivation of right boundary conditions are carried out with the same method. We set $r_3 + r _4 = S^s$ and $r_3 r_4 = P^s$. The right boundary conditions are given by:
 \begin{equation}\label{KN-rightBC}
 \begin{array}{r}
 (1 + P^s) \widehat{w}_{J+1} = S^s \widehat{w}_J - 2 \delta x\, P^s s(z) \widehat{\eta}_J,\\[2mm]
 2 \delta x\, s(z) \widehat{\eta}_{J+1} + S^s \widehat{w}_{J+1} = (1 + P^s) \widehat{w}_J.
 \end{array}
 \end{equation}
 
 The coefficients of the boundary conditions \eqref{KN-leftBC},\eqref{KN-rightBC} contain a singularity at $z=-1$, which in turn implies that the expansion coefficients for $S^s$, $P^s$, $S^u$, $P^u$ decrease slowly. In order to remove this singularity, we multiply the boundary conditions  \eqref{KN-leftBC},\eqref{KN-rightBC} by $(1+z^{-1})^q$, where the power $q$ depends on the order of a pole  $z=-1$ of the coefficients. For example, as we have seen the unstable root $r_1$ has the following asymptotic behaviour $r_1 \sim s^2(z)$ (see Proposition \ref{KN-PrSepar}). For stabilization, the coefficient $- 2\delta x P^u s(z)$ needs to be multiplied by $(1+z^{-1})^3$. The roots $r_3$, $r_4$ stay bounded as well as $P^s$ and $S^s$ and, therefore, we need to deal only with the singularity of $s(z)$, and $q=1$. We set $z^{-1} = x$ and obtain the following boundary conditions with coefficients decreasing faster which ensures stability with respect to round off errors: 
 \begin{equation*}\label{KN-BCmult}
 \begin{array}{c}
  \dsp((1+x)^3 + (1+x)\tilde{P}^u) \widehat{w}_1 =(1+x) \tilde{S}^u \widehat{w}_0 - \frac{4\delta x}{\delta t} (1-x) \tilde{P}^u \widehat{\eta}_0,\\[2mm]
  \dsp\frac{4\delta x}{\delta t} (1-x^2)  \widehat{\eta}_1 + \tilde{S}^u \widehat{w}_1 = ((1+x)^2  + \tilde{P}^u) \widehat{w}_0.\\[2mm]
  \dsp((1+x) + (1+x) P^s) \widehat{w}_{J+1} = (1+x)S^s \widehat{w}_J - \frac{4\delta x}{\delta t} (1-x) P^s \widehat{\eta}_J,\\[2mm]
  \dsp\frac{4\delta x}{\delta t} (1-x)  \widehat{\eta}_{J+1} + (1+x)S^s \widehat{w}_{J+1} = ((1+x)  + (1+x)P^s) \widehat{w}_J.
 \end{array}
 \end{equation*}
 
In order to invert $\mathcal{Z}-$transform, it is required to find the coefficients in the expansions of $S^s$, $P^s$, $\tilde{S}^u$, $\tilde{P}^u$ which are defined as
 \begin{equation*}
 \begin{array}{c}
 \dsp S^s(x)  = \sum\limits_{n>0} s^s_n x^n, \quad P^s(x)  = (1+x) \sum\limits_{n>0} p^s_n x^n,\\[5mm]
 \dsp\tilde{S}^u(x)  = (1+x)^2 S^u(x) = (1+x)^2 \sum\limits_{n>0} s^u_n x^n = \sum\limits_{n>0} \tilde{s}^u_n x^n, \\[5mm]
 \dsp\tilde{P}^u(x)  = (1+x)^2 P^u(x) =  (1+x)^2 \sum\limits_{n>0} p^u_n x^n = \sum\limits_{n>0} \tilde{p}^u_n x^n.
 \end{array}
 \end{equation*}
 \noindent
We follow the procedure proposed in \cite{BNS} and use the relation between the roots and coefficients of $P$. More precisely we have
 \begin{equation*}
 \begin{array}{r}
 S^s + S^u = -4\varepsilon s^2(x),\\[2mm]
 P^u + S^u S^s + P^s = - (2 + 4  s^2(x) (\delta x^2 + 2 \varepsilon)),\\[2mm]
 P^uS^s + P^s S^u = -4\varepsilon s^2(x),\\[2mm]
 P^u P^s = 1.
 \end{array}
 \end{equation*}
Then, the system satisfied by $S^s$, $P^s$, $\tilde{S}^u$, $\tilde{P}^u$ is given by
 \begin{equation*}
 \begin{array}{r}
 (1+x)^2 S^s + \tilde{S}^u = -16 \varepsilon(1-x)^2/\delta t^2,\\[2mm]
 \tilde{P}^u + \tilde{S}^u S^s + (1+x)^2 P^s = - (2(1+x)^2 + 16 (1-x)^2 (\delta x^2+ 2 \varepsilon)/\delta t ^2),\\[2mm]
 \tilde{P}^u S^s + P^s \tilde{S}^u =-16 \varepsilon (1-x)^2/\delta t^2,\\[2mm]
 \tilde{P}^u P^s = (1+x)^2.
 \end{array}
 \end{equation*}
By substituting the expansion of $S^s$, $P^s$, $\tilde{S}^u$, $\tilde{P}^u$ in this system, one finds
 \begin{equation}\label{KN-coefsyst}
 \begin{array}{r}
 \dsp s^s_n + \tilde{s}^u_n = -(2 s^s_{n-1} +  s^s_{n-2}) - 16 \varepsilon  \sigma^n/ \delta t^2,\\[2mm]
 \dsp\tilde{p}^u_n + s^s_0 \tilde{s}^u_n + \tilde{s}^u_0 s^s_n + p^s_n = -(2 p^s_{n-1} + p^s_{n-2}) - \sum\limits_{k=1}^{n-1} s^s_k \tilde{s}^u_{n-k} - \kappa^n,\\[2mm]
 \dsp s^s_0 \tilde{p}^u_n + \tilde{p}^u_0 s^s_n + p^s_0 \tilde{s}^u_n +  \tilde{s}^u_0 p^n_s = - \sum\limits_{k=1}^{n-1} s^s_k \tilde{p}^u_{n-k} - \sum\limits_{k=1}^{n-1} p^s_k \tilde{s}^u_{n-k} - 16 \varepsilon  \sigma^n/ \delta t^2,\\[2mm]
 \dsp p^s_0 \tilde{p}^u_n + \tilde{p}^u_0 p^s_n = - \sum\limits_{k=1}^{n-1} p^s_k \tilde{p}^u_{n-k} + \zeta_n.
 \end{array}
 \end{equation}
 where the sequence $\sigma^n$, $\kappa^n$ are given by formulas
 \begin{equation*}
 \begin{array}{c}
 \sigma^n = \delta_0 - 2 \delta_1 + \delta_2,\\
 \zeta^n = \delta_0 + 2 \delta_1 + \delta_2,\\
 \kappa^n = (2 + 16(\delta x^2 + 2 \varepsilon)/ \delta t^2) \delta_0 - (4 - 32(\delta x^2 + 2 \varepsilon)/ \delta t^2)\delta_1 - (2 + 16(\delta x^2 + 2 \varepsilon)/ \delta t^2) \delta_2,
 \end{array}
 \end{equation*}
 and $\delta_0 = (1, 0, \dots 0, \dots)$, $\delta_1 = (0, 1, 0, \dots 0, \dots)$,  $\delta_2 = (0, 0, 1, 0 \dots 0, \dots)$. The quantities $\tilde{s}^s_0$, $\tilde{s}^u_0$, $\tilde{p}^s_0$, $\tilde{p}^u_0$ are found directly as the roots of $P$ for $z^{-1} = x = 0$, and the resolution of \eqref{KN-coefsyst} is implemented numerically. Now there just remains to invert the boundary conditions \eqref{KN-BCmult}, one finds on the left
 
 \begin{equation}\label{KN-BCleft}
 \begin{array}{c}
  \dsp (1 + \tilde{p}^u_0) w_1^{n+1}  - \tilde{s}_0^u w_0^{n+1} + \frac{4\delta x}{\delta t} \tilde{p}_0^u \eta_0^{n+1} = -(3 + \tilde{p}^u_1 +\tilde{p}^u_0) w_1^{n} + (\tilde{s}^u_1 +  \tilde{s}^u_0)w_0^{n} - \dsp-  \frac{4\delta x}{\delta t}(\tilde{p}^u_1 - \tilde{p}^u_0)\eta_0^{n} - \\[3mm] \dsp 3 w_1^{n-1} - w_1^{n-2} - \sum\limits_{k=1}^{n} (\tilde{p}^u_{k+1} +\tilde{p}^u_{k} )w_1^{n-k} + \sum\limits_{k=1}^{n} (\tilde{s}^u_{k+1} +  \tilde{s}^u_{k} )w_0^{n-k} -  \frac{4\delta x}{\delta t}\sum\limits_{k=1}^{n} (\tilde{p}^u_{k+1} - \tilde{p}^u_{k})\eta_0^{n-k} ,\\[3mm]
 \dsp \frac{4\delta x}{\delta t}  \eta_1^{n+1} + \tilde{s}_0^u w_1^{n+1} - (1 + \tilde{p}^u_0) w_0^{n+1}    = \\[3mm] \dsp -\tilde{s}^u_1  w_1^{n} + (2 + \tilde{p}^u_1 ) w_0^{n}  + \frac{4\delta x}{\delta t}\eta_1^{n-1}  + w_0^{n-1} - \sum\limits_{k=1}^{n} \tilde{s}^u_{k+1} w_1^{n-k} + \sum\limits_{k=1}^{n} \tilde{p}^u_{k+1}  w_0^{n-k},
  \end{array}
 \end{equation}
and on the right:  \begin{equation}\label{KN-BCright}
\begin{array}{c}
\dsp (1 + p^s_0) w_{J+1}^{n+1}  - s_0^s w_J^{n+1} + \frac{4\delta x}{\delta t} p_0^s \eta_J^{n+1} = - w_{J+1}^{n} + (s^s_{1} + s^s_{0})w_J^{n} - (p^s_{1} + p^s_{0})w_{J+1}^{n} -  \frac{4\delta x}{\delta t}(p^s_{1} - p^s_{0})\eta_J^{n} \\[3mm] \dsp - \sum\limits_{k=1}^{n} (p^s_{k+1} + p^s_{k})w_{J+1}^{n-k} + \sum\limits_{k=1}^{n} (s^s_{k+1} + s^s_{k})w_J^{n-k} -  \frac{4\delta x}{\delta t}\sum\limits_{k=1}^{n} (p^s_{k+1} - p^s_{k})\eta_J^{n-k} ,\\[3mm]
 \dsp \frac{4\delta x}{\delta t}  \eta_{J+1}^{n+1} + s_0^s w_{J+1}^{n+1} -  (1 + p^s_0) w_J^{n+1}  = \frac{4\delta x}{\delta t}\eta_{J+1}^{n}  - (s_0^s + s_1^s) w_{J+1}^{n} + (1 + p^s_1 + p^s_0) w_J^{n} \\[3mm] + \dsp\sum\limits_{k=1}^{n} (p^s_{k+1} + p^s_{k}) w_J^{n-k} - \sum\limits_{k=1}^{n} (s^s_{k+1} + s^s_k)  w_{J+1}^{n-k}.
 \end{array}
 \end{equation}


\subsection{Consistency theorem}

We show that the discrete boundary conditions \eqref{KN-BCleft}, \eqref{KN-BCright} are consistent of  order $O(\dt^2 +\dx^2)$.
 
  \begin{theorem}
 	Let $\eta$, $w$ be a smooth solution \eqref{KN-inisyst} and \eqref{KN-conds}.  We define the \ZT transform of $f(\cdot,x)$ for all $x\in[x_\ell, x_r]$ by
 	\begin{equation*} 
 	\forall z \neq 0, \quad \widehat{f}(z,x) = \sum\limits_{n = 0}^{\infty} \frac{f(n\dt, x)}{z^{n}}.
 	\end{equation*}
 	\noindent
 	For all compact $K\subset\mathbb{C}^+$ and for all $s \in K$:
 	\begin{equation*}
 	\begin{array}{r}
 	(1 + r_1 r_2) \widehat{w}(e^{s\dt},x_\ell+\dx) - (r_1 + r_2) \widehat{w}(e^{s\dt},x_\ell) + 2 \delta x r_1 r_2 s(e^{s\dt}) \widehat{\eta}(e^{s\dt}, x_\ell) = O(\dt^2 + \dx^2),\\[2mm]
 	2 \delta x s(e^{s\dt}) \widehat{\eta}(e^{s\dt},x_\ell+\dx) + (r_1 + r_2) \widehat{w}(e^{s\dt},x_\ell+\dx) -  (1 + r_1 r_2) \widehat{w}(e^{s\dt},x_\ell) = O(\dt^2 + \dx^2),
 	\end{array}
 	\end{equation*}
 	
 	\begin{equation*}
 	\begin{array}{r}
 	(1 + r_3 r_4) \widehat{w}(e^{s\dt},x_r) - (r_3 + r_4) \widehat{w}(e^{s\dt}, x_r-\dx) + 2 \delta x r_3 r_4 s(e^{s\dt}) \widehat{\eta}(e^{s\dt},x_r-\dx) = O(\dt^2 + \dx^2),\\[2mm]
 	2 \delta x s(e^{s\dt}) \widehat{\eta}(e^{s\dt}, x_r) + (r_3 + r_4) \widehat{w}(e^{s\dt}, x_r) - (1 + r_3 r_4) \widehat{w}(e^{s\dt},x_r-\dx)  = O(\dt^2 + \dx^2).
 	\end{array}
 	\end{equation*}
	where $r_i$, $i=1..4$ are the roots of polynomial \eqref{KN-poly} such that $|r_1|\geq |r_2|>1>|r_3|\geq |r_4|$.
 \end{theorem}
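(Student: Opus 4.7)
The proof plan closely mirrors that of Theorem \ref{ConsistStag}: Taylor-expand the $\mathcal{Z}$-transformed unknowns in $\delta x$ around the relevant boundary point, substitute the asymptotic expansions of the characteristic roots supplied by Remark \ref{KN-rootscoinc}, and then invoke the Laplace/$\mathcal{Z}$ approximation \eqref{LApprox} together with $s(e^{s\delta t}) = s + O(\delta t^2)$ to reveal, at leading order in $\delta x$, the Laplace transforms of the continuous PDE \eqref{KN-inisyst} and the continuous transparent boundary condition \eqref{KN-conds}, both of which vanish identically on a smooth solution. The key structural observation is that only $r_2$ (resp.\ $r_3$) bifurcates from $1$ as $\delta x\to 0$ and so carries the continuous information, while $r_1$ (resp.\ $r_4$) remains bounded away from $1$ and plays the role of a parasitic mode; writing $A=\sqrt{\varepsilon s^2}$ and $B=\sqrt{1+\varepsilon s^2}$ so that $B^2-A^2=1$, this is what enables the required cancellations. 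I detail the first left-boundary relation; the other three follow by direct adaptation.

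From Remark \ref{KN-rootscoinc} one has $r_1=-(A+B)^2+O(\delta x^2)$ (using $1+2A^2+2AB=(A+B)^2$) and $r_2=1+s\delta x/B+O(\delta x^2)$, whence a short calculation yields
\begin{equation*}
(1-r_1)(1-r_2)=-2s\delta x(A+B)+O(\delta x^2),\quad 1+r_1r_2=-2A(A+B)+O(\delta x),\quad r_1r_2=-(A+B)^2+O(\delta x),
\end{equation*}
where I also use $1+A^2=B^2$. Taylor-expanding $\widehat w(z,x_\ell+\delta x)=\widehat w(z,x_\ell)+\delta x\,\partial_x\widehat w(z,x_\ell)+O(\delta x^2)$ and applying the identity $1+r_1r_2-(r_1+r_2)=(1-r_1)(1-r_2)$, the left-hand side of the first claim reduces to
\begin{equation*}
-2\delta x(A+B)\bigl[\,s\widehat w(z,x_\ell)+A\,\partial_x\widehat w(z,x_\ell)+s(A+B)\widehat\eta(z,x_\ell)\,\bigr]+O(\delta x^2+\delta t^2\delta x),
\end{equation*}
having also used $1+2A(A+B)=(A+B)^2$ and $s(z)=s+O(\delta t^2)$.

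Multiplying the bracket by $\delta t$ and invoking \eqref{LApprox} converts it into $s\mathcal L(w)+A\,\partial_x\mathcal L(w)+s(A+B)\mathcal L(\eta)$ at $x_\ell$, up to an error $O(\delta t^{k+2})$. The Laplace-transformed continuity equation from \eqref{KN-Lapl} supplies $\partial_x\mathcal L(w)=-s\mathcal L(\eta)$, which collapses the bracket to $s\bigl[\mathcal L(w)+B\mathcal L(\eta)\bigr]$, and this vanishes by the continuous left transparent boundary condition associated to \eqref{KN-conds}, namely $\mathcal L(w)(s,x_\ell)=-\sqrt{1+\varepsilon s^2}\,\mathcal L(\eta)(s,x_\ell)=-B\,\mathcal L(\eta)(s,x_\ell)$. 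This gives the claimed $O(\delta x^2+\delta t^2)$ bound. The second left equation, treated identically, produces after reduction the bracket $s\widehat\eta+s(A+B)\widehat w-A(A+B)\partial_x\widehat w$, which by the same PDE step together with the identity $1+A(A+B)=B(A+B)$ collapses to $s(A+B)\bigl[\mathcal L(w)+B\mathcal L(\eta)\bigr]$ and again vanishes.

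At the right boundary the roles of $r_1,r_2$ are played by $r_3,r_4$, whose expansions yield the symmetric identities $(1-r_3)(1-r_4)=2s\delta x(B-A)+O(\delta x^2)$, $1+r_3r_4=2A(B-A)+O(\delta x)$, and $r_3r_4=-(B-A)^2+O(\delta x)$; the Taylor expansion is now about $x_r$ with $\delta x\to -\delta x$; and the relevant continuous boundary condition carries the opposite sign, $\mathcal L(w)(s,x_r)=+B\,\mathcal L(\eta)(s,x_r)$, so the replacement $A+B\to B-A$ in the discrete factor is exactly compensated by the sign flip in the boundary condition, and the same cancellation goes through. The main obstacle I foresee is purely bookkeeping: ensuring that the expansions of Remark \ref{KN-rootscoinc} hold uniformly in $s\in K$ (which follows because the discriminant separating $r_2$ from $r_3$ stays bounded away from zero on $K$ for $\delta x$ sufficiently small, exactly as in the proof of Proposition \ref{KN-PrSepar}) and tracking cleanly how the two sources of $\delta t$-error combine, namely $s(e^{s\delta t})=s+O(\delta t^2)$ and the $O(\delta t^{k+2})$ remainder in \eqref{LApprox}, so that the final error appears as $O(\delta t^2+\delta x^2)$ rather than some weaker mixed bound.
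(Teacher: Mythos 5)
Your proposal is correct and follows essentially the same route as the paper's proof: Taylor expansion of $\widehat w,\widehat\eta$ in $\dx$ at the boundary point, substitution of the root expansions from Remark \ref{KN-rootscoinc}, the approximations $s(e^{s\dt})=s+O(\dt^2)$ and \eqref{LApprox}, and identification of the leading-order bracket as a combination of the Laplace-transformed continuity equation and the continuous transparent boundary condition, both of which vanish. Writing $r_1=-(A+B)^2+O(\dx^2)$ and factoring through $(1-r_1)(1-r_2)$ is just a more explicit rendering of the paper's generic split $\dx(R_1+1)(\partial_x\widehat w+s\widehat\eta)+\dx(R_1-1)(R_2\widehat w+s\widehat\eta)$, and your algebraic identities check out.
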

 
 
\begin{proof}
 	The proof of this theorem is similar to the proof of theorem \eqref{ConsistStag}. Though the 
explicit expressions for the roots $r_i$ are exceedingly lengthy and useless. Instead, we consider asymptotic expansions of the roots as $\dx, \dt\to 0$. Recall that $(r_i)_{i=1,\dots,4}$ expand as
 	 \begin{equation*}
 	 \begin{array}{l}
		\dsp r_{2} = 1 +\frac{s(z) \dx}{\sqrt[+]{1 + \varepsilon s^2(z)}} + O(\dx^2),\quad r_{3} = 1 - \frac{s(z) \dx}{\sqrt[+]{1 + \varepsilon s^2(z)}} + O(\dx^2),\\[5mm]
		r_{1} = -1 - 2 \varepsilon s(z)^2- 2 \sqrt[+]{\varepsilon s^2(z)(1 + \varepsilon s^2(z))} + O(\dx^2),\\[2mm]       r_{4} = -1 - 2 \varepsilon s(z)^2+ 2 \sqrt[+]{\varepsilon s^2(z)(1 + \varepsilon s^2(z))} + O(\dx^2).
		 \end{array}
	\end{equation*}
Let us denote $e_1(\dt,\dx)$ the consistency error associated to the first boundary condition:
$$
\dsp
e_1(\dt,\dx)=(1 + r_1 r_2) \widehat{w}(e^{s\dt},x_\ell+\dx) - (r_1 + r_2) \widehat{w}(e^{s\dt},x_\ell) + 2 \delta x r_1 r_2 s(e^{s\dt}) \widehat{\eta}(e^{s\dt}, x_\ell)
$$
and introduce $R_1, R_2$ such that
$$
\displaystyle
r_1=R_1+O(\dx^2),\qquad r_2=1+R_2\,\dx+O(\dx^2).
$$
\noindent
The consistency error $e_1$ reads:
{\setlength\arraycolsep{1pt}
\begin{eqnarray}
\dsp
e_1&=&(1+R_1+R_1R_2\dx)\widehat{w}(e^{s\dt},x_\ell+\dx)-(1+R_1+R_2\dx)\widehat{w}(e^{s\dt},x_\ell)\nonumber\\
\dsp
&&+2\dx s(e^{s\dt})R_1\widehat{\eta}(e^{s\dt},x_\ell)+O(\dx^2)\nonumber\\[2mm]
\dsp
&=&\dx\left((1+R_1)\partial_x\widehat{w}(e^{s\dt},x_\ell)+R_2(R_1-1)\widehat{w}(e^{s\dt},x_\ell)+2s(e^{s\dt})\widehat{\eta}(e^{s\dt},x_\ell)\right)+O(\dx^2)\nonumber\\[2mm]
\dsp
&=&\dx(R_1+1)\left(\partial_x\widehat{w}+s(e^{s\dt})\widehat{\eta}\right)(e^{s\dt},x_\ell)+\dx(R_1-1)\left(R_2\widehat{w}+s(e^{s\dt})\widehat{\eta}\right)(e^{s\dt},x_\ell)+O(\dx^2).\nonumber
\end{eqnarray}
}

\noindent
Recall that the function $s(z)$ with $z = e^{s\dt}$ is approximated by $s(e^{s\dt}) = s + O(\dt^2)$. By applying the relation \eqref{LApprox}  between \ZT transform and Laplace transform, we find that
$$
\dsp
e_1=\frac{\dx}{\dt}(R_1+1)\left(\partial_x\LL(w)+s \LL(\eta)\right)(s,x_\ell)+\frac{\dx}{\dt}(R_1-1)\left(R_2\LL(w)+s\LL(\eta)\right)(s, x_\ell)+O(\dx^2+\dt^2).
$$
\noindent
Since the smooth solution $\eta, w$ is a solution of \eqref{KN-inisyst}, one has $\partial_x\LL(w)+s \LL(\eta)=0$. Moreover $\eta, w$ satisfies \eqref{KN-conds} so that $\left(R_2\LL(w)+s\LL(\eta)\right)(s, x_\ell)=0$. As a result, one has $e_1=O(\dt^2+\dx^2)$. We proceed the same way for the other consistency errors. This conludes the proof of the proposition.
\end{proof}

We observed numerically that the coefficients involved in \eqref{KN-BCright} and \eqref{KN-BCleft} decrease as $n^{-3/2}$. The coefficients are plotted on the figure \ref{coef}, $b$. One finds similar decay properties for the linear Korteweg-de Vries equation \cite{BNS}, Benjamin-Bona-Mahony equation \cite{BMN} or the Schr\"odinger equation \cite{ET}.  The discrete boundary conditions \eqref{KN-BCright} and \eqref{KN-BCleft} are thus stable with respect to round off errors. 

 In the next section we will discuss the results of numerical simulations for equation \eqref{KN-Disc1eq} with the boundary conditions \eqref{KN-left}, \eqref{KN-right}  and for the system \eqref{KN-CrNicDis} with the  boundary conditions \eqref{KN-BCleft}, \eqref{KN-BCright}.
  
\section{Numerical results} \label{Num}

In this section we present a numerical validation of the discretized transparent boundary conditions
through various tests. First we validate the boundary conditions for a Gaussian initial data. Different dispersion properties are analysed for a wave packet as initial datum. This analysis is based on the dispersion relation corresponding to the linearised Green-Naghdi equation. All test are carried out for  both types of boundary conditions on a staggered and on a collocated grid. Finally, we show how to inject a (planar) wave into the computational domain. To validate the efficiency of the artificial boundary conditions we perform a numerical analysis of the  approximation error. The tests show second order of approximation with respect to time and space. Let us introduce first the numerical implementation of the  numerical methods considered in this paper.
 
\subsection{Numerical implementation}
\subsubsection{Staggered grid}
We present a numerical strategy to solve the problem on a staggered grid. The discretization \eqref{KN-CrNicDis} is equivalent to scheme \eqref{KN-Disc1eq} and the conditions \eqref{KN-left}, \eqref{KN-right} are written for the values of velocity $w_{0,J+1}^n$. It remains to reconstruct the values for free surface elevations $\eta_{j+1/2}^{n+1}$, $j\in(0,J)$. 
By taking into account the boundary condition and setting 
 \begin{equation*}
 \begin{array}{c}
 \Lambda_- = \Lambda + 2\dx^2 - 2\dx \sqrt{\Gamma}, \quad \Lambda_+ = \Lambda + 2\dx^2 + 2\dx \sqrt{\Gamma}\\[2mm]
 \mu_- = \mu + 2\dx^2 - \dx \sqrt{\Gamma}(v+1), \mu_+ = \mu + 2\dx^2 + \dx \sqrt{\Gamma}(v+1),
 \end{array}
 \end{equation*} 
 the full numerical step written as a one time step method reads
 \begin{equation*}
 M_{n+1} W^{n+1} = 2 M_{n} W^{n} - M_{n+1} W^{n-1} + V, n\in\N,
 \end{equation*}
 where $W^{n+1} = [w_0^{n+1},\, \dots\, , w_{J+1}^{n+1}]^\top$ is the unknown vector, and the matrices $M_n, M_{n+1}$ are defined as:
 \begin{equation*}
 M_{n+1} =  \begin{bmatrix}
 -\Lambda_+ &    \Lambda & &  &\\
 -a_+         & 1 + 2a_+ & -a_+      &        \\
 & \ddots  & \ddots    & \ddots   & 	   \\
 &             & -a_+       & 1 + 2a_+ & -a_+\\
 & &    &-\Lambda_- & \Lambda 
 \end{bmatrix}, 
 \quad 
 M_{n} =  \begin{bmatrix}
 -\mu_+ &    \mu & &  &\\
 -a_+         & 1 + 2a_+ & -a_+      &        \\
 & \ddots  & \ddots    & \ddots   & 	   \\
 &             & -a_+       & 1 + 2a_+ & -a_+\\
 &   &     &-\mu_- & \mu
 \end{bmatrix}.
 \end{equation*}
 The vector $V_n$ on the right hand side has only two non-zero components:
 \begin{equation*}
 V _n=   \begin{bmatrix}
 \dsp 2 \dx\sqrt{\Gamma} \left( (\PP_2 - 2 v^2 + v)  w_0^{n-1} + \sum_{k =2}^{n} s_k(v)  w_0^{n-k} \right) \\
 \vdots\\
 \dsp -2 \dx\sqrt{\Gamma} \left( (\PP_2 - 2 v^2 + v)  w_J^{n-1} + \sum_{k =2}^{n} s_k(v)  w_J^{n-k} \right)\\
 \end{bmatrix}.
 \end{equation*}
 The matrix $M_{n+1}$ is easily proved to be invertible (for $\dx$ small enough) and the solution vector at time $t_{n+1}$ is given by
 \begin{equation*}
 W^{n+1} = M_{n+1}^{-1}  (2 M_{n} W^{n} + V) - W^{n-1}, n\in\N,
 \end{equation*}
 so that the velocity components can be computed at each time step.\\

Once the velocity field is computed, there remains to reconstruct the values of free-surface elevation $\eta$. This can be done by solving the first equation of \eqref{KN-CrNicDis}. Since the velocity at iterations $n-$ and $(n+1)$ is known, one finds
 \begin{equation*}
 \dsp \eta_{j+1/2}^{n+1} = \eta_{j+1/2}^{n} - \frac{\dt}{2} \left( \frac{w_{j+1}^{n+1} - w_{j}^{n+1}}{ \delta x} +  \frac{w_{j+1}^{n} - w_{j}^{n}}{ \delta x} \right).
 \end{equation*}
 Note that this equation has no influence on the velocity calculations and should be solved simply for the correct description of the water wave problem.
 
 We need to set the initial values for velocity $W^{0}$ at $t=0$ and $W^{1}$ at $t=\dt$. In order to take into account the physics of the problem the initial conditions should be imposed for velocity $W^0$ and elevation $\eta^0$. To find a value for $W^{1}$ at $t=\dt$ we use the Taylor expansion in the vicinity of $t=0$:
 \begin{equation*}
 (w - \varepsilon w_{xx})\mid_{t = \dt} = (w - \varepsilon w_{xx})\bigr|_{t = 0} + \dt (w - \varepsilon w_{xx})_t \bigr|_{t = 0} + \frac{\dt^2}{2} (w - \varepsilon w_{xx})_{tt}\bigr|_{t = 0} + O(\dt^2),
 \end{equation*}
 using the continuous equations \eqref{KN-inisyst} one finds
 \begin{equation}\label{KN-ini}
 (w - \varepsilon w_{xx})\bigr|_{t = \dt} = \left(w - \left(\varepsilon -\frac{\dt^2}{2}\right) w_{xx}  \right)\bigr|_{t = 0}  + \dt\left(\eta_x)\right)\bigr|_{t = 0}.
 \end{equation}
 The discretization of \eqref{KN-ini} gives the linear system for the requested value. Note that the order of approximation for values $W(t = \dt)$ is the same than the numerical scheme itself. Though we
 have to take care of the choice of the time step with respect to values of $\varepsilon$ especially if $\varepsilon$ is  small  ($10^{-4}$, $10^{-5}$).

\subsubsection{Collocated grid}

 We rewrite in a matrix form the discrete equations \eqref{KN-CrNicDisCol} on a collocated grid coupled with the boundary conditions derived in section \ref{Disc}. We have:
 \begin{equation*}
  \begin{bmatrix}
  A_{n+1} &    B_{n+1} \\
 C_{n+1} & D_{n+1}
 \end{bmatrix} \left( \begin{array}{c}
 \eta\\
 w
 \end{array}\right)^{n+1} = \begin{bmatrix}
 A_{n} &    B_{n} \\
 C_{n} & D_{n}
 \end{bmatrix} \left( \begin{array}{c}
 \eta\\
 w
 \end{array}\right)^{n} + \vec{V}_n
 \end{equation*}
 where the matrices $M_{n+1}$, $M_n$ size of $2\times(J+2) \times 2\times(J+2)$ are block matrices. The blocks for index $(n+1)$ are defined as follows
\begin{equation*}
 	A_{n+1}  =  \begin{bmatrix}
 	\tilde{p}_0^u/c &  	 &       &  &\\
 	    0         & 1          &   0  &  &\\
 	& \ddots  & \ddots   & \ddots   & 	   \\
 	&             &  0 & 1 & 0\\
 	0&    	1/c   & &0 & 0
 	\end{bmatrix},  \quad
 	B_{n+1}  =  \begin{bmatrix}
 	-\tilde{s}_0^u &   1 + \tilde{p}_0^u &       &  &\\
 	-c         & 0         &   c   &  &\\
 	& \ddots  & \ddots   & \ddots   & 	   \\
 	&             &  -c        & 0         &  c\\
 	-(1+\tilde{p}_0^u)&   \tilde{s}_0^u &    & & 0
 	\end{bmatrix},  
\end{equation*}
\begin{equation*}
C_{n+1}  =  \begin{bmatrix}
0 &   0 &       &  p_0^s/c & 0\\
-c         & 0         &  c   &  &\\
& \ddots  & \ddots   & \ddots   & 	   \\
&             &  -c         & 0         &  c\\
0&    &    & 0 & 1/c
\end{bmatrix},   \quad
D_{n+1}  =  \begin{bmatrix}
0 &     & &  -s_0^s & 1+p_0^s\\
-a        & 1 + 2a & -a      &        \\
& \ddots  & \ddots    & \ddots   & 	   \\
&             & -a      & 1 + 2a & -a\\
&   &     & - (1+p_0^s) & s_0^s
\end{bmatrix},  
\end{equation*}
and for $n$:
\begin{equation*}
A_{n}  =  \begin{bmatrix}
-(\tilde{p}_1^u - \tilde{p}_0^u)/c &  	 &       &  &\\
0         & 1          &   0  &  &\\
& \ddots  & \ddots   & \ddots   & 	   \\
&             &  0 & 1 & 0\\
0&    	0  & &0 & 0
\end{bmatrix},   \quad
B_{n}  =  \begin{bmatrix}
\tilde{s}_0^u+	\tilde{s}_1^u &   -(3 + \tilde{p}_0^u+	\tilde{p}_1^u) &       &  &\\
c         & 0         &   -c   &  &\\
& \ddots  & \ddots   & \ddots   & 	   \\
&             &  c        & 0         &  -c\\
2+\tilde{p}_1^u&   -\tilde{s}_1^u &    & & 0
\end{bmatrix},  
\end{equation*}
\begin{equation*}
C_{n}  =  \begin{bmatrix}
0 &   0 &       &  -(p_1^s - p_0^s)/c & 0\\
c         & 0         &  -c   &  &\\
& \ddots  & \ddots   & \ddots   & 	   \\
&             &  c         & 0         &  -c\\
0&    &    & 0 & 1/c
\end{bmatrix},   \quad
D_{n}  =  \begin{bmatrix}
0 &     & &  (s_0^s + s_1^s) & -(1+p_0^s+p_1^s)\\
-a        & 1 + 2a & -a      &        \\
& \ddots  & \ddots    & \ddots   & 	   \\
&             & -a      & 1 + 2a & -a\\
&   &     & 1+p_0^s+p_1^s & -(s_0^s + s_1^s)
\end{bmatrix}.
\end{equation*}
We have denoted  $c = 4\dx/\dt$ and  $a=\varepsilon/\dx^2$. It follows from the form of the boundary conditions that the vector $\vec{V}_n$ on the right hand side contains the previous time-iteration values of the functions $\eta_j^n$, $w_j^n$:
 \begin{equation*} 
 \dsp V(0) = -3w_1^{n-1} - w_1^{n-2} - \sum\limits_{k = 1}^{n} (\tilde{p}_{k+1}^u + \tilde{p}_{k}^u)) w_1^{n-k} + \sum\limits_{k = 1}^{n} (\tilde{s}_{k+1}^u + \tilde{s}_{k}^u)) w_0^{n-k}  - \frac{4\dx}{\dt} \sum\limits_{k = 1}^{n} (\tilde{p}_{k+1}^u - \tilde{p}_{k}^u) \eta_0^{n-k},
 \end{equation*}
 \begin{equation*} 
 V(J+1) =  \frac{4\dx}{\dt} \eta_1^{n-1} -  \sum\limits_{k = 1}^{n} \tilde{s}_{k+1}^u w_1^{n-k}  + w_0^{n-1} + \sum\limits_{k = 1}^{n} \tilde{p}_{k+1}^u w_0^{n-k} 
 \end{equation*}
 \begin{equation*} 
 V(J+2) = \frac{4\dx}{\dt}  \sum\limits_{k = 1}^{n}(p^s_{k+1} - p^s_k) \eta_J^{n-k} -  \sum\limits_{k = 1}^{n}(p^s_{k+1} + p^s_k) w_{J+1}^{n-k} + \sum\limits_{k = 1}^{n}(s^s_{k+1} + s^s_k) w_{J}^{n-k} ,
 \end{equation*}
 \begin{equation*} 
 V(2(J+2)) = -\sum\limits_{k = 1}^{n}(s^s_{k+1} + s^s_k) w_{J+1}^{n-k}  + \sum\limits_{k = 1}^{n}(p^s_{k+1} + p^s_k) w_{J}^{n-k} ,
 \end{equation*}
 \begin{equation*} 
 V(j) = 0, \quad j = 1\,..\,J,(J+3)\,..\,2(J+2).
 \end{equation*}
 
  \begin{figure}[t]
 	\begin{center}
 		\begin{minipage}[h]{0.49\linewidth}
 			\center{ \includegraphics[width=1\linewidth]{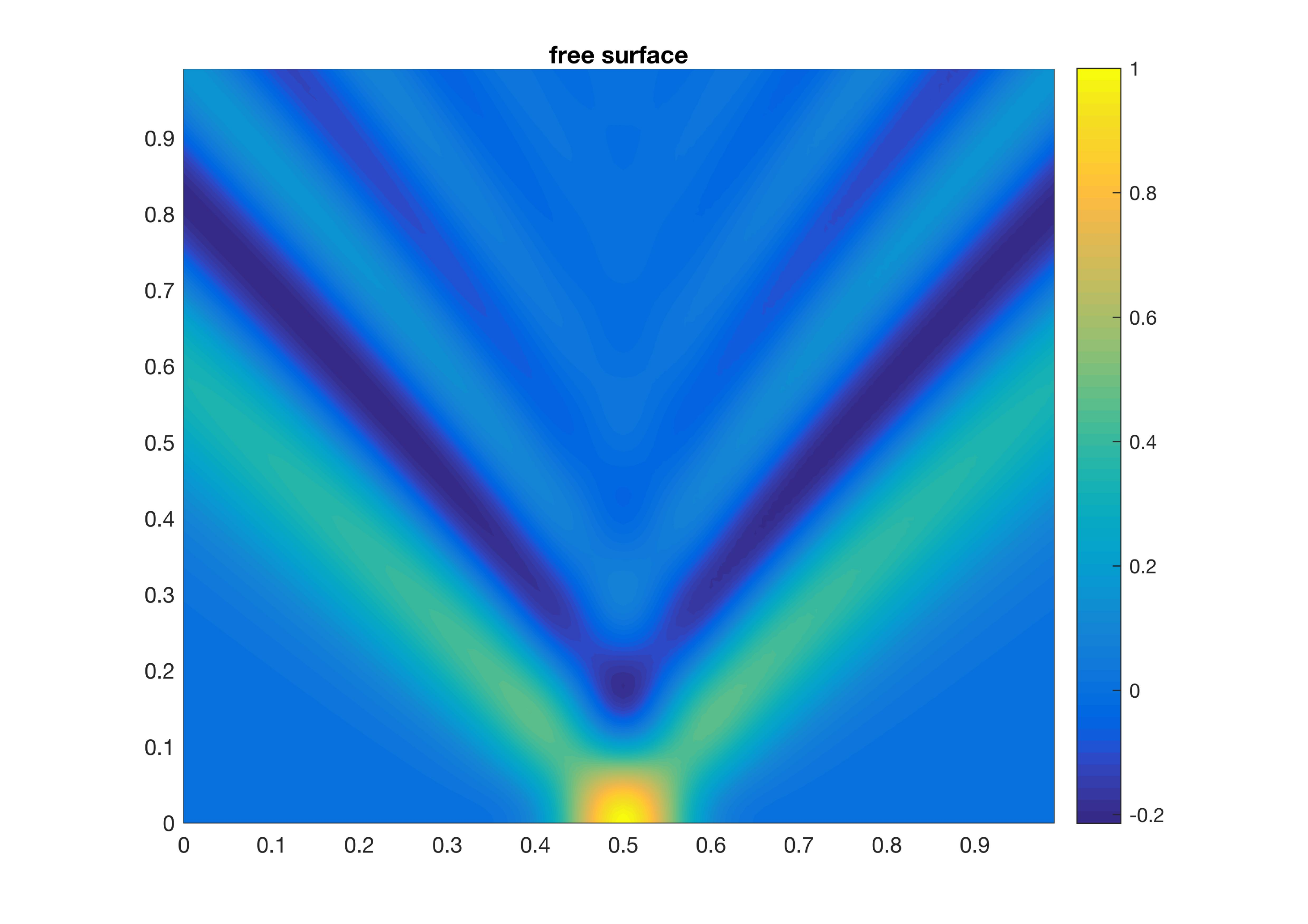}}
 		\end{minipage}
 		\hfill
 		\begin{minipage}[h]{0.49\linewidth}
 			\center{\includegraphics[width=1\linewidth]{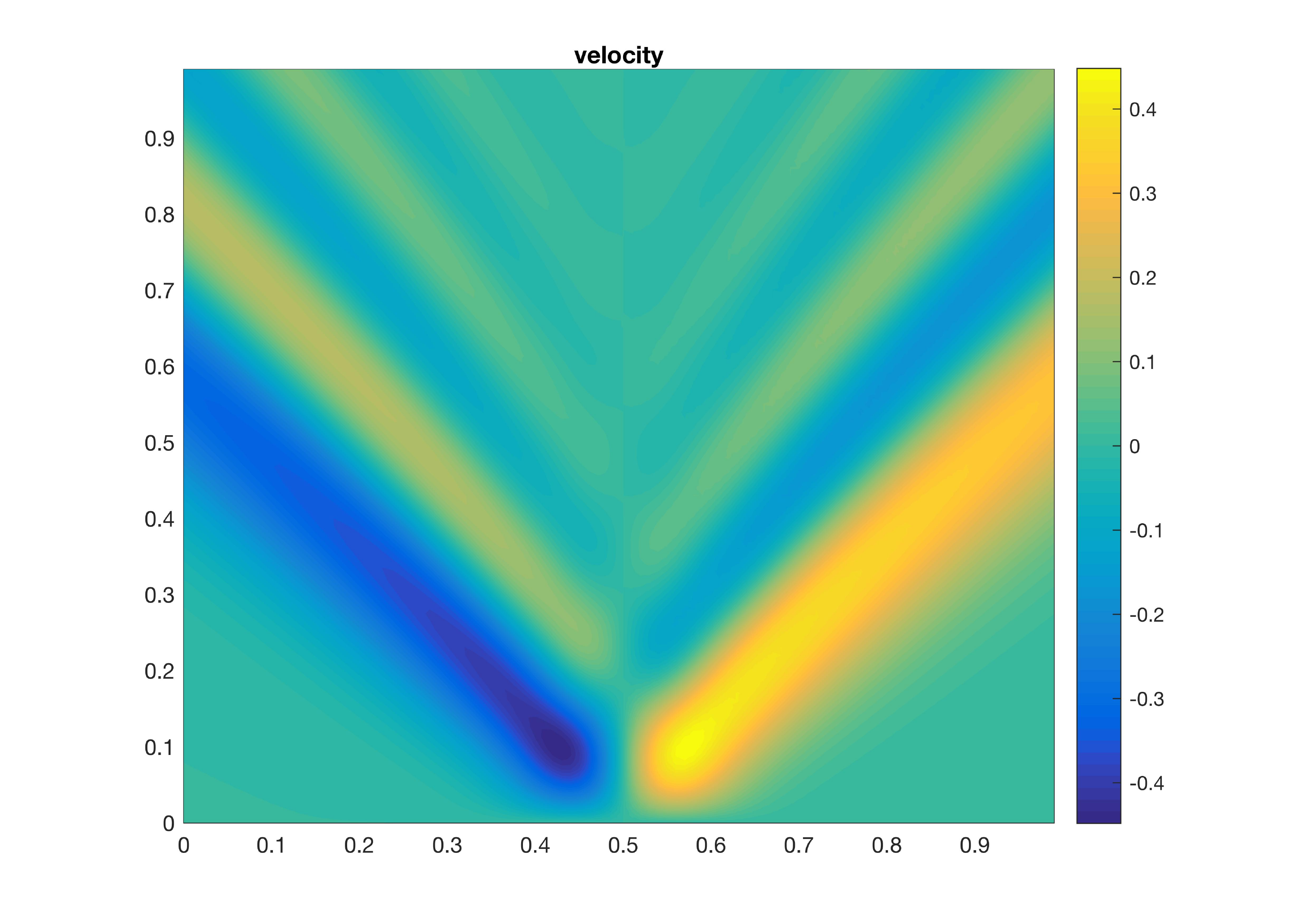}}
 		\end{minipage}
 		\vfill
 		\begin{minipage}[h]{0.49\linewidth}
 			\center{ \includegraphics[width=1\linewidth]{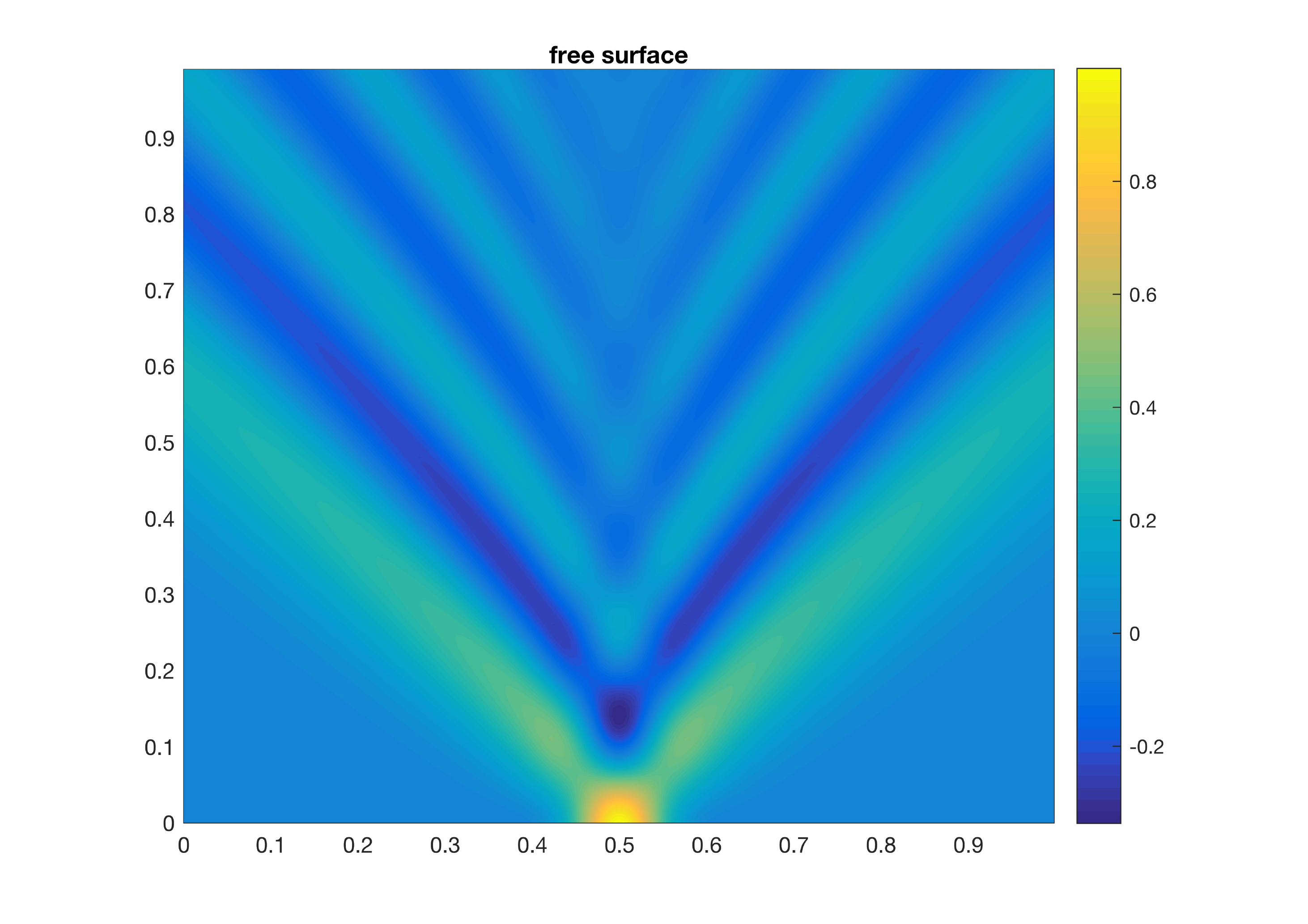} \\ {\it a}}
 		\end{minipage}
 		\hfill
 		\begin{minipage}[h]{0.49\linewidth}
 			\center{\includegraphics[width=1\linewidth]{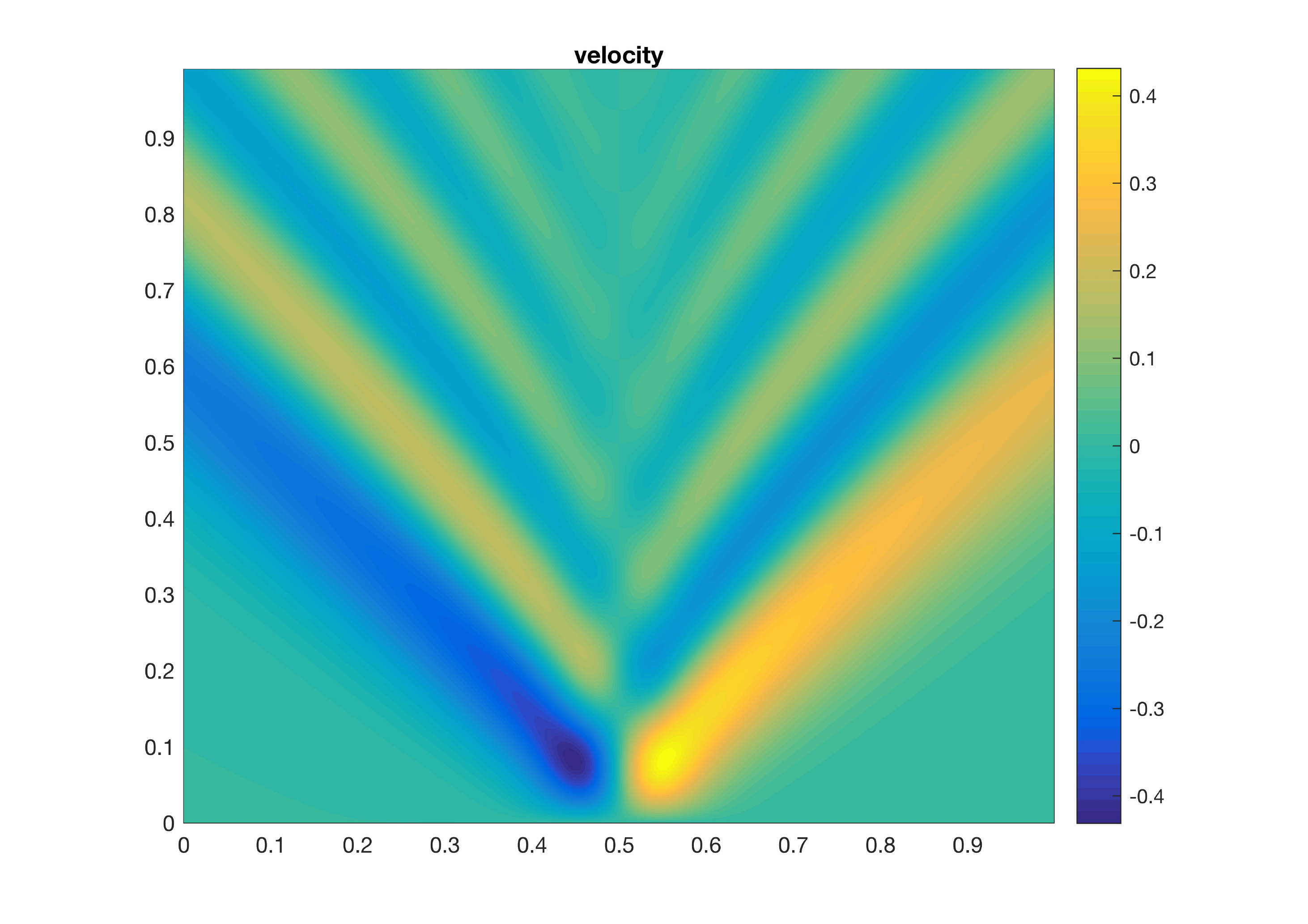} \\ {\it b}}\\
 		\end{minipage}
 	\end{center}
 	\caption{Numerical solution on a staggered(up) and Collocated (down) grids:  Evolution of (a) the surface elevation, (b) the fluid velocity for $\delta x = 10^{-3}$, $\delta t = 10^{-2}$, $\varepsilon = 10^{-3}$}
 	\label{gaus}
 \end{figure}
 
 \subsection{Gaussian initial distribution}
 
In this section, we show numerical results when we take a Gaussian initial distribution for the free surface elevation and zero distribution for velocity
 \begin{equation*}
 	\eta_0(x) = \exp(-400\times (x - 1/2)^2),  \quad w_0(x) = 0,
 \end{equation*}
whereas the computational domain $(t,x) \in [0,1] \times [0,1]$ is meshed with $N \times (J+2)$ nodes. We first show that there is no  reflection on the boundaries of the computational domain. We present results both for staggered and collocated spatial grids. The velocity and free surface evolution are  shown on the $(x,t)$-plane on the Figure, \ref{gaus}. Following the numerical strategy described at the beginning of this section we have reconstructed the value for $w(t = \dt)$ from initial datum for the method on a staggered grid.

Let us comment the results found. Recall that the dispersion relation associated to the \eqref{KN-inisyst} is written as
 \begin{equation}\label{disprel}
	 \omega^2(k) = \frac{k^2}{1 + \varepsilon k^2},
 \end{equation}
 there are two solutions for $\omega(k)$, that corresponds to the fact that Green-Naghdi system describes bi-directional propagation of waves, just as we can see on the Figure, \ref{gaus}. On the left Figure, \ref{PhaseGroup} the positive solution of dispersive relation is plotted, there is more diversity for the values of $\omega(k)$ and value for the same $k$ is more important as $\varepsilon > 0$ increases. Other properties are related to the difference between the group and phase velocities. From dispersive relation \eqref{disprel} we conclude,
  \begin{equation*}
 v_\varphi (k) = \frac{\omega(k)}{k} = \frac{1}{\sqrt{1 + \varepsilon k^2}}, \quad v_g (k) = \frac{d\omega(k)}{dk} = \frac{1}{(1 + \varepsilon k^2)^{3/2}}.
 \end{equation*}
 Group velocity is always less than phase velocity (see right Figure \ref{PhaseGroup}). 
 
 \begin{figure}[t]
 	\begin{center}
 		\begin{minipage}[h]{0.49\linewidth}
 			\center{ \includegraphics[width=1\linewidth]{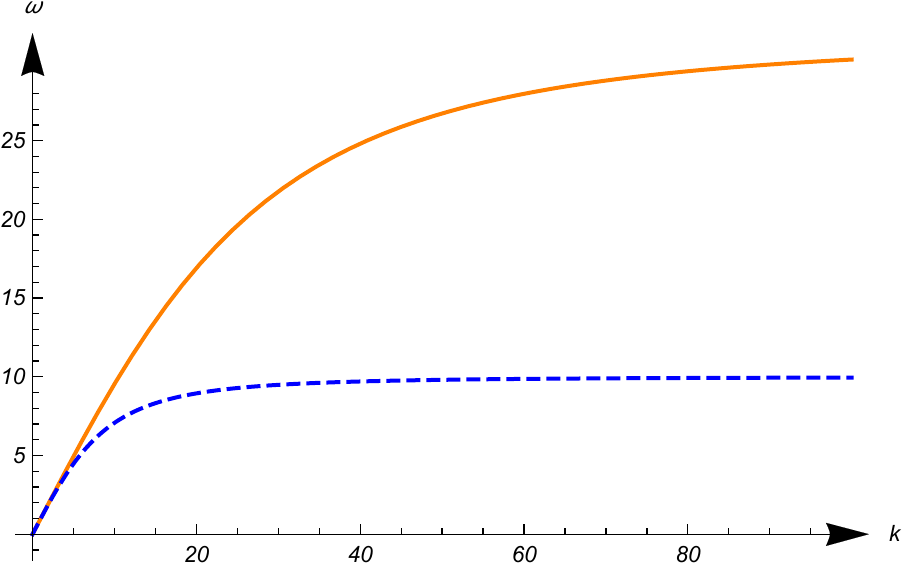}}
 		\end{minipage}
 		\hfill
 		\begin{minipage}[h]{0.49\linewidth}
 			\center{\includegraphics[width=1\linewidth]{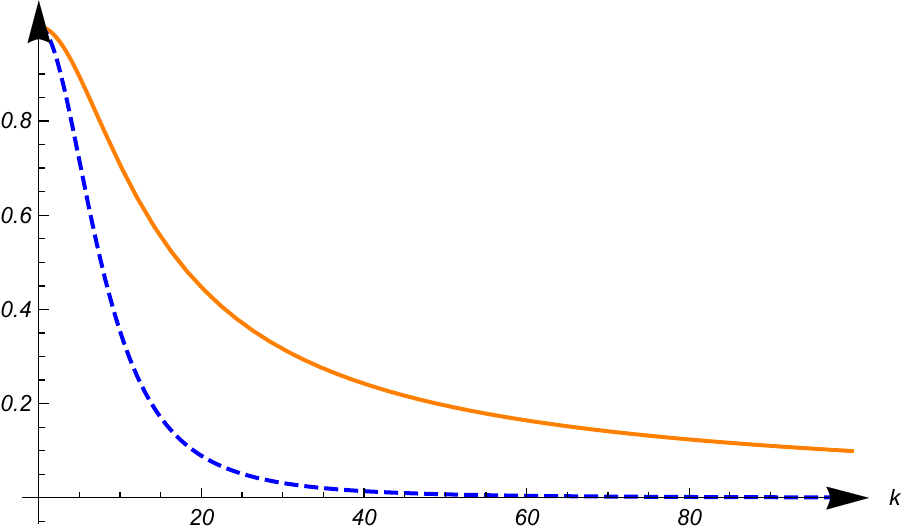}}
 		\end{minipage}
 	\end{center}
 	\caption{Positive solution of dispersive relation $\omega(k)$ for $\varepsilon = 10^{-3}$ (continued), $\varepsilon = 10^{-3}$ (dashed) (left) and phase (continued) and group (dashed) velocities for $\varepsilon = 10^{-3}$ (right).}
 	\label{PhaseGroup}
 \end{figure}
 
 \subsection{Wave packet}

In order to observe more clearly the dispersive behavior of the Green-Naghdi system and demonstrated the applicability of the constructed  boundary conditions for other tests, we consider the solution of \eqref{KN-inisyst} with the next initial datum
\begin{equation}\label{iniWP}
	\eta_0(x) = \exp(-400\times(x - 1/2)^2) \sin(20\pi x),  \quad w_0(x) = 0.
\end{equation}
For the different value of $\varepsilon$,  the dispersive properties are not the same. Results are presented on the figure, \ref{WP}. As dispersive effects are more important for $\varepsilon = 10^{-3}$ we have more diversity for frequency values, but for smaller value $\varepsilon = 10^{-4}$ the behaviour of the solution is closer to the  solutions of the hyperbolic Saint-Venant system. Namely, there exist not a lot of harmonics with different velocities, and then system reaches its equilibrium (in the case of Saint-Venant the solution is just two opposite velocities, without dispersive effects). However we can see the difference of phase and group velocities in the both cases. 

In order to check numerically the order of approximation of the numerical schemes, we have constructed the reference solution for velocity. The fundamental solution of \eqref{KN-oneeq} can be written as
\begin{equation*}
		w_{ref}(t,x) = \FF^{-1} \left(\cos(\frac{\xi t}{\sqrt{1+\xi^2\varepsilon}} \ast \FF\left(w_0(x)\right)\right),
\end{equation*} 
here $\FF$, $\FF^{-1}$ are Fourier and inverse Fourier transform, and $w_0(x)$ initial data. For numerical test, the reference solution is calculated by using Fast Fourier transform and periodic boundary conditions. The extent of the computational domain is chosen large enough to avoid any spurious effects of the boundary conditions. The evolution of reference solution is shown on figure \ref{ref}. 

\begin{figure}[t]
	\begin{center}
		\begin{minipage}[h]{0.45\linewidth}
			\center{\includegraphics[width=1\linewidth]{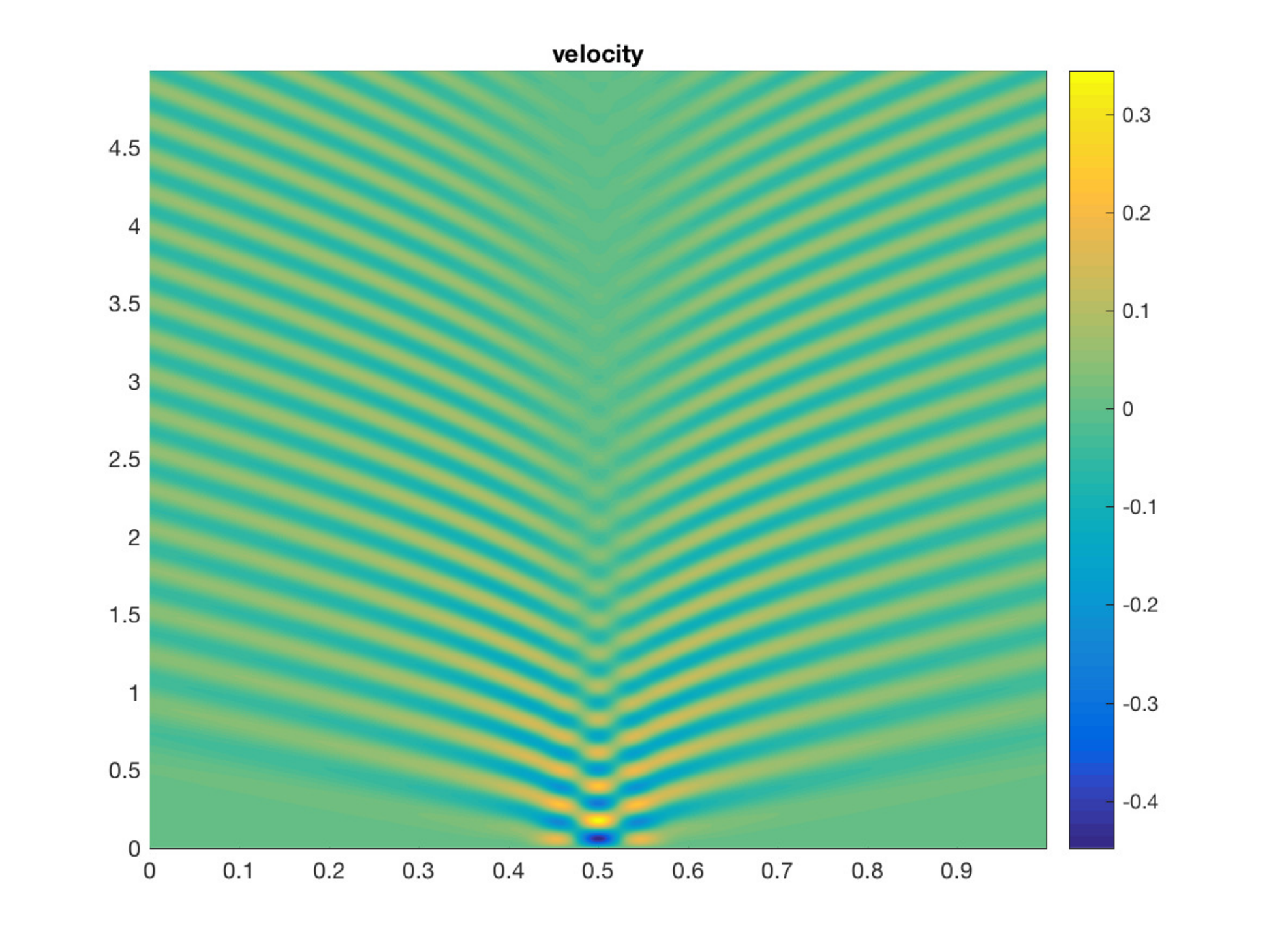}\\ {$\varepsilon = 10^{-3}$}}
		\end{minipage}
		\hfill
		\begin{minipage}[h]{0.45\linewidth}
			\center{ \includegraphics[width=1\linewidth]{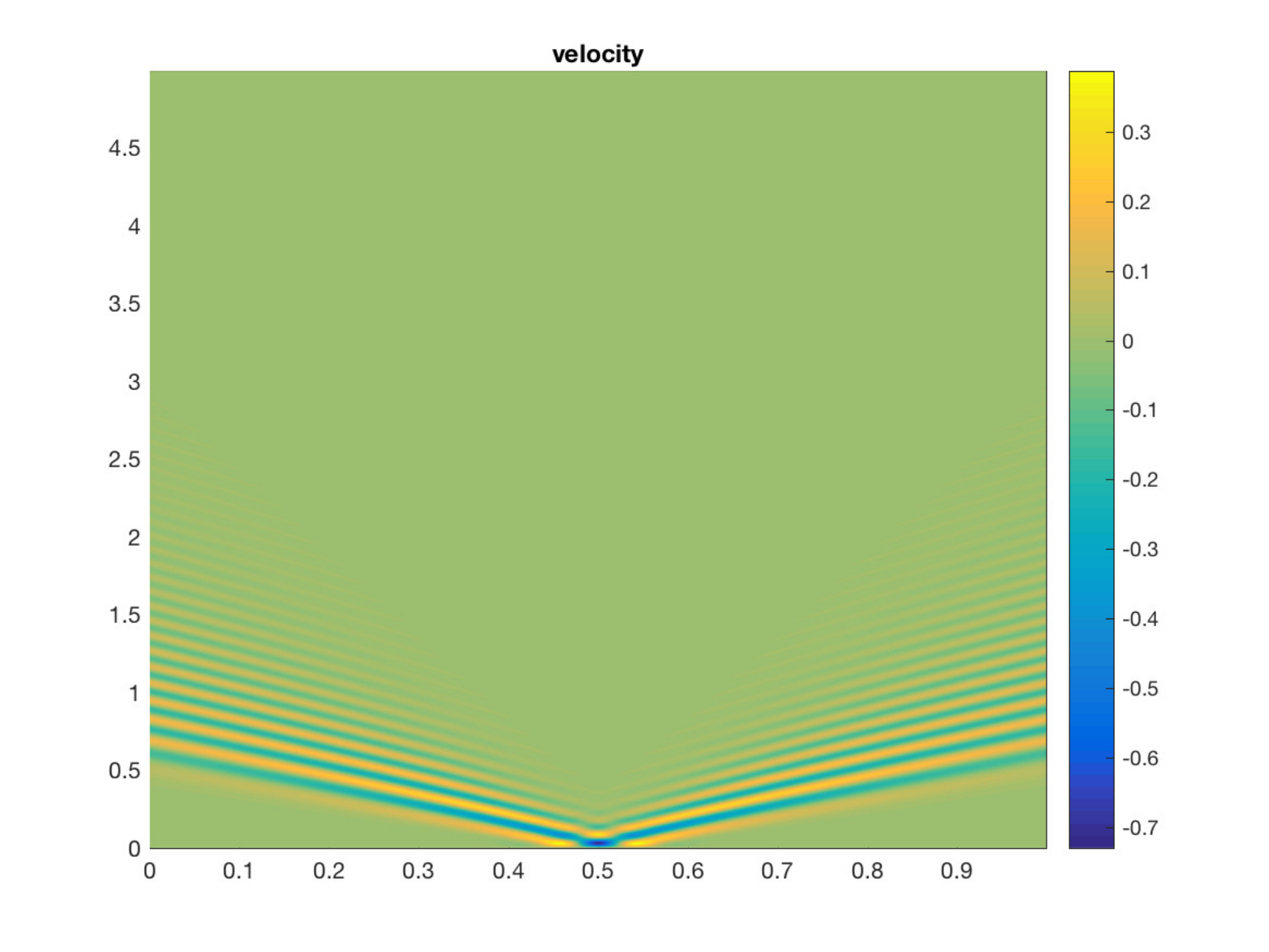}\\ {$\varepsilon = 10^{-4}$}}
		\end{minipage}
	\end{center}
	\caption{Numerical solution on a Staggered(up) and Collocates(down) grids: Evolution velocity profile for $\delta x = 10^{-3}$, $\delta t = 10^{-2}$ with \eqref{iniWP} initial datum.}
	\label{WP}
\end{figure}
We define the error functions of approximation which corresponds to the discrete version of $L^t_\infty L^x_2$ and  $L^t_2 L^x_2$ norms of the errors. Let us first denote
\begin{equation*}
	e_n = \| w(t_n,\cdot) -  w_{ref}(t_n,\cdot) \|_{L_2},
\end{equation*}
for all time step $t_n$, then the discrete norms are defined as follows
\begin{equation*}
	L_2err = \left(\dt \sum_{n = 1}^{N} (e_n^2)\right), \quad L_\infty err = \max\limits_{0< n < N} \big(e_n\big).
\end{equation*}

The next estimations are satisfied due to second order for both numerical scheme on a staggered and collocated grid
\begin{equation*}
L_2err = C^2_t \dt^2 + C^2_x \dx^2, \quad L_\infty err = C^\infty_t \dt^2 + C^\infty_x \dx^2,
\end{equation*}
where $C^2_{t,x}$, $C^\infty_{t,x}$ are universal constant. We start the analysis of the behavior of error functions with respect to $\dx$. For that purpose, we take $N = 10^{3}$ which leads to value for $\dt$ small enough to be sure that the dominating error term is linked to $C_x$ . The errors are plotted on figure \ref{err}. The second order accuracy with respect to space step is satisfied.

In order to check the approximation order with respect to $\dt$, we fix $J = 2^{15}$, to take $\dx$ small enough and be sure that there is no influence of $C^2_x$, $C^\infty_x$. We find the second order of approximation as well. The plots are presented on figure \ref{errdt}.

\begin{figure}[t]
	\begin{center}
		\begin{minipage}[h]{0.49\linewidth}
			\center{ \includegraphics[width=1\linewidth]{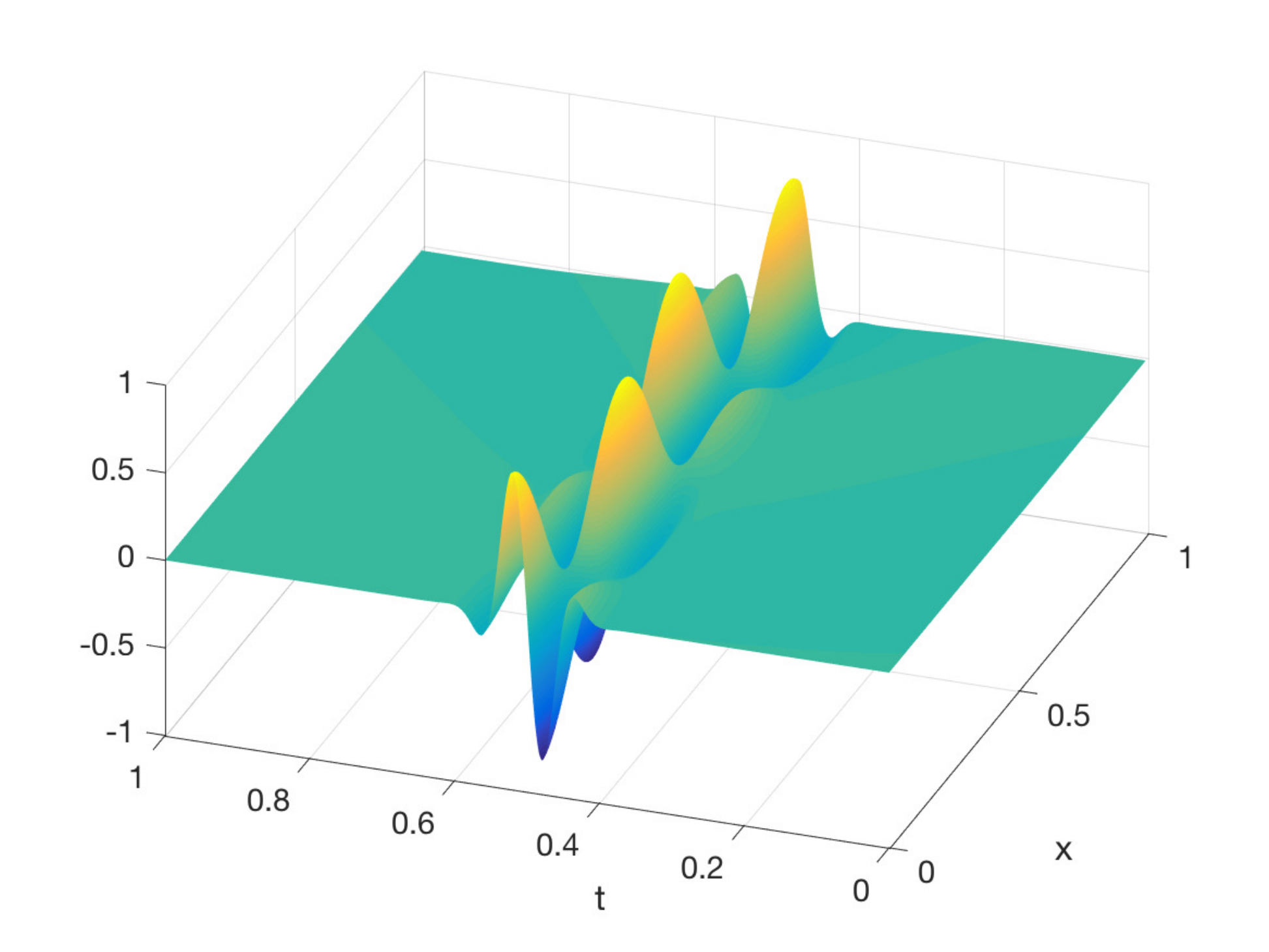}  \\ {$\varepsilon = 10^{-2}$}}
		\end{minipage}
		\hfill
		\begin{minipage}[h]{0.49\linewidth}
			\center{\includegraphics[width=1\linewidth]{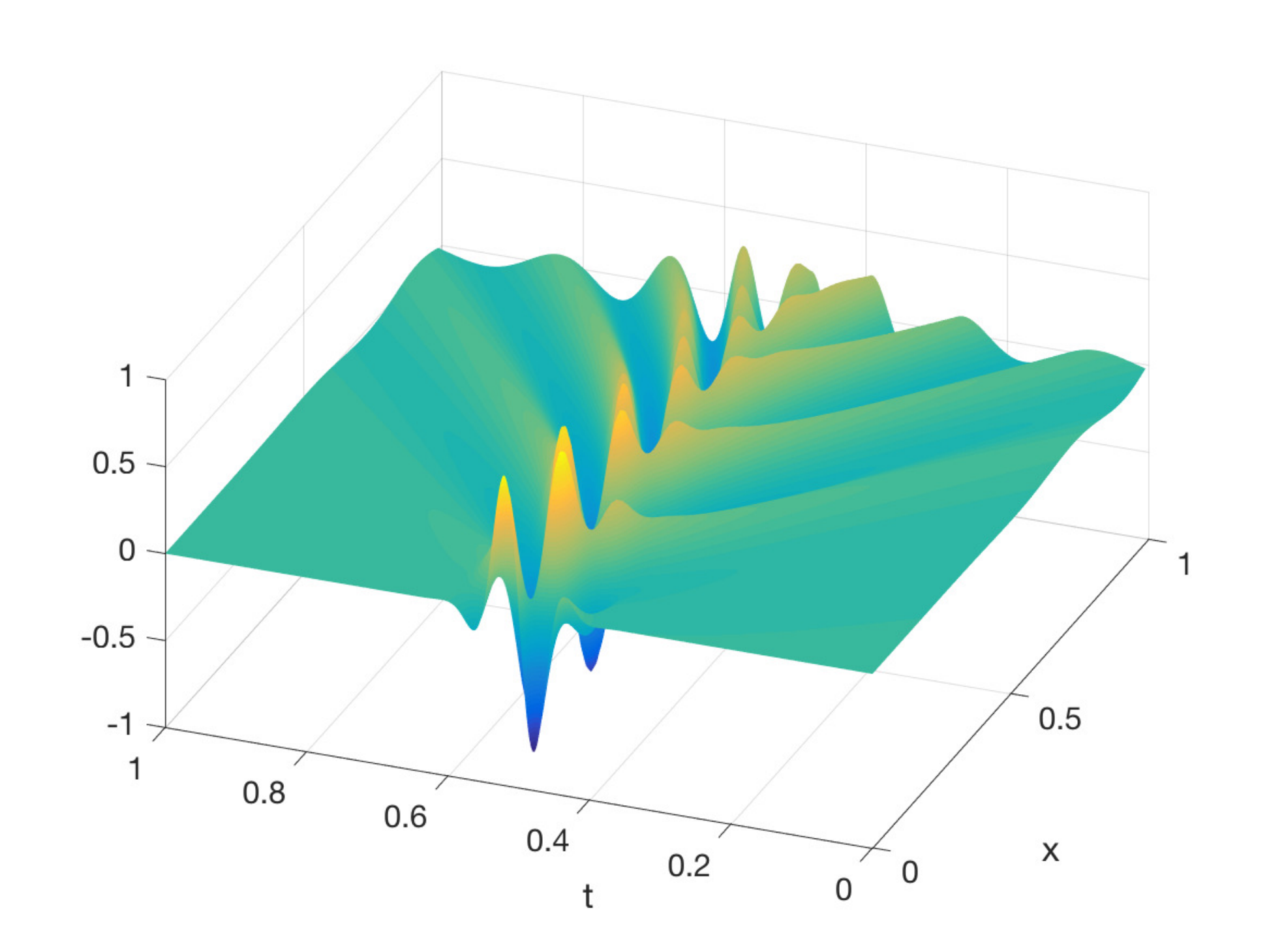}\\ {$\varepsilon = 10^{-3}$}}
		\end{minipage}
	\end{center}
	\caption{Evolution of the reference solution for $\varepsilon = 10^{-2}$ (left) and $\varepsilon = 10^{-3}$ (right).}
	\label{ref}
\end{figure}

\begin{figure}[t]
		\begin{minipage}[h]{0.48\linewidth}
			\center{ \includegraphics[width=1\linewidth]{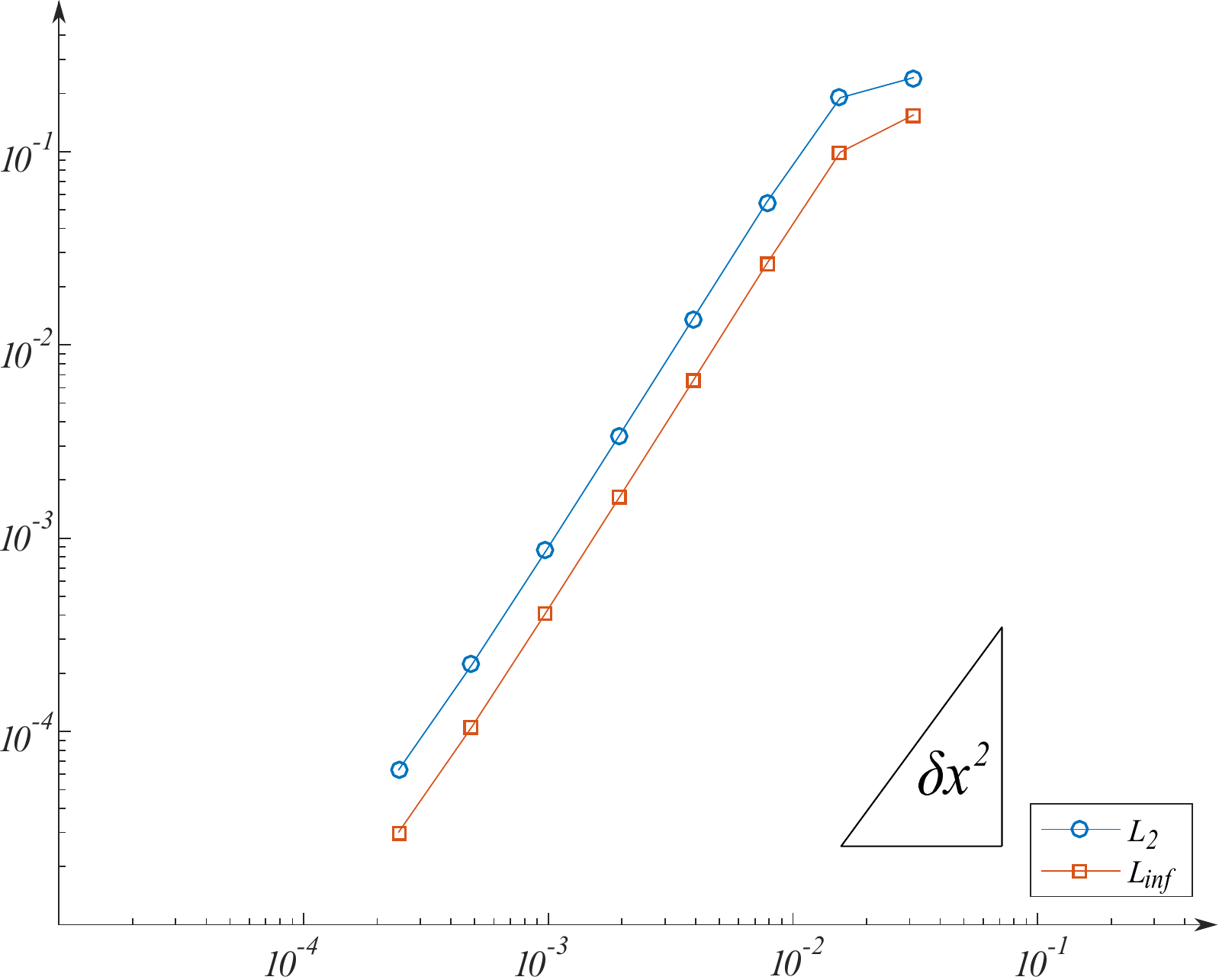}  \\ {$\varepsilon = 10^{-2}$}}
		\end{minipage}
		\hfill
		\begin{minipage}[h]{0.48\linewidth}
			\center{\includegraphics[width=1\linewidth]{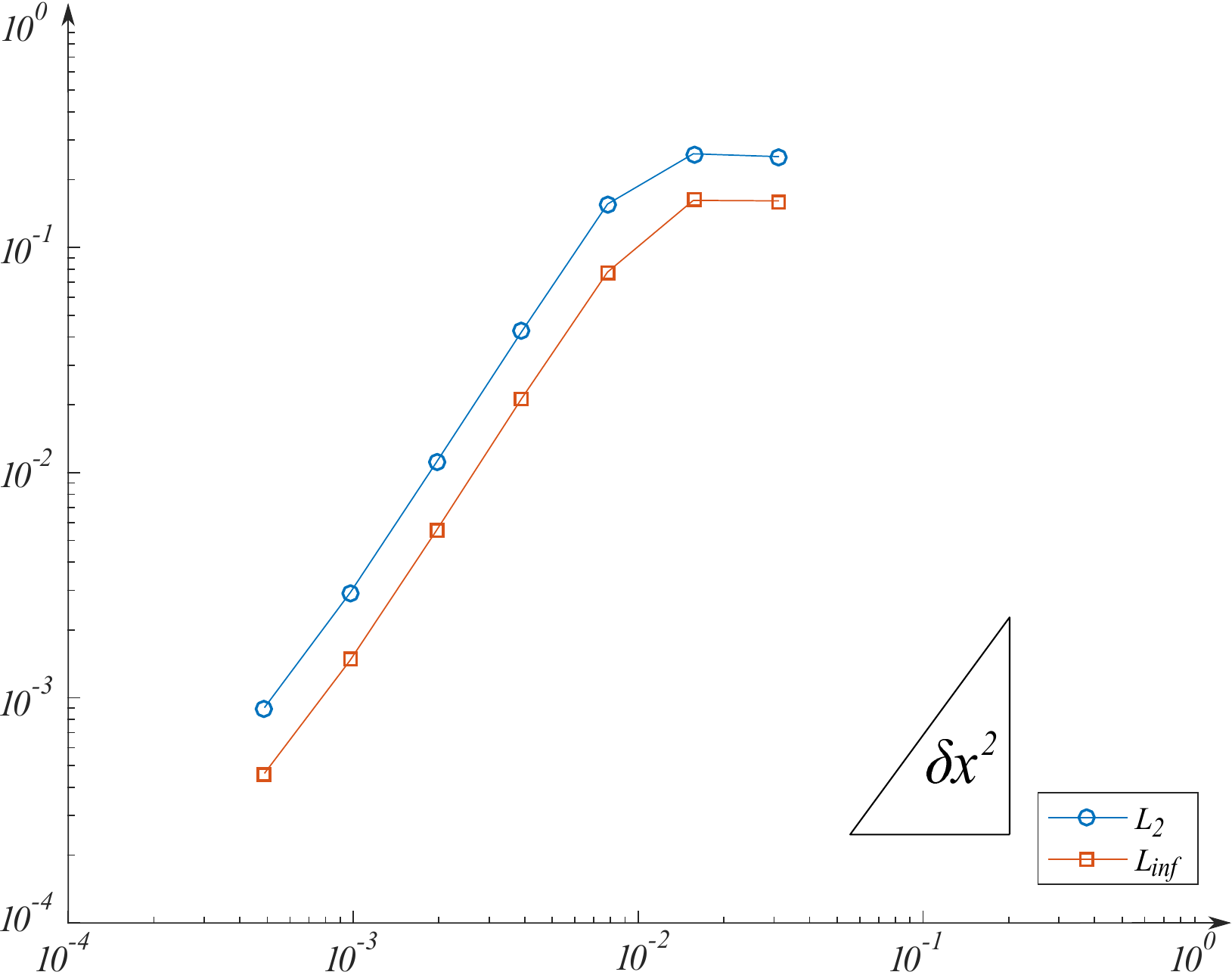}\\ {$\varepsilon = 10^{-3}$}}
		\end{minipage}
	\vfill
	\begin{minipage}[h]{0.48\linewidth}
		\center{ \includegraphics[width=1\linewidth]{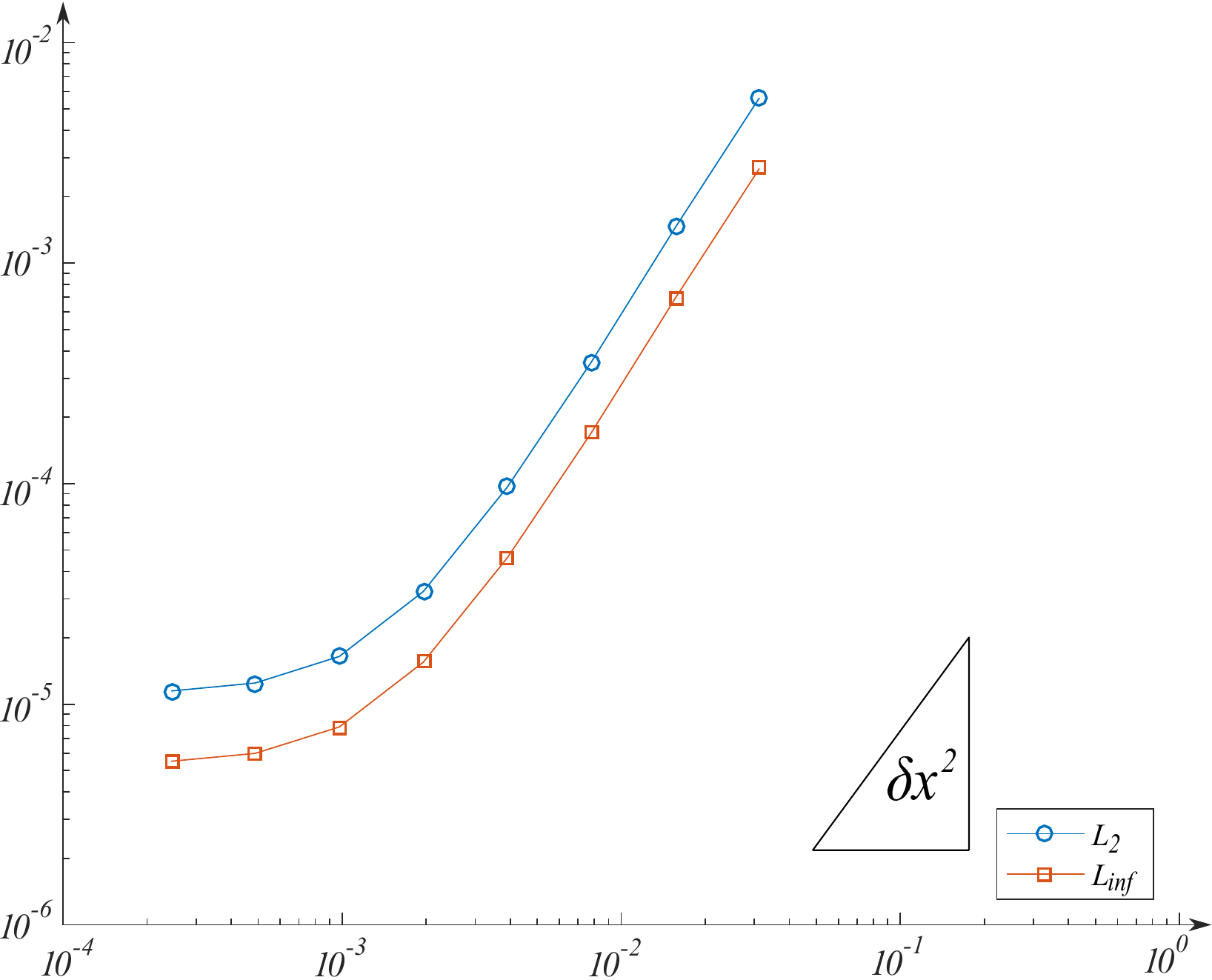}  \\ {$\varepsilon = 10^{-2}$}}
	\end{minipage}
	\hfill
	\begin{minipage}[h]{0.48\linewidth}
		\center{\includegraphics[width=1\linewidth]{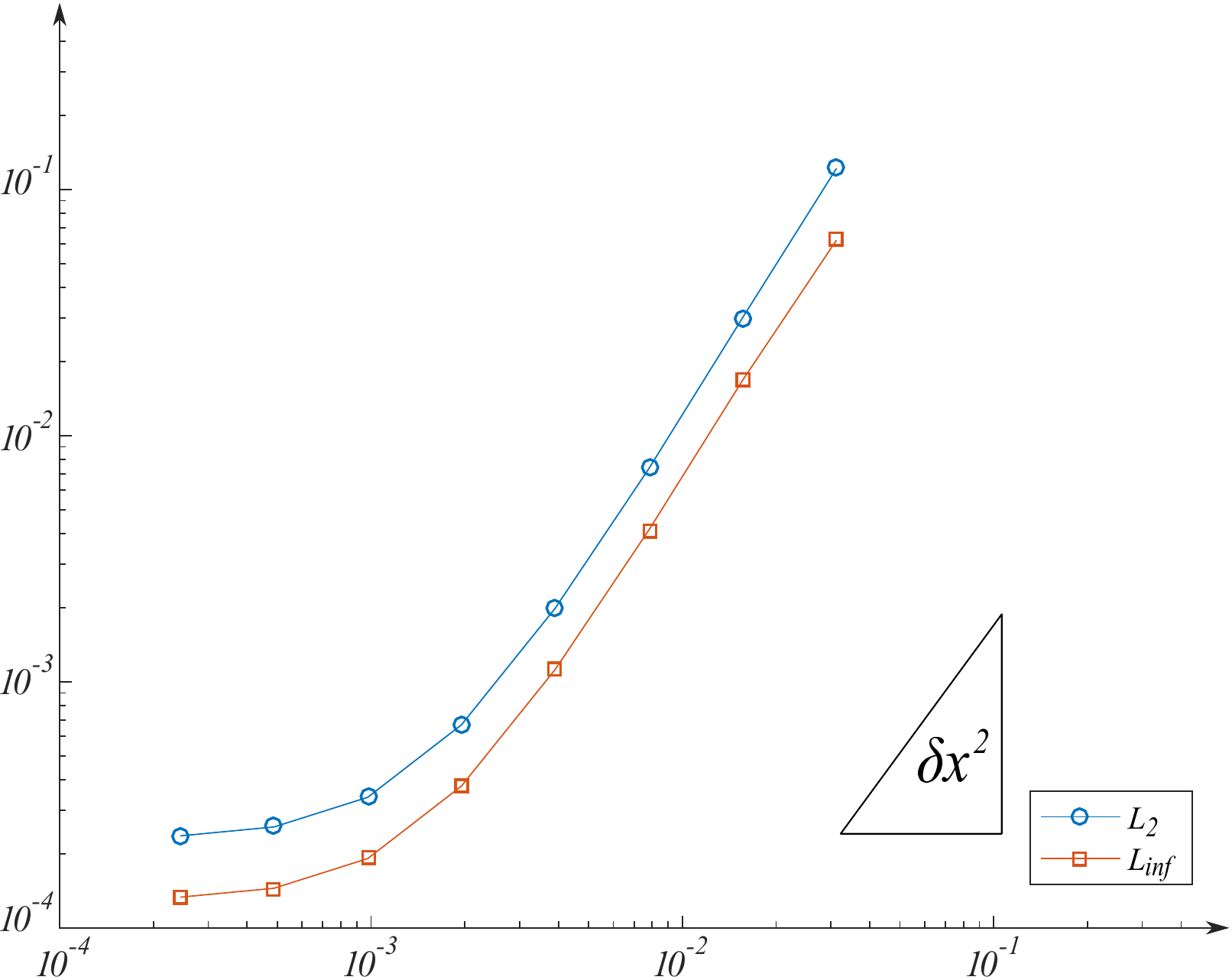}\\ {$\varepsilon = 10^{-3}$}}
	\end{minipage}
	\caption{Evolution of the error functions for numerical methods on a collocated (up) and staggered (down) grid with respect to $\dx$.}
	\label{err}
\end{figure}

\begin{figure}[t]
	\begin{minipage}[h]{0.49\linewidth}
		\center{ \includegraphics[width=1\linewidth]{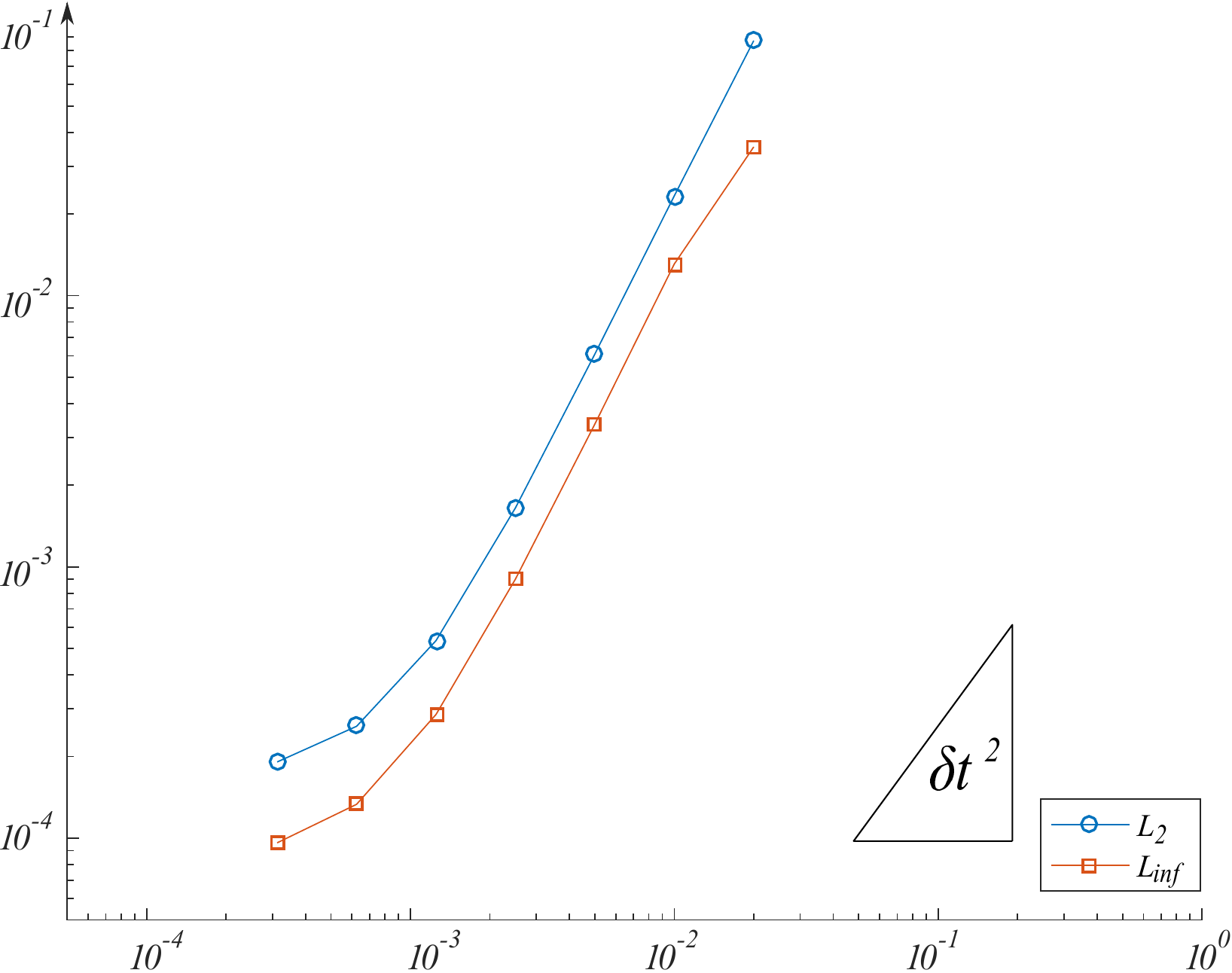}  \\ {$\varepsilon = 10^{-3}$}}
	\end{minipage}
	\hfill
	\begin{minipage}[h]{0.49\linewidth}
		\center{\includegraphics[width=1\linewidth]{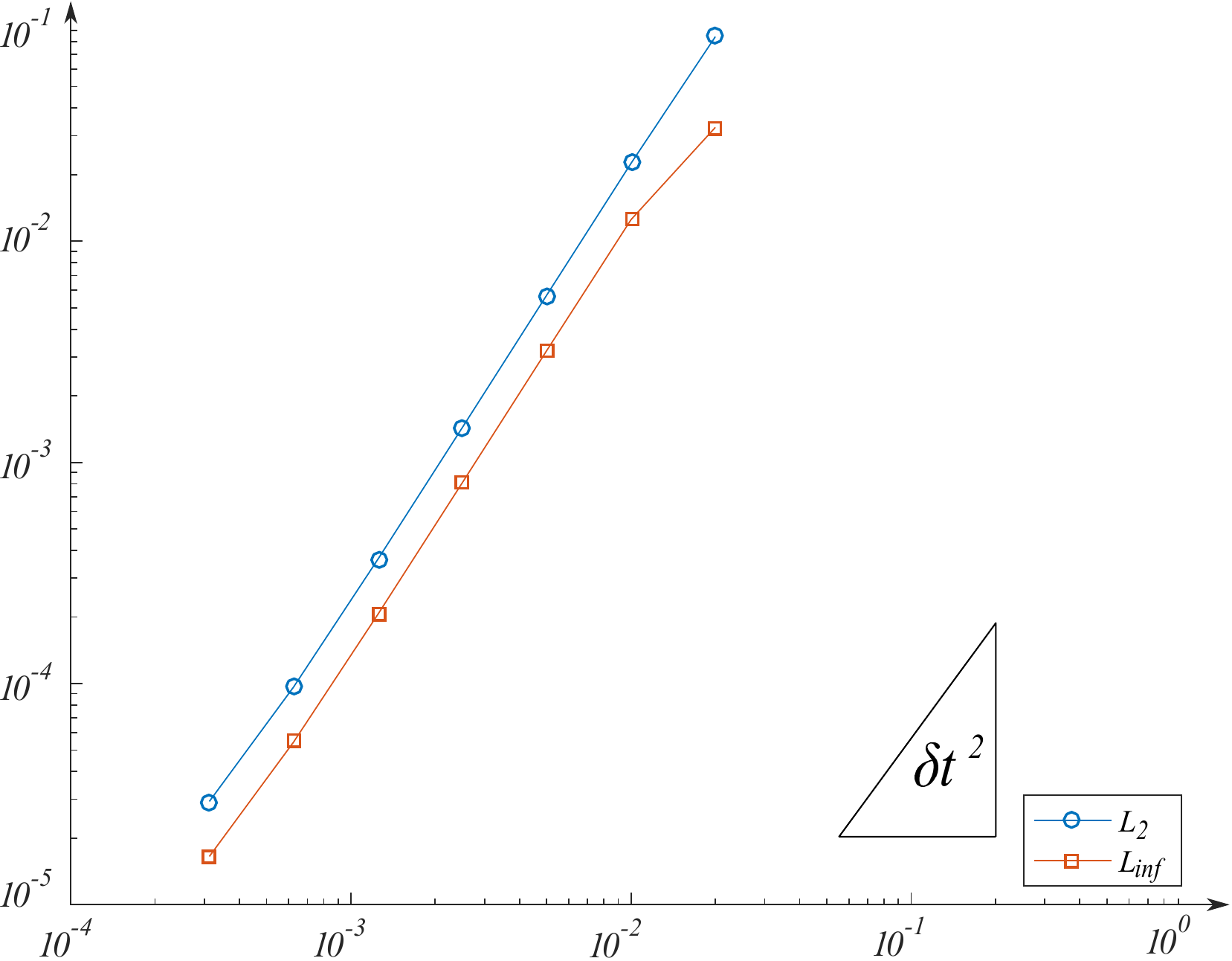}\\ {$\varepsilon = 10^{-3}$}}
	\end{minipage}
	\caption{Evolution of the error functions for numerical methods on a collocated (left) and staggered (right) grid with respect to $\dt$.}
	\label{errdt}
\end{figure}

 \subsection{Incoming wave}
 
 In this subsection we will consider the numerical test with travelling wave coming into the computational domain, which is an important real physical case. We follow here the method presented in \cite{AMP} for the Schr\"{o}dinger-Poisson system and successfully applied in \cite{BMN} for Benjamin-Bona-Mahoney equation. 
 
 Let us denote $w^{in}(x,t)=\beta\cos(kx-\omega(k)t)$ a plane wave solution for the velocity of the linear equation \eqref{KN-oneeq}. Now we are searching for transparent boundary conditions for the linear equation with an initial data $w_0$ satisfying $w_0(x) = w^{in}(x)$, $\forall x \leq x_l$ and $w_0(x) = 0$, $\forall x \geq x_r$. For that purpose, we decompose $w$ as $w(x, t) = \chi(x)w^{in}(x, t) + v(x, t)$, where the cut-off function $\chi$ is defined as $\chi = 1, \forall x \leq x_l$ $\chi = 0, \forall x \geq x_r$,  new unknown function $v$ is compactly supported in $[x_\ell, x_r]$. For $v$ one finds the following equation with a source term:
 \begin{equation*}
 \begin{array}{c}
	(v - v _{xx})_{tt} - v_{xx} = G_\varepsilon(x,t)\\
	G_\varepsilon(x,t) = \varepsilon (\chi''(x) w^{in}_{tt}(t,x) + 2 \chi' w^{in}_{xtt}(t,x) ) + \chi''(x) w^{in}(t,x)  + 2 \chi''(x) w^{in}_{x}(t,x) 
 \end{array}
 \end{equation*}
 The derivation of continuous boundary condition for $v$ is exactly similar to the homogeneous case ($w^{in} = 0$) discussed above, one finds
  \begin{equation}\label{KN-condin}
 \begin{array}{c}
 \dsp  w_x(t, x_r) = -\partial/\partial t \int_{0}^{t} \mathcal{J}_0( s/\sqrt{\varepsilon} ) w(t-s,x_r)ds ,\\[4mm]
 \dsp  \partial_x(w- w^{in})(t, x_l)  =   \partial/\partial t \int_{0}^{t} \mathcal{J}_0( s/\sqrt{\varepsilon} ) (w- w^{in})(t-s,x_l)ds.
 \end{array}
 \end{equation}

 For the discrete boundary condition the construction procedure repeats the method proposed above as well. The continuous plane wave solution is replaced by the discrete solution
 \begin{equation*}
		\dsp w^{in}_{n,j} = \beta \cos(j k \dx -  n \tilde{\omega}(k) \dt), \quad \tilde{\omega}(k) = \frac{1}{\dt} \arccos\left( \frac{2 \dx^2 + (4\varepsilon - \dt^2) \sin^2(k\dx/2)}{2 \dx^2 + (4\varepsilon + \dt^2) \sin^2(k\dx/2)}\right),
 \end{equation*}
 and condition on the left is written as 
 \begin{multline}\label{KN-leftin}
 \Lambda (w_1^{n+1} - [w^{in}]_1^{n+1} ) - (\Lambda + \dx^2 + 2 \dx \sqrt{\Gamma})(w_0^{n+1} - [w^{in}]_0^{n+1})  = \\[3mm]  2(\mu (w_1^{n} - [w^{in}]_1^{n}) - (\mu + 2 \dx^2 + \dx \sqrt{\Gamma} (v + 1))(w_0^{n} - [w^{in}]_0^{n})) - \\[3mm]  (\Lambda +(w_1^{n-1} - [w^{in}]_1^{n-1} ) - (\Lambda + \dx^2 + 2 \dx \sqrt{\Gamma})(w_0^{n-1} - [w^{in}]_0^{n-1})) +\\[3mm]
 2 \dx\sqrt{\Gamma} \left( (\PP_2 - 2 v^2 + v)  (w_0^{n-1} - [w^{in}]_0^{n-1}) + \sum_{k =2}^{n} s_k  (w_0^{n-k} - [w^{in}]_0^{n-k}) \right),
 \end{multline}
and on the right,
 \begin{multline}\label{KN-rightin}
 \Lambda (w_{J+1}^{n+1} - [w^{in}]_{J+1}^{n+1} ) - (\Lambda + \dx^2 - 2 \dx \sqrt{\Gamma}) (w_{J}^{n+1} - [w^{in}]_{J}^{n+1}) = \\[3mm]  2(\mu (w_{J+1}^{n} - [w^{in}]_{J+1}^{n}) - (\mu + 2 \dx^2 - \dx \sqrt{\Gamma} (v + 1))(w_J^{n} - [w^{in}]_{J}^{n}) - \\[3mm] 
  - (\Lambda (w_{J+1}^{n-1} - [w^{in}]_{J+1}^{n-1}) - (\Lambda + \dx^2 - 2 \dx \sqrt{\Gamma})(w_J^{n-1} - [w^{in}]_{J}^{n-1}) - \\[3mm] 
- 2 \dx\sqrt{\Gamma} \left( (\PP_2 - 2 v^2 + v)  (w_J^{n-1} - [w^{in}]_{J}^{n-1}) + \sum_{k =2}^{n} s_k(v)  (w_J^{n-k} - [w^{in}]_{J}^{n-k}) \right).
 \end{multline}
 Conditions for the system \eqref{KN-CrNicDisCol} can be written in the same manner. 

The numericals results are presented on the Figure,  \ref{inc}. We put wave number $k = 2 \pi p$, $p \in N$. And we presented the results for different wave number ($p = 4, 8$). In both case there exist a transient regime, but after the wave solution propagates correctly. We observe again the difference between phase and group velocities. Note that the characteristics in the $(x,t)$ plane have all a slope close to 1 in the zone after transition, which corresponds to the velocity of the waves (a coefficient preceding  $w_x$). But the part of energy is carried along the characteristic with the smaller slope on the border of the transient regime. Which corresponds to the fact that group velocity is smaller.

\begin{figure}[t]
	\begin{minipage}[h]{0.49\linewidth}
		\center{ \includegraphics[width=1\linewidth]{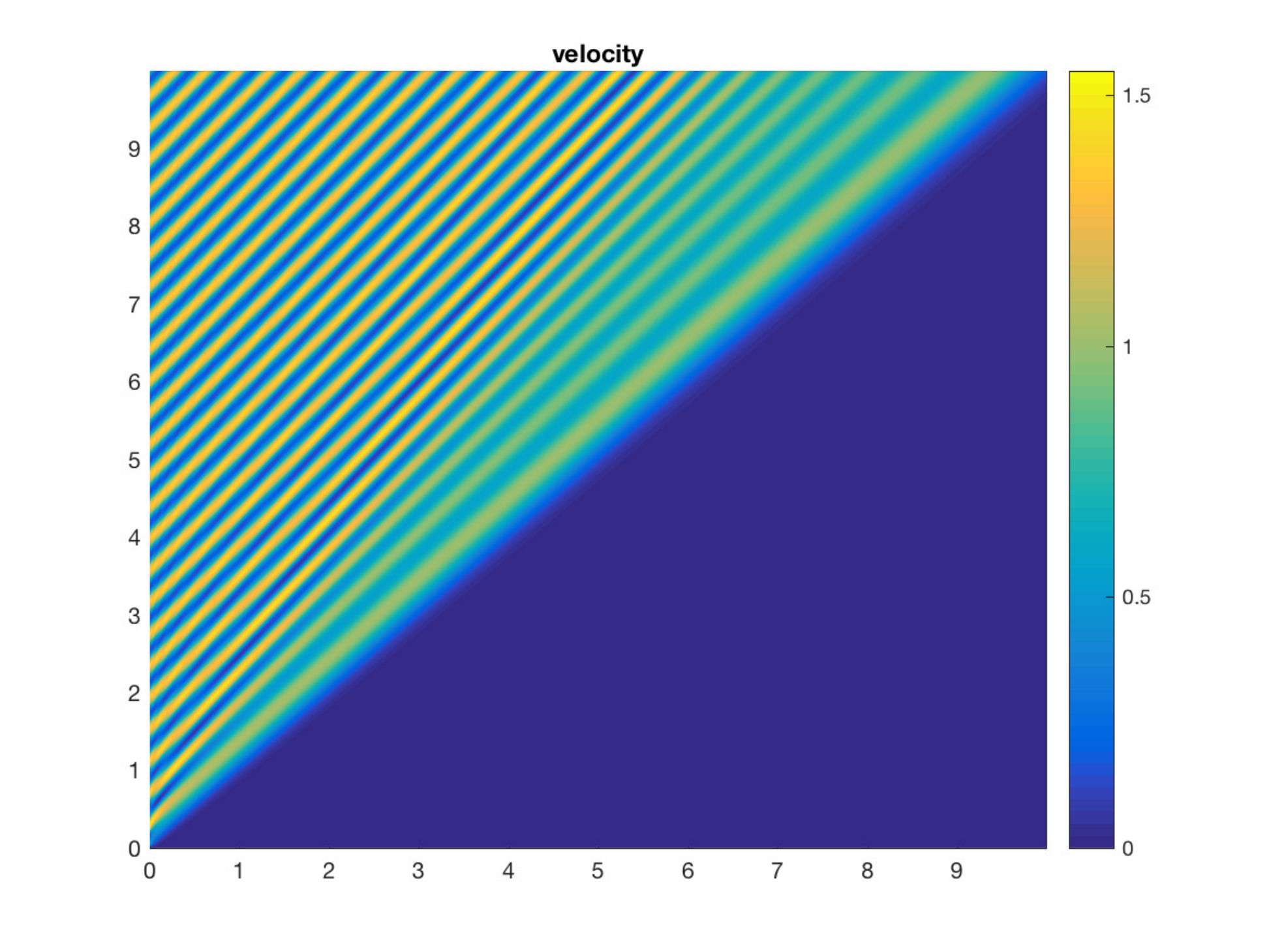}  \\ {$p = 4, \varepsilon = 10^{-3}$}}
	\end{minipage}
	\hfill
	\begin{minipage}[h]{0.49\linewidth}
		\center{\includegraphics[width=1\linewidth]{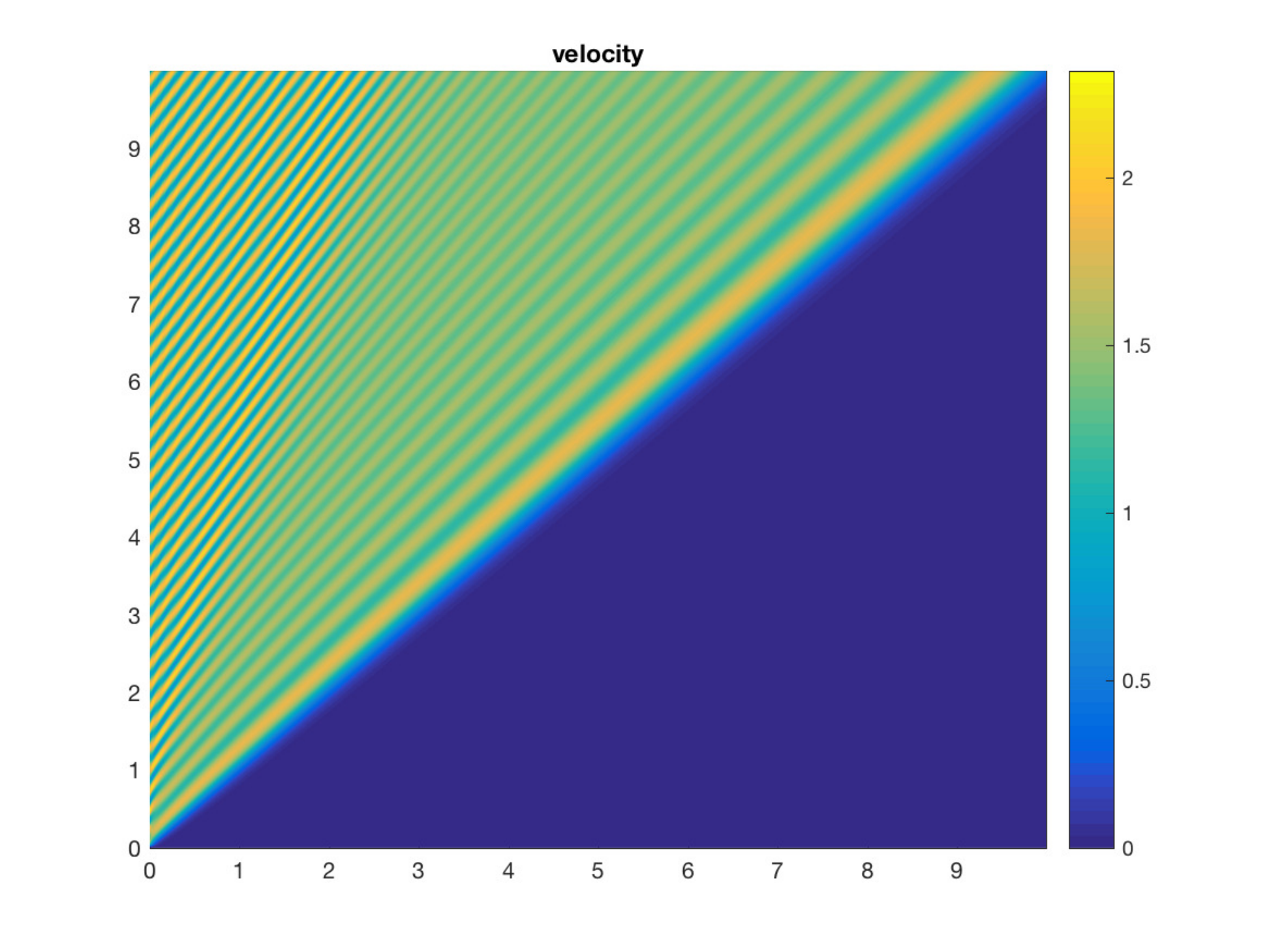}\\ {$p = 8, \varepsilon = 10^{-3}$}}
	\end{minipage}
	\caption{Evolution of incoming wave solution for different wave number.}
	\label{inc}
\end{figure}

 \section{Conclusion}

In this paper, we derived exact and discrete transparent boundary conditions for the linear Green-Naghdi
system for a Crank Nicolson discretization on a staggered and collocated grid.  Both schemes are proved to be stable, consistent and convergent. The technique is validated numerically as well for outgoing wave with the different initial data. We show how to deal with the problem of wave generation in water wave problems and prove accuracy of the proposed method on the numeric test. 

In practice, we will have to deal with non-linear equations. It remains an open question what are the transparent boundary conditions for this case? One can imagines to adapt our strategy to linear equations with variable coefficients and then adopt a fixed point strategy, as it was done for nonlinear Schrodinger equations in \cite{AABES}. An other question of interest is to derive discrete transparent boundary conditions in the case of the two-layer Green-Naghdi equations which are used to describe an internal wave propagation.

\end{document}